\newtheorem{thm}{Theorem}[section]
\newtheorem{cor}[thm]{Corollary}
\newtheorem{lem}[thm]{Lemma}
\newtheorem{prop}[thm]{Proposition}
\newtheorem{exa}[thm]{Example}
\theoremstyle{definition}
\newtheorem{defn}[thm]{Definition}
\theoremstyle{remark}
\newtheorem{rem}[thm]{Remark}
\numberwithin{equation}{section}
\newcommand{\Real}{\mathbb{R}}
\newcommand{\Complex}{\mathbb{C}}
\newcommand{\Nat}{\mathbb{N}}
\newcommand{\Rat}{\mathbb{Q}}
\newcommand{\Z}{\mathbb{Z}}
\newcommand{\Zplus}{\Z^{+}}
\newcommand{\Zmius}{\Z^{-}}
\newcommand{\Zpm}{\Z^{\pm}}
\begin{document}

\title[ Kac-Moody Lie algebras graded by .... ]
      { Kac-Moody Lie algebras graded by Kac-Moody root systems}

\author{Hechmi Ben Messaoud, Guy Rousseau}

\address{D\'epartement de Math\'ematiques,  Universit\'e de Monastir, Facult\'e des
 Sciences,  
 5019 Monastir. Tunisie.}
 \address{ Institut Elie Cartan, Universit\'e de Lorraine, BP 70239, 
  54506 Vandoeuvre L\`es-Nancy Cedex, France. }

\email{hechmi.benmessaoud@fsm.rnu.tn; Guy.Rousseau@univ-lorraine.fr}

\begin{abstract}  We look to gradations of Kac-Moody Lie algebras by Kac-Moody root systems with finite dimensional weight spaces. We  extend, to general Kac-Moody Lie algebras, the notion of $C-$admissible pair as 
introduced by H. Rubenthaler   and J. Nervi   for semi-simple and affine Lie algebras. If 
$\mathfrak{g}$ is a Kac-Moody Lie algebra (with Dynkin diagram indexed by $I$) and $(I,J)$ is  such a $C-$admissible pair, we construct a $C-$admissible subalgebra 
 $\mathfrak{g}^J$, which is a Kac-Moody Lie algebra of the same type as $\mathfrak{g}$, and  whose root system $\Sigma$ grades finitely  the Lie algebra $\mathfrak{g}$. 
 For an admissible quotient $\rho:I\to\overline I$ we build also a Kac-Moody subalgebra $\mathfrak{g}^\rho$ which grades finitely the Lie algebra $\mathfrak g$. 
 If $\mathfrak{g}$ is affine or hyperbolic, we prove that the classification of the gradations of $\mathfrak{g}$  is equivalent  to those of  the $C-$admissible pairs and of the admissible quotients.  
 For general Kac-Moody Lie algebras of indefinite type,  
 the situation may be more complicated; it is (less precisely) described by the  concept of 
 generalized $C-$admissible pairs.
  
\end{abstract}

\maketitle
\noindent
\textit{2000 Mathematics Subject Classification.} { 17B67.}

\medskip\noindent
\textit{Key words and phrases.} {\sl{ Kac-Moody algebra,  $C-$admissible pair, gradation.}}\\
\medskip
\section*{ }
{\textbf{Introduction.}} 
The notion of gradation of a Lie algebra $\mathfrak g$ by a finite root system $\Sigma$ was introduced by S. Berman and R. Moody \cite{bmo} and further studied by G. Benkart and E. Zelmanov \cite{bz}, E. Neher \cite{neh}, B. Allison, G. Benkart and Y. Gao \cite{abg} and J. Nervi \cite{nerv1}.
  This notion was extended by J. Nervi \cite{nerv2} to the case where $\mathfrak g$ is an affine Kac-Moody algebra and $\Sigma$ the (infinite) root system of an affine Kac-Moody algebra; in her two articles she uses the notion of $C-$admissible subalgebra associated to a $C-$admissible pair for the Dynkin diagram, as introduced by H. Rubenthaler \cite{rub}.
  
\par We consider here a general Kac-Moody algebra $\mathfrak g$ (indecomposable and symmetrizable) and the root system $\Sigma$ of a Kac-Moody algebra.
  We say that $\mathfrak g$ is finitely $\Sigma-$graded if $\mathfrak g$ contains a Kac-Moody subalgebra $\mathfrak m$ (the grading subalgebra) whose root system relatively to a Cartan subalgebra $\mathfrak a$ of $\mathfrak m$ is $\Sigma$ and moreover the action of $ad(\mathfrak a)$ on $\mathfrak g$ is diagonalizable with weights in $\Sigma\cup\{0\}$ and finite dimensional weight spaces, see Definition \ref{DefGrad}.
  The finite dimensionality of weight spaces is a new condition, it was fulfilled by the non trivial examples of J. Nervi \cite{nerv2} but it excludes the gradings of infinite dimensional Kac-Moody algebras by finite root systems as in \cite{bz}.
  Many examples of these gradations are provided by the almost split real forms of $\mathfrak g$, cf. \ref{ExGrad}.
  We are interested in describing the possible gradations of a given Kac-Moody algebra (as in \cite{nerv1}, \cite{nerv2}), not in determining all the Lie algebras graded by a given root system $\Sigma$ (as e.g. in \cite{abg} for $\Sigma$ finite).
  
  \par Let $I$ be the index set of the Dynkin diagram of $\mathfrak g$, we generalize the notion of $C-$admissible pair $(I,J)$ as introduced by H. Rubenthaler   \cite{rub}  and J. Nervi \cite{nerv1}, \cite{nerv2}, cf. Definition \ref{defadm}. For each Dynkin diagram $I$ the classification of the $C-$admissible pairs $(I,J)$ is easy to deduce from the list of irreducible $C-$admissible pairs due to these authors.
  We are able then to generalize in section \ref{s2} their construction of a $C-$admissible subalgebra (associated to a $C-$admissible pair) which grades finitely $\mathfrak g$:
\medskip
\\
\textbf{Theorem 1.} (cf. \ref{AJhJ}, \ref{gJ}, \ref{jgrad}) \textit{
Let $\mathfrak g$ be an indecomposable and symmetrizable Kac-Moody algebra, associated to a generalized Cartan matrix $A=(a_{i,j})_{i,j\in I}$.
 Let $J\subset I$ be a subset of finite type such that the pair $(I,J)$ is 
 $C-$admissible. There is a generalized Cartan matrix $A^J=(a'_{k,l})_{k,l\in I'}$ with index set $I'=I\setminus J$ and a Kac-Moody subalgebra $\mathfrak{g}^J$ of $\mathfrak g$ associated to $A^J$, with root system $\Delta^J$.
 Then $\mathfrak g$  is finitely $\Delta^J-$graded with grading subalgebra $\mathfrak{g}^J$.}

\par For a general finite gradation of $\mathfrak g$ with grading subalgebra $\mathfrak m$, we prove (in section \ref{GenGrad}) that $\mathfrak m$ is also symmetrizable and the restriction to $\mathfrak m$ of the invariant bilinear form of $\mathfrak g$ is nondegenerate (Corollary \ref{Msym}).
 The Kac-Moody algebras $\mathfrak g$ and $\mathfrak m$ have the same type: finite, affine or indefinite; the first two types correspond to the cases already studied e.g. by J. Nervi.
 Moreover if $\mathfrak g$ is indefinite Lorentzian or hyperbolic, then so is $\mathfrak m$ (propositions \ref{st} and \ref{sth}).
  We get also the following precise structure result for this general situation:
\medskip 
\\
\textbf{Theorem 2.} {\textit{
Let $\mathfrak g$ be an indecomposable and symmetrizable Kac-Moody algebra, finitely graded by a root system $\Sigma$ of Kac-Moody type with grading subalgebra $\mathfrak m$.
\\
1) We may choose the Cartan subalgebras $\mathfrak a$ of $\mathfrak m$, $\mathfrak h$ of $\mathfrak g$ such that $\mathfrak a\subset \mathfrak h$. 
Then there is a surjective map $\rho_a:\Delta\cup\{0\}\to\Sigma\cup\{0\}$ between the corresponding root systems.
  We may choose the bases $\Pi_a=\{\gamma_s\mid s\in\overline I\}\subset\Sigma$ and $\Pi=\{\alpha_i\mid i\in I\}\subset\Delta$ of these root systems such that
   $\rho_a(\Delta^+)\subset\Sigma^+\cup\{0\}$ and 
   $\{\alpha\in\Delta\mid\rho_a(\alpha)=0\}=\Delta_J:=\Delta\cap(\sum_{j\in J}\,\Z\alpha_j)$
    for some subset $J\subset I$ of finite type.
    \\
2) Let $I'_{re}=\{i\in I\mid \rho_a(\alpha_i)\in\Pi_a\}$, $I'_{im}=\{i\in I\mid \rho_a(\alpha_i)\not\in\Pi_a\cup\{0\} \}$.
Then $J=\{i\in I\mid \rho_a(\alpha_i)=0\}$.
We note $I_{re}$ (resp. $J^{\circ}$)
the union of the connected components of $I\setminus I'_{im}=I'_{re}\cup J$ meeting $I'_{re}$ (resp. contained in $J$), and $J_{re}=J\cap I_{re}$.
Then the pair $(I_{re},J_{re})$ is $C-$admissible. 
\\
3) There is a Kac-Moody subalgebra $\mathfrak g(I_{re})$ of $\mathfrak g$, associated to $I_{re}$, which contains $\mathfrak m$.
This Lie algebra is finitely $\Delta(I_{re})^{J_{re}}-$graded, with grading subalgebra $\mathfrak g(I_{re})^{J_{re}}$.
Both algebras $\mathfrak g(I_{re})$ and $\mathfrak g(I_{re})^{J_{re}}$ are finitely $\Sigma-$graded  with grading subalgebra $\mathfrak m$. 
}}

\par It may happen that $I'_{im}$ is non empty, we then say that $(I,J)$ is a generalized $C-$admissible pair. We give and explain precisely an example in section \ref{s5}.

\par When $I'_{im}$ is empty, $I_{re}=I$, $J_{re}=J$, $\mathfrak g(I_{re})=\mathfrak g$, $(I,J)=(I_{re},J_{re})$ is a $C-$admissible pair and the situation  looks much like the one described by J. Nervi in the finite \cite{nerv1} or affine  \cite{nerv2} cases.
Actually we prove that this is always true when $\mathfrak g$ is of finite type, affine or hyperbolic (Proposition \ref{imempty}).
 In this "empty" case we get the gradation of $\mathfrak g$ with two levels: $\mathfrak g$ is finitely $\Delta^J-$graded with grading subalgebra $\mathfrak g^J$ as in Theorem 1 
 and $\mathfrak g^J$ is finitely $\Sigma-$graded with grading subalgebra $\mathfrak m$. 
 But the gradation of $\mathfrak g^J$ by $\Sigma$ and $\mathfrak m$ is such that the corresponding set "$J$" described as in Theorem 2 
 is empty; we say (following \cite{nerv1}, \cite{nerv2}) that it is a maximal gradation, cf. Definition \ref{GenMaxGrad} and Proposition \ref{imempty2}.

  \par To get a complete description of the case $I'_{im}$ empty, it remains to describe the maximal gradations; this is done in section \ref{s4}.
  We prove in Proposition \ref{4.1} that a maximal gradation $(\mathfrak{g},\Sigma,\mathfrak{m})$ is entirely described by a quotient map $\rho: I\to\overline I$ which is admissible i.e. satisfies two simple conditions (MG1) and (MG2) with respect to the generalized Cartan matrix $A=(a_{i,j})_{i,j\in I}$.
  Conversely for any admissible quotient map $\rho$, it is possible to build a maximal gradation of $\mathfrak{g}$ associated to this map, cf. Proposition \ref{4.5} and Remark \ref{4.7}.
  
\section{Preliminaries}\label{s1}
We recall the basic results on the structure of  Kac-Moody Lie algebras 
and we set the notations.  More details can be found in the book of Kac \cite{vk}.
 We end by the definition of finitely graded Kac-Moody algebras.
 
\subsection{Generalized Cartan matrices}\label{1.1} Let $I$ be a finite index set. A matrix $A=(a_{i,j})_{i,j\in I}$ is called a 
\emph{generalized Cartan matrix} if it satisfies : \\
(1) $a_{i,i}=2$ $\qquad (i\in I)$\\
(2) $a_{i,j}\in\Zmius$ $\qquad (i\not= j)$\\
(3) $a_{i,j}=0 $ implies $a_{j,i}=0$.
\par
The matrix $A$ is called \emph{decomposable} if for a suitable permutation of 
$I$ it takes the form  $\left( \begin{array}{cc}
B & 0 \\ 0 & C\\ \end{array} \right)$ where $B$ and $C$ are square matrices. If $A$ is not 
decomposable, it is called \emph{indecomposable}.
\\
The matrix $A$ is called \emph{symmetrizable} if there exists an invertible diagonal matrix 
$D=\text{diag}(d_i, \; i\in I)$ such that $DA$ is symmetric. The entries $d_i, \; i\in I$,  
can be chosen to be positive rational and if moreover the matrix $A$ is indecomposable, then these
entries  are unique  up to a constant factor. 
\\
Any indecomposable generalized Cartan matrices is of one of the three 
mutually exclusive types : \emph{finite},  \emph{affine} and \emph{indefinite} (\cite{vk}, chap. 4).
\\
An indecomposable and symmetrizable generalized Cartan matrix $A$ is called \emph{Lorentzian} if it is  non-singular and the corresponding symmetric matrix has signature $(++...+-)$.\\
An indecomposable generalized Cartan matrix $A$ is called \emph{strictly hyperbolic} (resp. \emph{hyperbolic}) if the deletion of any one vertex, and the edges connected to it, of the corresponding  Dynkin diagram  yields a disjoint union of Dynkin diagrams  of finite (resp. finite or affine) type.
\\
Note that a symmetrizable  hyperbolic generalized Cartan matrix is non singular and Lorentzian (cf. \cite{moo}).

\subsection{Kac-Moody algebras and groups}\label{kma} (See \cite{vk} and\cite{pk}).
 \\ Let $A=(a_{i,j})_{i,j\in I}$ be an   indecomposable and symmetrizable generalized Cartan matrix. Let   
 $(\mathfrak{h}_{\Real},\;\Pi=\{\alpha_i, \, i\in I\}, \;\Pi\check{_{\,}}=
\{\alpha\check{_i}, \, i\in I\})$
 be a realization of $A$ over the real field $\Real$: thus $\mathfrak{h}_{\Real}$ is a real vector space   such that dim$\mathfrak{h}_{\Real}= |I|+\text{corank}(A)$, $\Pi$ and $\Pi\check{_{\,}}$ are linearly independent in $\mathfrak{h}_{\Real}^*$ and $\mathfrak{h}_{\Real}$ respectively such that $\langle\alpha_j,\alpha\check{_i}\rangle =
a_{i,j}$.  Let $\mathfrak{h}= \mathfrak{h}_{\Real}\otimes \Complex$, then $(\mathfrak{h},\;\Pi, \;\Pi\check{_{\,}} )$ is  a realization of $A$ over the complex field $\Complex$. 

It follows that, if $A$ is non-singular, then $\Pi\check{_{\,}}$ (resp. $\Pi$) is a basis of $\mathfrak{h}$ (resp. $\mathfrak{h}^*$); moreover $ \mathfrak{h}_{\Real}=\{h\in\mathfrak h\mid\alpha_i(h)\in\Real,\forall i\in I\}$ is well defined by the realization 
$(\mathfrak{h},\;\Pi, \;\Pi\check{_{\,}} )$. 

Let $\mathfrak{g} =\mathfrak{g}(A)$ be the complex Kac-Moody algebra associated to $A$ : it is generated by $\{\mathfrak{h}, e_i,f_i, \, i\in I\}$ with the following relations 

\begin{equation} 
\begin{array}{lll}
[\mathfrak{h}, \mathfrak{h}]=0, & [e_i,f_j]=\delta_{i,j}\alpha\check{_i} & (i,j \in I);\\
 
[h, e_i] = \langle\alpha_i, h\rangle e_i, & [h, f_i]= -\langle\alpha_i, h\rangle f_i & (h \in \mathfrak{h});\\
 
(\text{ad}e_i)^{1-a_{i,j}}(e_j)= 0, &(\text{ad}f_i)^{1-a_{i,j}}(f_j)= 0  & (i\not= j). 
\end{array}
\end{equation}
The Kac-Moody algebra $\mathfrak{g}=\mathfrak{g}(A)$ is called of finite, affine or indefinite type  if the corresponding generalized Cartan matrix $A$ is.
\par
The derived algebra $\mathfrak{g}'$ of $\mathfrak{g}$ is generated by the \emph{Chevalley generators} $e_i, f_i$, $i\in I$, and the center $\mathfrak{c}$ of $\mathfrak{g}$ lies in $\mathfrak{h}'=\mathfrak{h}\cap\mathfrak{g}'=\sum_{i\in I}\Complex\alpha\check{_i}$. 
If the generalized Cartan matrix $A$ is non-singular, 
 then $\mathfrak{g}=\mathfrak{g}'$ is a (finite or infinite)-dimensional simple Lie algebra, and the center $\mathfrak{c}$ is trivial.
\par
The subalgebra $\mathfrak{h}$  is a maximal ad$(\mathfrak{g})-$diagonalizable subalgebra of $\mathfrak{g}$, it is called the \emph{standard Cartan subalgebra} of $\mathfrak{g}$. Let $\Delta=\Delta(\mathfrak{g},\mathfrak{h})$ be the corresponding root system; then $\Pi$ is a root basis of $\Delta$ and $\Delta=\Delta^+\cup\Delta^-$, where $\Delta^{\pm}=\Delta\cap\Zpm\Pi$ is the set of positive (or negative) roots relative to the basis $\Pi$. For
$\alpha\in\Delta$, let $\mathfrak{g}_{\alpha}$ be the root space of $\mathfrak{g}$ corresponding to the root $\alpha$; then $\mathfrak{g} = \displaystyle\mathfrak{h}\oplus  (\mathop{\oplus}_{\alpha\in\Delta}\mathfrak{g}_{\alpha})$.
\par
The \emph{Weyl group} $W$ of $(\mathfrak{g},\mathfrak{h})$ is generated by the fundamental reflections $r_i$ $(i\in I)$ such that $r_i(h)=h-\langle \alpha_i, h\rangle\alpha\check{_i}$ for $h\in\mathfrak{h}$, it is a Coxeter group on $\{r_i, \, i\in I\}$ with length function $w \mapsto l(w)$, $w\in W$. The Weyl group $W$ acts on $\mathfrak{h}^*$ and $\Delta$, we set $\Delta^{re}=W(\Pi)$ (the real roots) and $\Delta^{im}=\Delta\setminus\Delta^{re}$ (the imaginary roots). Any root basis of $\Delta$ is $W-$conjugate to $\Pi$ or $-\Pi$. 
\par
A \emph{Borel subalgebra} of $\mathfrak{g}$ is a maximal completely solvable subalgebra. A \emph{parabolic subalgebra} of $\mathfrak{g}$ is a (proper) subalgebra containing a Borel subalgebra. 
The \emph{standard positive (or negative) Borel subalgebra} is $\mathfrak{b}^{\pm}:=\mathfrak{h}\oplus (\oplus_{\alpha\in\Delta^{\pm}}\mathfrak{g}_{\alpha})$. A parabolic subalgebra $\mathfrak{p}^+$ (resp. $\mathfrak{p}^-$) containing $\mathfrak{b}^+$ (resp. $\mathfrak{b}^-$) is called \emph{positive (resp. negative) standard parabolic subalgebra} of 
$\mathfrak{g}$; then there exists a subset $J$ of $I$ (called the type of $\mathfrak{p}^{\pm}$) such that  $\mathfrak{p}^{\pm}=\mathfrak{p}^{\pm}(J):=\displaystyle(\mathop{\oplus}_{\alpha\in \Delta_J}\mathfrak{g}_{\alpha}) +\mathfrak{b}^{\pm}$, where $\Delta_J=\Delta\cap(\oplus_{j\in J}\Z \alpha_j)$ (cf. \cite{kw}).
\par
In \cite{pk}, D.H. Peterson and V.G. Kac construct a group $G$, which is the connected and simply connected complex algebraic group associated to $\mathfrak{g}$ when $\mathfrak{g}$ is of finite type, depending only on the derived Lie algebra $\mathfrak{g}'$ and acting on $\mathfrak{g}$ via the adjoint representation  Ad : $G \rightarrow
\text{Aut}(\mathfrak{g})$. It is generated by the one-parameter subgroups $U_{\alpha}=\text{exp}(\mathfrak{g}_{\alpha})$, $\alpha\in\Delta^{re}$, and 
Ad$(U_{\alpha})= \text{exp(ad}\mathfrak{g}_{\alpha}))$. 
 In the definitions of J. Tits \cite{tits} $G$ is the group of complex points of $\mathfrak G_D$ where $D$ is the datum associated to $A$ and the $\Z-$dual $\Lambda$ of $\bigoplus_{i\in I}\Z\alpha_i^\vee$.
\par
The Cartan subalgebras of $\mathfrak{g}$ are $G-$conjugate.  If $\mathfrak{g}$ is not of finite type, there are exactly two conjugate classes (under the adjoint action of $G$) of Borel subalgebras : $G.\mathfrak{b}^+$ and $G.\mathfrak{b}^-$. A Borel subalgebra $\mathfrak{b}$ of $\mathfrak{g}$ which is  $G-$conjugate to $\mathfrak{b}^+$ (resp. $\mathfrak{b}^-$) is called positive (resp. negative). It follows that any parabolic subalgebra $\mathfrak{p}$ of $\mathfrak{g}$ is  $G-$conjugate to a standard positive (or negative) parabolic subalgebra, in which case,  we say that $\mathfrak{p}$ is positive (or negative).

\subsection{Standard Kac-Moody subalgebras and subgroups}
Let  $J$ be a nonempty subset of $I$. Consider the generalized Cartan matrix $A_J= (a_{i,j})_{i,j\in J}$.
\begin{defn} 
The subset $J$ is called of finite, affine or indefinite type if the corresponding generalized Cartan matrix $A_J$ is.  We say also that $J$ is connected, if the Dynkin subdiagram, with  vertices indexed by $J$, is connected, or equivalently, the corresponding generalized Cartan submatrix $A_J$ is indecomposable. 
\end{defn}
\begin{prop}\label{realj}
Let $\Pi_J=\{\alpha_j, \; j\in J\}$ and $\Pi\check{_J}=
\{\alpha\check{_j}, \; j\in J\}$. Let $\mathfrak{h}_J'$ be the subspace of $\mathfrak{h}$ generated by $\Pi\check{_J}$, and  $\mathfrak{h}^J=\Pi_J^{\perp}=\{h\in\mathfrak{h}, \langle\alpha_j,h\rangle=0, \; \forall j\in J\}$. Let $\mathfrak{h}_J''$ be a supplementary subspace of $\mathfrak{h}_J'+ \mathfrak{h}^J$ in $\mathfrak{h}$ 
 and let $$\mathfrak{h}_J= \mathfrak{h}_J' \oplus \mathfrak{h}_J'',$$ then, we have :\\
1)  $(\mathfrak{h}_J,\;\Pi_J, \;\Pi\check{_J} )$ is  a  realization of the generalized Cartan matrix $A_J$. 
Hence $\mathfrak{h}_J''=\{0\}$, $\mathfrak{h}_J=\mathfrak{h}_J'$ when $A_J$ is regular (e.g. when $J$ is of finite type).
\\
2) The subalgebra $\mathfrak{g}(J)$ of $\mathfrak{g}$, generated by $\mathfrak{h}_J$ and the $e_j$, $f_j$, $j\in J$, is the Kac-Moody Lie algebra associated to the realization  $(\mathfrak{h}_J,\;\Pi_J, \;\Pi\check{_J} )$ of  $A_J$. \\
3) The corresponding  root system  $\Delta(J)=\Delta(\mathfrak{g}(J),\mathfrak{h}_J)$ can be identified with $\Delta_J:=\Delta\cap(\oplus_{j\in J}\Z \alpha_j)$.
\end{prop}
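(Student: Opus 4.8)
The plan is to prove the three assertions in the order stated: part~1 is a linear-algebra statement about realizations, part~2 is an identification by generators and relations, and part~3 reads the root system off the weight decomposition. Throughout I write $Q_J:=\sum_{j\in J}\Z\alpha_j$, so that $\Delta_J=\Delta\cap Q_J$, and $Q_J^+:=\sum_{j\in J}\Z_{\ge0}\alpha_j$.

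For part~1, I would first compute dimensions. Linear independence of $\Pi\check{_J}\subset\Pi\check{_{\,}}$ gives $\dim\mathfrak h_J'=|J|$; linear independence of $\Pi_J\subset\Pi$ in $\mathfrak h^*$ makes $h\mapsto(\langle\alpha_j,h\rangle)_{j\in J}$ a surjection $\mathfrak h\to\Complex^J$ with kernel $\mathfrak h^J$, so $\dim\mathfrak h^J=\dim\mathfrak h-|J|=|I|+\text{corank}(A)-|J|$; and $\sum_{j}c_j\alpha\check{_j}$ lies in $\mathfrak h^J$ exactly when $(c_j)_j$ is in the left kernel of $A_J$, so $\dim(\mathfrak h_J'\cap\mathfrak h^J)=\text{corank}(A_J)$. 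Combining, $\dim(\mathfrak h_J'+\mathfrak h^J)=\dim\mathfrak h-\text{corank}(A_J)$, hence $\dim\mathfrak h_J''=\text{corank}(A_J)$ and $\dim\mathfrak h_J=|J|+\text{corank}(A_J)$, which is the right dimension for a realization of $A_J$. The identities $\langle\alpha_l,\alpha\check{_k}\rangle=a_{k,l}$ persist after restriction of $\alpha_l$ to $\mathfrak h_J$ because $\alpha\check{_k}\in\mathfrak h_J$, and $\Pi\check{_J}$ is independent in $\mathfrak h_J\supset\mathfrak h_J'$; the only point that uses the construction is that $\{\alpha_j|_{\mathfrak h_J}\}$ stays independent. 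For this I would argue: if $\varphi=\sum_j c_j\alpha_j$ vanishes on $\mathfrak h_J$, then its vanishing on $\mathfrak h_J'$ puts $(c_j)_j$ in the left kernel of $A_J$, its vanishing on $\mathfrak h^J$ is automatic since $\varphi\in\text{span}(\Pi_J)$, so $\varphi$ already vanishes on $\mathfrak h_J'+\mathfrak h^J$; then vanishing on the chosen complement $\mathfrak h_J''$ forces $\varphi\equiv0$ on all of $\mathfrak h$, whence $c=0$ by independence of $\Pi$. This is precisely where $\mathfrak h_J''$ must supplement $\mathfrak h_J'+\mathfrak h^J$ and not merely $\mathfrak h_J'$. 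When $A_J$ is regular, $\text{corank}(A_J)=0$ kills $\mathfrak h_J''$, giving the last sentence of part~1.

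For part~2, the elements $\mathfrak h_J,e_j,f_j$ ($j\in J$) of $\mathfrak g=\mathfrak g(A)$ satisfy in $\mathfrak g$ all the defining relations~(1) of the Kac--Moody algebra $\mathfrak g(A_J)$ attached to the realization of part~1 (these are just the $J$-indexed relations among the relations~(1) for $\mathfrak g$), so there is a Lie algebra homomorphism $\psi:\mathfrak g(A_J)\to\mathfrak g$ that is the identity on $\mathfrak h_J$ and matches Chevalley generators, with image $\mathfrak g(J)$. For injectivity I would use the triangular decompositions $\mathfrak g(A_J)=\mathfrak n^-(A_J)\oplus\mathfrak h_J\oplus\mathfrak n^+(A_J)$ and $\mathfrak g=\mathfrak n^-\oplus\mathfrak h\oplus\mathfrak n^+$: since $\psi(e_j)=e_j$ and $\psi(f_j)=f_j$, $\psi$ carries the three summands into the corresponding ones, so by directness of $\mathfrak g=\mathfrak n^-\oplus\mathfrak h\oplus\mathfrak n^+$ it is enough to prove $\psi$ injective on $\mathfrak h_J$ (clear) and on $\mathfrak n^\pm(A_J)$. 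On $\mathfrak n^+(A_J)$ it is injective because in each degree $\beta\in Q_J^+$ both the free Lie algebra on the $e_i$ and the ideal of Serre relations reduce to those generated by the $e_j$ with $j\in J$ alone: any bracket monomial in the $e_i$ having degree in $Q_J^+$ involves only $e_j$'s with $j\in J$, since $\Pi$ is a basis of the root lattice. Hence $\mathfrak n^+(A)_\beta=\mathfrak n^+(A_J)_\beta$ for $\beta\in Q_J^+$, and symmetrically for $\mathfrak n^-$; so $\psi$ is injective, i.e. an isomorphism of $\mathfrak g(A_J)$ onto $\mathfrak g(J)$.

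For part~3, since $e_j,f_j,\mathfrak h_J$ are $\text{ad}(\mathfrak h)$-eigenvectors with weights in $Q_J$, the subalgebra $\mathfrak g(J)$ is $\text{ad}(\mathfrak h)$-stable and $\mathfrak g(J)=\bigoplus_{\beta\in Q_J}\bigl(\mathfrak g(J)\cap\mathfrak g_\beta\bigr)$; by part~1 the restriction $Q_J\to\mathfrak h_J^*$ is injective, so this $Q_J$-grading coincides with the $\mathfrak h_J$-weight grading, hence (via $\psi$) with the root-space decomposition of $\mathfrak g(A_J)$, its $0$-component being $\mathfrak g(J)\cap\mathfrak h=\mathfrak h_J$. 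For $\beta\in\Delta^+\cap Q_J$ one has $0\ne\mathfrak g_\beta\subset\mathfrak n^+$, and since $\mathfrak n^+$ is generated by the $e_i$, the degree argument of part~2 shows $\mathfrak g_\beta$ is spanned by brackets in the $e_j$, $j\in J$, so $\mathfrak g_\beta\subset\mathfrak g(J)$ and $(\mathfrak g(J))_\beta=\mathfrak g_\beta$; symmetrically for negative roots. Therefore $\mathfrak g(J)=\mathfrak h_J\oplus\bigoplus_{\beta\in\Delta_J}\mathfrak g_\beta$ and $\Delta(J)=\Delta(\mathfrak g(J),\mathfrak h_J)$ is identified, through $Q_J\hookrightarrow\mathfrak h_J^*$, with $\Delta_J=\Delta\cap Q_J$, multiplicities included. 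I expect the main obstacle to be the injectivity in part~2, which rests on two interlocking facts — that $\Pi_J$ stays independent on $\mathfrak h_J$ (part~1), and that a bracket of Chevalley generators whose degree lies in $Q_J^+$ uses only the generators indexed by $J$; once these are in place the comparison of root multiplicities in part~3 is automatic.
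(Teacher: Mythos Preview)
Your proof is correct and follows the same line as the paper's: for part~1 you carry out exactly the dimension count and the ``restriction to $\mathfrak h_J$ is injective on $\mathrm{span}(\Pi_J)$'' argument that the paper sketches, and for parts~2 and~3 you supply the standard triangular-decomposition and degree argument that the paper simply labels ``straightforward.'' One cosmetic slip: when $\varphi=\sum_j c_j\alpha_j$ vanishes on $\mathfrak h_J'$ the vector $(c_j)$ lands in the \emph{right} kernel of $A_J$ (i.e.\ $A_Jc=0$), not the left kernel---but this remark is not used in your argument, so nothing is affected.
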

\textbf{N.B.} The derived algebra $\mathfrak{g}'(J)$ of $\mathfrak{g}(J)$ is generated by the $e_j,f_j$ for $j\in J$; it does not depend of the choice of $\mathfrak{h}_J''$.
\begin{proof}$\,$\\
1) Note that $\dim(\mathfrak{h}_J'') =\dim(\mathfrak{h}_J'\cap\mathfrak{h}^J)=corank(A_J)$. In particular,  $\dim(\mathfrak{h}_J)-
|J|= corank(A_J)$. If $\alpha\in Vect(\alpha_j, \; j\in J)$, then $\alpha$ is entirely determined by its restriction to   $\mathfrak{h}_J$ and hence $\Pi_J$ defines, by restriction, a free family in $\mathfrak{h}_J^*$. As $\Pi\check{_J} $ is  free,  assertion 1) holds.  \\
Assertions 2) and 3) are straightforward.
\end{proof}
In the same way, the subgroup $G_J$ of $G$ generated by $U_{\pm\alpha_j}$, $j\in J$, is equal to  the Kac-Moody group associated to the generalized Cartan matrix $A_J$: it is clearly a quotient; the equality is proven in \cite[5.15.2]{rou}.
\subsection{The invariant bilinear form}\label{ibf} (See \cite{vk}).\\ We recall that the generalized Cartan matrix $A$ is supposed indecomposable and symmetrizable. There exists a nondegenerate ad$(\mathfrak{g})-$ invariant symmetric $\Complex-$bilinear form 
$( . \, , \, .)$ on $\mathfrak{g}$, which is entirely determined by its restriction to $\mathfrak{h}$, such that $$(\alpha\check{_i},h)=\frac{(\alpha\check{_i},\alpha\check{_i})}{2}\langle\alpha_i, h\rangle, \quad  i\in I, \; h\in\mathfrak{h},$$ 
and we may thus assume that \begin{equation}\label{bf}
(\alpha\check{_i},\alpha\check{_i}) \; \text{is a positive rational for all}\; i.
\end{equation} The nondegenerate  invariant bilinear form $( . \, , \, .)$ induces an isomorphism $\nu$ : $\mathfrak{h}\to\mathfrak{h}^*$ such that $\alpha_i= \displaystyle \frac{2\nu(\alpha\check{_i})}{(\alpha\check{_i},\alpha\check{_i})}$  and  $\alpha\check{_i}= \displaystyle \frac{2\nu^{-1}(\alpha{_i})}{(\alpha{_i},\alpha{_i})}$ for all $i$.\\
There exists a totally isotropic subspace $\mathfrak{h}''$ of $\mathfrak{h}$ (relatively to  $( . \, , \, .)$) which is in duality  with the center $\mathfrak{c}$ of $\mathfrak{g}$. In particular, $\mathfrak{h}''$ defines a supplementary subspace of $\mathfrak{h}'$ in $\mathfrak{h}$.\\
Note that  any invariant  symmetric bilinear form $b$ on $\mathfrak{g}$ satisfying  $b(\alpha\check{_i},\alpha\check{_i}) >0$, 
$\forall i\in I$, is nondegenerate and $b(\alpha\check{_i},h)=\frac{b(\alpha\check{_i},\alpha\check{_i})}{2}\langle\alpha_i, h\rangle, \; \forall  i\in I, \; \forall h\in\mathfrak{h}$. It follows that the restriction of $b$ to $\mathfrak{g}'$ is proportional to that of $( . \, , \, .)$.
In particular, if  $A$ is  non-singular, then the invariant bilinear form $( . \, , \, .)$  satisfying the condition \ref{bf} is unique up to a positive rational factor. 

\subsection{The Tits cone}(See \cite{vk}, chap. 3 and 5).\\ Let $C:=\{h\in\mathfrak{h}_{\Real}; \langle\alpha_i,h\rangle\geq 0, \forall  i\in I\} $ be the fundamental chamber (relative to the root basis $\Pi$) and let $X:=\displaystyle\mathop{\bigcup}_{w\in W}w(C)$ be the Tits cone. We have the following description of the Tits cone: \\
(1) $X=\{h\in\mathfrak{h}_{\Real}; \langle\alpha, h\rangle <0$ only for a finite number of $\alpha\in\Delta^+\}$.\\
(2) $X=\mathfrak{h}_{\Real}$ if and only if the generalized Cartan matrix $A$ is of finite type.\\
(3) If $A$ is of affine type, then $X=\{h\in\mathfrak{h}_{\Real}; \langle\delta, h\rangle >0\}\cup\Real\nu^{-1}(\delta)$, where $\delta$ is the lowest imaginary positive root of $\Delta^+$. \\
(4) If $A$ is of indefinite  type, then the closure of the Tits cone, for the metric topology on $\mathfrak{h}_{\Real}$,  is ${\bar X}=\{h\in\mathfrak{h}_{\Real}; \langle\alpha, h\rangle \geq 0, \; \forall\alpha\in\Delta_{im}^+ \}$. \\
(5) If $h\in X$, then $h$ lies in the interior $\buildrel\circ\over X$ of $X$ if and only if the fixator $W_h$ of $h$, in the Weyl group $W$, is finite. Thus $\buildrel\circ\over X$ is the union of finite type facets of $X$. \\
(6) If $A$ is hyperbolic, then ${\bar X}\cup(-{\bar X})=\{h\in\mathfrak{h}_{\Real}; (h, h) \leq 0\}$ and the set of imaginary roots is $\Delta^{im}=\{\alpha\in Q\setminus\{0\}; (\alpha,\alpha)\leq 0 \}$, where $Q=\Z\Pi$ is the root lattice. 

\begin{rem}\label{tc}
Combining  (3) and (4) one obtains that if $A$ is not of finite type then  ${\bar X}=\{h\in\mathfrak{h}_{\Real}; \langle\alpha, h\rangle \geq 0, \; \forall\alpha\in\Delta_{im}^+ \}$. 
\end{rem}

\subsection{Graded Kac-Moody Lie algebras}
From now on we suppose that the  Kac-Moody Lie algebra $\mathfrak{g}$ is indecomposable and symmetrizable.

\begin{defn}\label{DefGrad} Let $\Sigma$ be a root system of Kac-Moody type. The Kac-Moody Lie algebra $\mathfrak{g}$ is said to be finitely  $\Sigma-$graded if : \\
(i) $\mathfrak{g}$ contains, as  a subalgebra,  a Kac-Moody algebra ${\mathfrak{m}}$ whose root system relative to Cartan subalgebra ${\mathfrak{a}}$ is equal to $\Sigma$.\\
(ii) $\mathfrak{g}=\displaystyle\sum_{{\alpha}\in\Sigma \cup\{0\}}V_{{\alpha}}$, with $V_{{\alpha}}=\{x\in \mathfrak{g}\, ; \;
[a, x]=\langle {\alpha}, a\rangle x, \; \forall a \in{\mathfrak{a}}\}$. \\
(iii) $V_{{\alpha}}$ is finite dimensional  for all ${{\alpha}\in\Sigma\cup\{0\}}$.
\par We say that $\mathfrak m$ (as in (i) above) is a grading subalgebra, and $(\mathfrak g,\Sigma,\mathfrak m)$ a gradation with finite multiplicities (or, to be short, a finite gradation). 
\end{defn}

Note that from (ii) the Cartan subalgebra ${\mathfrak{a}}$ of ${\mathfrak{m}}$ is ad$(\mathfrak{g})-$diagonalizable, and we may assume that ${\mathfrak{a}}$ is contained in the standard  Cartan subalgebra $\mathfrak{h}$ of $\mathfrak{g}$.

\begin{lem}\label{trans}Let $\mathfrak{g}$ be a Kac-Moody algebra finitely  $\Sigma-$graded, with grading subalgebra $\mathfrak m$. 
If $\mathfrak m$ itself is finitely $\Sigma'-$graded (for some root system $\Sigma'$of Kac-Moody type), then $\mathfrak{g}$ is finitely $\Sigma'-$graded.
\end{lem}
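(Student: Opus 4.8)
The plan is to chase the defining data through the two gradations. Suppose $\mathfrak{g}$ is finitely $\Sigma$-graded with grading subalgebra $\mathfrak m$, and $\mathfrak m$ is in turn finitely $\Sigma'$-graded with grading subalgebra $\mathfrak m'$. By Definition \ref{DefGrad}(i), $\mathfrak m'$ is a Kac-Moody subalgebra of $\mathfrak m$, hence a subalgebra of $\mathfrak{g}$, whose root system relative to a Cartan subalgebra $\mathfrak a'$ is $\Sigma'$; and by the remark after the definition we may assume $\mathfrak a'\subset\mathfrak a\subset\mathfrak h$. So condition (i) for the pair $(\mathfrak g,\Sigma',\mathfrak m')$ holds immediately with the same $\mathfrak m'$. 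It remains to check (ii) and (iii), i.e. that $\mathrm{ad}(\mathfrak a')$ acts diagonalizably on $\mathfrak g$ with weights in $\Sigma'\cup\{0\}$ and finite-dimensional weight spaces.

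First I would write $\mathfrak g=\bigoplus_{\alpha\in\Sigma\cup\{0\}}V_\alpha$ for the $\mathfrak a$-weight decomposition coming from the first gradation, and $\mathfrak m=\bigoplus_{\beta\in\Sigma'\cup\{0\}}V'_\beta$ for the $\mathfrak a'$-weight decomposition coming from the second. Since $\mathfrak a'\subset\mathfrak a$, restriction gives a map $r:\Sigma\cup\{0\}\to (\mathfrak a')^*$, and each $V_\alpha$ — being a weight space for the larger abelian algebra $\mathfrak a$ — is contained in the $\mathfrak a'$-weight space for the weight $r(\alpha)$. Hence $\mathrm{ad}(\mathfrak a')$ is already diagonalizable on $\mathfrak g$, the $\mathfrak a'$-weight space $W_\mu$ for $\mu\in(\mathfrak a')^*$ being $W_\mu=\bigoplus_{\alpha:\,r(\alpha)=\mu}V_\alpha$. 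The key point is then to identify which $\mu$ occur and to bound $\dim W_\mu$. For $\dim W_\mu$: I would show the set $\{\alpha\in\Sigma\cup\{0\}\mid r(\alpha)=\mu\}$ is finite for each $\mu$; this is where the finite-multiplicity hypotheses are used. Indeed, applying the decomposition within $\mathfrak m$ itself, $\mathfrak m=\bigoplus_\alpha (V_\alpha\cap\mathfrak m)$ refines to the $\mathfrak a'$-decomposition $\bigoplus_\beta V'_\beta$, and since $\mathfrak a$ is a Cartan subalgebra of $\mathfrak m$ with $\mathfrak a=V_0\cap\mathfrak m$ of finite dimension, the number of $\alpha\in\Sigma$ restricting to a given $\beta\in\Sigma'$ is bounded by $\dim V'_\beta<\infty$ (for $\beta\neq0$) respectively by $\dim\mathfrak a<\infty$; more carefully, the roots of $\mathfrak m$ lying over a fixed $\beta$ span, together with $\mathfrak a'$, a finite-dimensional space, forcing finiteness of the fibre. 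Combined with $\dim V_\alpha<\infty$ from the first gradation, this gives $\dim W_\mu<\infty$.

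Finally, for the weight set: I must check that every nonzero $\mathfrak a'$-weight $\mu$ of $\mathfrak g$ actually lies in $\Sigma'$, not merely in $r(\Sigma)$. This is the step I expect to be the main obstacle, since a priori $r(\alpha)$ for $\alpha\in\Sigma$ could be a nonzero functional on $\mathfrak a'$ that is not a root of $\mathfrak m'$. The resolution should come from $\mathfrak{sl}_2$-theory: pick $\mu=r(\alpha)\neq0$ and a nonzero $x\in V_\alpha$; the subalgebra of $\mathfrak g$ generated by the $\mathfrak a'$-weight vectors together with $\mathfrak a'$ and the Chevalley-type generators of $\mathfrak m'$ is integrable, so the $\mathfrak a'$-weights of any $\mathrm{ad}(\mathfrak m')$-submodule of $\mathfrak g$ are stable under the Weyl group $W'$ of $\mathfrak m'$ and, being $W'$-stable, a nonzero such $\mu$ that is also a weight of the adjoint $\mathfrak m'$-module — which it is, since $\mathfrak g$ is an integrable $\mathfrak m'$-module with finite-dimensional weight spaces — must be a root of $\mathfrak m'$, i.e. $\mu\in\Sigma'$; the only alternative, $\mu=0$, is the zero weight. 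Here one uses that $\mathfrak g$, as a module over the Kac-Moody algebra $\mathfrak m'$, is integrable (the $e_i,f_i$ of $\mathfrak m'$ act locally nilpotently, being $\mathrm{ad}$-nilpotent real root vectors of $\mathfrak m\subset\mathfrak g$) with finite-dimensional weight spaces by the previous paragraph, hence a direct sum of highest-weight-type pieces whose weights are controlled. Assembling these three points — (i) inherited, diagonalizability and weight-set membership from integrability, finite multiplicities from the two given finiteness hypotheses — yields that $(\mathfrak g,\Sigma',\mathfrak m')$ is a finite gradation, which is the assertion.
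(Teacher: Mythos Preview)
Your treatment of conditions (i) and (iii) is essentially the paper's argument: once $\mathfrak a'\subset\mathfrak a\subset\mathfrak h$, the $\mathfrak a'$-weight spaces of $\mathfrak g$ are $W_\mu=\bigoplus_{r(\alpha)=\mu}V_\alpha$, and finiteness of the fibre $\{\alpha\in\Sigma\cup\{0\}\mid r(\alpha)=\mu\}$ follows because each such $\alpha$ contributes a nonzero $\mathfrak m_\alpha$ to the finite-dimensional space $V'_\mu\subset\mathfrak m$. That part is fine.

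The gap is in your verification of condition (ii), which you flag as the ``main obstacle'' and attack via integrability of $\mathfrak g$ as an $\mathfrak m'$-module. That argument does not work: $W'$-stability of the weight set of an integrable $\mathfrak m'$-module with finite multiplicities does \emph{not} force nonzero weights to lie in $\Sigma'$. Already for $\mathfrak m'$ of type $A_1$ with simple root $\gamma$, the $5$-dimensional irreducible module has weight $2\gamma$, which is $W'$-stable, integral and in the root lattice, yet $2\gamma\notin\Sigma'$. So the step ``being $W'$-stable \dots\ must be a root of $\mathfrak m'$'' is false as stated, and nothing you have set up rules out such weights in $\mathfrak g$.

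The fix is much simpler than what you attempt and is exactly what the paper does: you never need to look at a general $x\in V_\alpha\subset\mathfrak g$. For each $\alpha\in\Sigma$ the root space $\mathfrak m_\alpha$ \emph{inside $\mathfrak m$} is nonzero (this is the meaning of $\Sigma=\Delta(\mathfrak m,\mathfrak a)$ in condition (i)). Any nonzero $y\in\mathfrak m_\alpha$ has $\mathfrak a'$-weight $r(\alpha)$, and since $y\in\mathfrak m$ and $\mathfrak m$ is $\Sigma'$-graded, condition (ii) for the second gradation gives $r(\alpha)\in\Sigma'\cup\{0\}$ immediately. Thus every $\mathfrak a'$-weight of $\mathfrak g$, being some $r(\alpha)$ with $\alpha\in\Sigma\cup\{0\}$, already lies in $\Sigma'\cup\{0\}$; no integrability, Weyl-group, or $\mathfrak{sl}_2$ input is needed. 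The paper accordingly dispatches (i) and (ii) in one line (``clearly satisfied'') and spends its only sentence on the fibre-finiteness for (iii).
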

\begin{proof} If $\mathfrak m'$ is the grading subalgebra of $\mathfrak m$, we may suppose the Cartan subalgebras such that $\mathfrak a'\subset\mathfrak a\subset\mathfrak h$, with obvious notations.
 Conditions (i) and (ii) are clearly satisfied for $\mathfrak g$, $\mathfrak m'$ and $\mathfrak a'$.
 Condition (iii) for $\mathfrak m$ and $\Sigma'$ tells that, for all $\alpha'\in\Sigma'$, the set 
 $\{\alpha\in\Sigma\mid\alpha_{\vert\mathfrak a'}=\alpha'\}$ is finite.
  But $V_{\alpha'}=\oplus_{\alpha_{\vert\mathfrak a'}=\alpha'}\,V_{\alpha}$, so each $V_{\alpha'}$ is finite dimensional if this is true for each $V_{\alpha}$.
\end{proof}

\subsection{Examples of gradations.}\label{ExGrad} $\,$\\  
1) Let $\Delta= \Delta(\mathfrak{g},\mathfrak{h})$ the root system of $\mathfrak{g}$ relative to $\mathfrak{h}$, then $\mathfrak{g}$ is finitely $\Delta-$graded : this is the trivial gradation of $\mathfrak{g}$ by its own  
root system.\\
2) Let $\mathfrak{g}_{\Real}$ be an almost split real form of $\mathfrak{g}$ (see \cite{b3r}) and let $\mathfrak{t}_{\Real}$ be a maximal split toral subalgebra of $\mathfrak{g}_{\Real}$. Suppose that the restricted root system  $\Delta'= \Delta(\mathfrak{g}_{\Real},\mathfrak{t}_{\Real})$ is reduced of Kac-Moody type. In [\cite{ba}, \S 9], N. Bardy constructed a split real Kac-Moody subalgebra  $\mathfrak{l}_{\Real}$ of $\mathfrak{g}_{\Real}$ such that $\Delta'= \Delta(\mathfrak{l}_{\Real},\mathfrak{t}_{\Real})$, then $\mathfrak{g}$ is obviously  finitely $\Delta'- $graded.

\par We get thus many examples coming from known tables for almost split real forms: see \cite{b3r} in the affine case and \cite{bm} in the hyperbolic case.
\\
3) When $\mathfrak{g}_{\Real}$ is an almost compact real form of $\mathfrak{g}$, the same constructions should lead to gradations by finite root systems, as in \cite{bz} e.g..
\section{Gradations associated to $C-$admissible pairs.}\label{s2}$\,$\\
We recall some definitions introduced by H. Rubenthaler (\cite{rub})  and J. Nervi (\cite{nerv1}, \cite{nerv2}).  
Let $J$ be a subset of $I$ of finite type. For $k\in I\setminus J$, we denote by $I_k$ the connected component, containing $k$, of the Dynkin subdiagram corresponding to $J\cup\{k\}$, and let $J_k:=I_k\setminus\{k\}$. 

 Suppose now that $I_k$ is of finite type for all $k\in I\setminus J$ : that is always the case if $\mathfrak{g}$ is of affine type and $ |I\setminus J|\geq 2$ or $\mathfrak{g}$ is of hyperbolic type and $ |I\setminus J|\geq 3$. \\
For $k\in I\setminus J$, let $\mathfrak{g}(I_k)$ be the simple subalgebra generated by $\mathfrak{g}_{\pm\alpha_i}$, $i\in I_k$, then  $\mathfrak{h}_{I_k}=\mathfrak{h}\cap\mathfrak{g}(I_k)= \sum_{i\in I_k}\Complex\alpha\check{_i}$ is a Cartan subalgebra of $\mathfrak{g}(I_k)$. Let $H_k$ be the unique element of $\mathfrak{h}_{I_k}$ such that $\langle\alpha_i, H_k\rangle =2\delta_{i,k}$, $\forall i\in I_k$.

\begin{defn}\label{defadm} We  preserve the notations and the assumptions introduced above. \\ 
1) Let $k\in I\setminus J$.  \\
(i) The pair $(I_k,J_k)$ is called admissible if there exist $E_k, F_k\in \mathfrak{g}(I_k)$ such that  $(E_k,H_k,F_k)$ is an $\mathfrak{sl}_2-$triple.\\
(ii) The pair $(I_k,J_k)$ is called $C-$admissible if it is admissible and the simple Lie algebra $\mathfrak{g}(I_k)$ is $A_1-$graded 
by the root system of type $A_1$ associated to the $\mathfrak{sl}_2-$triple $(E_k,H_k,F_k)$.
\\
2) the pair $(I,J)$  is called $C-$admissible if the pairs $(I_k,J_k)$ are $C-$admissible for all $k\in I\setminus J$.
\end{defn}

Schematically, any $C-$admissible pair $(I,J)$ is represented  by the Dynkin diagram, corresponding to $A$, on which the vertices indexed by   $J$   are denoted by white circles $\circ$ and those of $I\setminus J$ are denoted by black circles $\bullet$.

\par  It is known that, when $(I_k,J_k)$ is admissible,  $H_k=\sum_{i\in I_k}n_{i,k}\alpha\check{_i}$, where $n_{i,k}$ are positive integers (see \cite{rub} or [\cite{nerv2}; Prop. 1.4.1.2]). 

\begin{rem}\label{eqc}
Note that this definition, for $C-$admissible pairs, is equivalent to that introduced by Rubenthaler and Nervi (see \cite{rub}, \cite{nerv1}) in terms of  prehomogeneous spaces of parabolic type :  if $(I_k,J_k)$ is $C-$admissible, define   for  $p\in\Z$,  the subspace $d_{k,p}:=\{X\in \mathfrak{g}(I_k) \, ;\; [H_k, X]=2pX\}$; then  $(d_{k,0},d_{k,1})$ is an irreducible regular and commutative  prehomogeneous space of parabolic type, and  $d_{k,p}=\{0\}$ for $|p|\geq 2$. Then  we say that $(I_k,J_k)$ is  an irreducible $C-$admissible pair. According to Rubenthaler and  Nervi ([\cite{rub}; table1] or [\cite{nerv1}; table 2] ) the irreducible $C-$admissible pair $(I_k,J_k)$ should be among the  list in Table 1 below.
\end{rem}

\begin{defn}\label{jc} Let $J$ be a subset of $I$ and let $i,k\in I\setminus J$.  We say that $i$ and $k$ are $J-$connected relative to $A$ if there exist $j_0,j_1, ...., j_{p+1}\in I$ such that $j_0=i$, $j_{p+1}=k$, $j_s\in J$, $\forall s=1, 2, ..., p$, and $a_{j_s, j_{s+1}}\not=0$, $\forall s=0,1, ..., p$. 
\end{defn} 
\begin{rem}\label{rjc}
Note that the relation  `` to be $J-$connected '' is symmetric on $i$ and $k$. As the generalized Cartan matrix is assumed to be indecomposable, for any  vertices $i, k\in I\setminus J$ there exist  $i_0, i_1, ...., i_{p+1}\in I\setminus J$ such that $i_0=i$, $i_{p+1}=k$ and  $i_s$ and $i_{s+1}$ are $J-$connected  for all $s=0, 1, ..., p$.
\end{rem}

\medskip
\begin{center}
 {\textbf{Table 1 }}
\end{center}
\medskip
\begin{center}
{{List of irreducible $C-$admissible pairs }}
\end{center}
\medskip
\begin{center}
\begin{tabular}{|l|l|}
\hline
  &  \\
$A_{2n-1}$, ${_{n\geq 1}}$  &
\hbox to 5cm{\lower 2pt \hbox{$\displaystyle\mathop{\circ}^{_1}$}
\hglue -4pt \hrulefill\lower 2pt
\hbox{\hglue -0pt$\displaystyle\mathop{\circ}^{_2}$}
\hglue -4pt\hrulefill\lower 0.5pt \hbox{....}
\hglue -2pt \hrulefill\hglue -4pt\hrulefill\hglue 0pt\lower 2pt \hbox{$\circ$}
\hglue -4pt \hrulefill\lower 2pt
\hbox{\hglue -0pt$\displaystyle\mathop{\bullet}^{_n}$}
\hglue -4pt\hrulefill\hglue 0pt\lower 2pt \hbox{$\circ$}\hglue -0pt
\hglue -1pt\hrulefill\hglue 0pt\lower 0.5pt \hbox{....}
\hglue -2pt \hrulefill
\lower 2pt \hbox{$\circ$}\hglue -0pt \hrulefill
\lower 2pt \hbox{\hglue -6pt$\displaystyle\mathop{\circ}^{_{2n-1}}$}  }  \\
 & \\
 \hline
 & \\
$B_n$, ${_{n\geq 3}}$ &
\hbox to 4cm {\lower 2pt\hbox{$\displaystyle
\mathop{\bullet}^{_1}$}\hglue -1,6pt
 \hrulefill\lower 2pt\hbox{$\displaystyle
\mathop{\circ}^{_2}$}\hglue -1pt\hrulefill\lower 2pt\hbox{$\displaystyle
\mathop{\circ}^{_3}$}\hglue -1pt\hrulefill\lower 0.5pt
\hbox{......}\lower 2pt \hbox{}\hglue -0pt \hrulefill
\lower 2pt\hbox{$\displaystyle\mathop{\circ}$}\hglue -1pt\hrulefill
\lower 2pt
\hbox{\offinterlineskip {$\circ$}\hglue -2,8pt\vbox{ {\hrule height 0,3pt
width 0,8cm}\vskip 3pt{\hrule height 0,3pt width 0,8cm}
\vskip 0,9pt}\hglue -11pt $>$ \hglue -2,7pt{$\displaystyle\mathop{\circ}^{_n}$
}}}
\\
 & \\
 \hline
 & \\
$C_n$, ${_{n\geq 2}}$ &
\hbox to 4cm {\lower 2pt\hbox{$\displaystyle
\mathop{\circ}^{_1}$}\hglue -1,6pt
 \hrulefill\lower 2pt\hbox{$\displaystyle
\mathop{\circ}^{_2}$}\hglue -1pt\hrulefill\lower 2pt\hbox{$\displaystyle
\mathop{\circ}^{_3}$}\hglue -1pt\hrulefill\lower 0.5pt
\hbox{......}\lower 2pt \hbox{}\hglue -0pt \hrulefill
\lower 2pt\hbox{$\displaystyle\mathop{\circ}$}\hglue -1pt\hrulefill
\lower 2pt
\hbox{\offinterlineskip {$\circ$}\hglue -2,8pt\vbox{ {\hrule height 0,3pt
width 0,8cm}\vskip 3pt{\hrule height 0,3pt width 0,8cm}
\vskip 0,9pt}\hglue -15pt $<$ \hglue +1,5pt{$\displaystyle\mathop{\bullet}^{_n}$
}}}
\\
& \\
\hline
& \\
$D_{n,1}$, $_{{n\geq 4}}$ &  
\hbox to 4cm{\lower 2pt \hbox{$\displaystyle \mathop{\bullet}^{_1}$}
\hglue -4pt\hrulefill\lower 2pt
\hbox{$\displaystyle\mathop{\circ}^{_2}$}
\hglue -4pt\hrulefill\hbox{$....$}\hglue -0pt\hrulefill\lower 2pt\hbox{$\displaystyle\mathop{\circ}$}\hglue -0pt
\hrulefill\lower 2pt\vtop{\offinterlineskip \hbox{$\displaystyle\mathop{\circ}$}
\hbox to 5pt{\hfill \vrule height 6pt width 0,3pt \hfill}
\hbox{$\displaystyle\mathop{\circ}_{^n}$} } \hglue -4pt\hrulefill\lower 2pt
\hbox{\hrulefill\hglue -4.5pt$\displaystyle\mathop{\circ}^{_{n-1}}$} }\\
\hline
& \\
$D_{2n,2}$, $_{{n\geq 2}}$ &  
\hbox to 4cm{\lower 2pt \hbox{$\displaystyle \mathop{\circ}^{_1}$}
\hglue -4pt\hrulefill\lower 2pt
\hbox{$\displaystyle\mathop{\circ}^{_2}$}
\hglue -4pt\hrulefill\hbox{$....$}\hglue -0pt\hrulefill\lower 2pt\hbox{$\displaystyle\mathop{\circ}$}\hglue -0pt
\hrulefill\lower 2pt\vtop{\offinterlineskip \hbox to 8pt{\hfill$\displaystyle\mathop{\circ}$\hfill}
\hbox to 8pt{\hfill\vrule height 9pt width 0,3pt \hfill}
\hbox{$\displaystyle\mathop{\bullet}_{^{2n}}\;$} } \hglue -9pt\hrulefill\lower 2pt
\hbox{\hrulefill\hglue -6pt$\displaystyle\mathop{\circ}^{_{2n-1}}$} }\\
 \hline
 & \\
 $E_7$ &  
\hbox to 4cm{\lower 2pt\hbox{$\displaystyle\mathop{\circ}^{_1}$}
\hglue -4pt\hrulefill\lower 2pt\hbox{$\displaystyle
\mathop{\circ}^{_3}$} \hglue -4pt\hrulefill\lower 2pt
\vtop{\offinterlineskip \hbox{$\displaystyle\mathop{\circ}^{_4}$}
\hbox to 5pt{\hfill\vrule height 10pt width 0,3pt\hfill}
\hbox{$\displaystyle\mathop{\circ}_{^2}$\hfill} }
\hglue -4pt\hrulefill\lower 2pt\hbox{$\displaystyle
\mathop{\circ}^{_5}$} \hglue -4pt\hrulefill\lower 2pt
\hbox{$\displaystyle \mathop{\circ}^{_6}$} \hglue -4pt\hrulefill\lower 2pt
\hbox{$\displaystyle \mathop{\bullet}^{_7}$}}
 \\
\hline
\end{tabular}
\end{center}

\medskip

Let us assume from now on that $(I,J)$ is a $C-$admissible pair and let $I':=I\setminus J$. For $k\in I'$, let $(E_k,H_k,F_k)$ be an $\mathfrak{sl}_2-$triple associated to the irreducible $C-$admissible pair $(I_k,J_k)$.  

\begin{lem}\label{ljc} Let $k\not=l\in I'$, then : \\
1) $\langle \alpha_l, H_k\rangle\in\Zmius$.\\
2) the following assertions are equivalent : 

i) $k,l$ are $J-$connected

ii)  $\langle \alpha_l, H_k\rangle$ is a negative integer 

iii)  $\langle \alpha_k, H_l\rangle$ is a negative integer
\end{lem}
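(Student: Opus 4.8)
The plan is to evaluate $\langle\alpha_l,H_k\rangle$ using the expansion $H_k=\sum_{i\in I_k}n_{i,k}\alpha_i^\vee$ recalled just before the statement, in which every $n_{i,k}$ is a positive integer. Since $\langle\alpha_l,\alpha_i^\vee\rangle=a_{i,l}$ by the realization of $A$, this gives $\langle\alpha_l,H_k\rangle=\sum_{i\in I_k}n_{i,k}\,a_{i,l}$. The observation that makes part 1) immediate is that $l\notin I_k$: by construction $I_k\subset J\cup\{k\}$, whereas $l\in I\setminus J$ and $l\ne k$. Hence every index $i$ in the sum satisfies $i\ne l$, so $a_{i,l}$ is a non-positive integer; as the coefficients $n_{i,k}$ are positive integers, the whole sum is a non-positive integer, i.e.\ it lies in $\Z^-$.

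For part 2), the same sign analysis shows that $\langle\alpha_l,H_k\rangle$ is a \emph{negative} integer if and only if $a_{i,l}\ne 0$ for at least one $i\in I_k$, i.e.\ if and only if some vertex of $I_k$ is joined to $l$ in the Dynkin diagram. So the heart of the proof is to show that this last condition is equivalent to $k$ and $l$ being $J$-connected in the sense of Definition \ref{jc}. If some $i\in I_k$ is joined to $l$, I would choose a shortest path inside the connected diagram $I_k$ from $k$ to $i$; minimality forces all of its vertices other than $k$ to avoid $k$, hence to lie in $I_k\setminus\{k\}=J_k\subset J$, and appending $l$ produces exactly a $J$-connecting chain as in Definition \ref{jc} (the degenerate case $i=k$ giving the length-zero chain $k,l$). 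Conversely, if $k$ and $l$ are $J$-connected through $j_0=k,j_1,\dots,j_p\in J,\ j_{p+1}=l$, then $j_0,\dots,j_p$ spans a connected subdiagram of $J\cup\{k\}$ containing $k$, hence is contained in $I_k$; in particular $j_p\in I_k$ and $a_{j_p,l}=a_{j_p,j_{p+1}}\ne 0$, so one may take $i:=j_p$.

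This establishes (i) $\Leftrightarrow$ (ii). Exchanging the roles of $k$ and $l$ gives (i) $\Leftrightarrow$ (iii), using that ``$J$-connected'' is a symmetric relation (Remark \ref{rjc}, which rests on the symmetry of the vanishing of the entries $a_{i,j}$), so that condition (i) is unchanged under this exchange. I expect the only mildly delicate points to be the degenerate cases ($i=k$, respectively $p=0$) in the combinatorial equivalence, and checking that the shortest-path argument genuinely keeps every interior vertex inside $J$; everything else reduces to the one-line sign computation above.
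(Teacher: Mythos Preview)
Your proof is correct and follows essentially the same approach as the paper: expand $H_k=\sum_{i\in I_k}n_{i,k}\alpha_i^\vee$, observe $l\notin I_k$ so every $a_{i,l}\le 0$, and then reduce (i)$\Leftrightarrow$(ii) to the combinatorial statement that $k$ and $l$ are $J$-connected iff $l$ is adjacent to some vertex of $I_k$, using symmetry (Remark \ref{rjc}) for (iii). The paper is terser on the combinatorial equivalence, simply invoking that $I_k$ is the connected component of $J\cup\{k\}$ containing $k$; your shortest-path argument just makes this explicit.
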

\begin{proof} $\,$\\
1) Recall that   $H_k=\sum_{i\in I_k}n_{i,k}\alpha\check{_i}$, where $n_{i,k}$ are positive integers. As $l\notin I_k$, we have that  $\langle \alpha_l, H_k\rangle=\sum_{i\in I_k}n_{i,k}\langle \alpha_l, \alpha\check{_i}\rangle\in\Zmius$.\\
2) In view of Remark \ref{rjc}, it suffices to prove the equivalence between i) and ii).  Since $I_k$ is the connected component of $J\cup\{k\}$ containing $k$, the assertion i)  is equivalent to say that the vertex $l$ is connected to $I_k$, so there exists $i_k\in I_k$  such that  $\langle \alpha_l, \alpha\check{_{i_k}}\rangle <0$ and hence $\langle \alpha_l, H_k\rangle <0$.
\end{proof}

\begin{prop}\label{AJhJ} Let $\mathfrak{h}^J=\Pi_J^{\perp}=\{h\in\mathfrak{h}, \langle\alpha_j,h\rangle=0, \; \forall j\in J\}$. For $k\in I'$, denote by $\alpha_k'=\alpha_k/\mathfrak{h}^J$ the restriction of $ \alpha_k$ to the subspace $\mathfrak{h}^J$ of $\mathfrak{h}$, and $\Pi^J=\{\alpha'_k; \; k\in I'\}$, $\Pi^{J\vee}=\{H_k; \; k\in I'\}$. For $k,l\in I'$, put $a'_{k,l} =\langle \alpha_l, H_k\rangle$  and  $A^J=(a'_{k,l})_{k,l\in I'}$. Then $A^J$ is an indecomposable and symmetrizable generalized Cartan matrix,  $(\mathfrak{h}^J, \Pi^J, \Pi^{J\vee})$ is a realization of $A^J$ and corank$(A^J)=$ corank$(A)$.
\end{prop}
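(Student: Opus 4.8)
The plan is to verify the three defining properties of a generalized Cartan matrix for $A^J=(a'_{k,l})_{k,l\in I'}$ with $a'_{k,l}=\langle\alpha_l,H_k\rangle$, then check that $(\mathfrak h^J,\Pi^J,\Pi^{J\vee})$ is a realization, and finally handle indecomposability, symmetrizability and the corank equality. First I would record the easy entries: $a'_{k,k}=\langle\alpha_k,H_k\rangle=2$ by the defining property of $H_k$ (namely $\langle\alpha_i,H_k\rangle=2\delta_{i,k}$ for $i\in I_k$, and $k\in I_k$), giving condition (1). For $k\neq l$, Lemma \ref{ljc}(1) gives $a'_{k,l}=\langle\alpha_l,H_k\rangle\in\Z^-$, i.e. $a'_{k,l}\leq 0$, which is condition (2). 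For condition (3), suppose $a'_{k,l}=0$; then $k$ and $l$ are not $J$-connected by Lemma \ref{ljc}(2) (equivalence of (i) and (ii)), hence by the equivalence of (i) and (iii) we also get $\langle\alpha_k,H_l\rangle=0$, i.e. $a'_{l,k}=0$. So $A^J$ is a generalized Cartan matrix.

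Next I would show $(\mathfrak h^J,\Pi^J,\Pi^{J\vee})$ is a realization of $A^J$. The pairing condition is immediate: $\langle\alpha'_l,H_k\rangle=\langle\alpha_l,H_k\rangle=a'_{k,l}$ since each $H_k$ lies in $\mathfrak h_{I_k}=\sum_{i\in I_k}\Complex\alpha_i^\vee\subset\mathfrak h'$, and one must first check $H_k\in\mathfrak h^J$: indeed for $j\in J$, if $j\notin I_k$ then $\langle\alpha_j,\alpha_i^\vee\rangle$ summed against the $n_{i,k}$ could be nonzero a priori — so I need the sharper fact that $\langle\alpha_j,H_k\rangle=0$ for all $j\in J$, which holds because for $j\in I_k\cap J=J_k$ we have $\langle\alpha_j,H_k\rangle=2\delta_{j,k}=0$ (as $j\neq k$), and for $j\in J\setminus I_k$ the vertex $j$ is in a different connected component of $J\cup\{k\}$, so $a_{j,i}=0$ for every $i\in I_k$, whence $\langle\alpha_j,H_k\rangle=\sum_{i\in I_k}n_{i,k}a_{i,j}=0$ using the symmetry $a_{i,j}=0\Leftrightarrow a_{j,i}=0$. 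Thus $\Pi^{J\vee}\subset\mathfrak h^J$. Linear independence of $\Pi^{J\vee}=\{H_k\}$ follows because the $H_k$ involve disjoint supports $I_k$ (distinct $k,l\in I'$ give $I_k\cap I_l\cap I'=\emptyset$, and each $H_k$ has nonzero coefficient exactly on a set containing $k$) — more carefully, projecting onto the coordinate $\alpha_k^\vee$ kills all $H_l$ with $l\neq k$ since $k\notin I_l$. Linear independence of $\Pi^J=\{\alpha'_k\}$ in $(\mathfrak h^J)^*$: the restrictions $\alpha_k|_{\mathfrak h^J}$ are independent because $\mathfrak h^J$ is a complement to $\mathfrak h_J'+\ldots$ issues aside, one can pair against the $H_l$ — if $\sum c_k\alpha'_k=0$ on $\mathfrak h^J$ then evaluating on $H_l$ gives $\sum_k c_k a'_{l,k}=0$ for all $l$, and since $A^J$ has the same rank as... — here I would instead argue directly that $\dim\mathfrak h^J=|I|+\mathrm{corank}(A)-|J|=|I'|+\mathrm{corank}(A)$ and that the number of independent relations matches, using $\mathrm{corank}(A_J)=0$ since $J$ is of finite type. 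Finally, the realization axiom requires $\dim\mathfrak h^J=|I'|+\mathrm{corank}(A^J)$; combined with the computation $\dim\mathfrak h^J=|I'|+\mathrm{corank}(A)$ this forces $\mathrm{corank}(A^J)=\mathrm{corank}(A)$, which also gives the last assertion. Indecomposability of $A^J$ follows from Remark \ref{rjc}: any $i,k\in I'$ are linked by a chain of pairwise $J$-connected vertices, and $J$-connected vertices have nonzero $a'$-entry by Lemma \ref{ljc}(2), so the Dynkin diagram of $A^J$ is connected.

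For symmetrizability, I would exhibit the diagonal matrix explicitly. Since $A$ is symmetrizable with $DA$ symmetric, $D=\mathrm{diag}(d_i)$, and recalling $d_i$ is proportional to $(\alpha_i^\vee,\alpha_i^\vee)/2$ via the invariant form, I expect $A^J$ to be symmetrized by $D'=\mathrm{diag}(d'_k)_{k\in I'}$ with $d'_k$ proportional to $(H_k,H_k)$ (the value of the invariant form of $\mathfrak g$ on $H_k\in\mathfrak h$). Concretely one checks $d'_k a'_{k,l}=d'_l a'_{l,k}$: writing $a'_{k,l}=\langle\alpha_l,H_k\rangle$ and using $\nu(H_k)=\sum_i n_{i,k}\nu(\alpha_i^\vee)$ together with the fact that $(H_k,h)$ is proportional to $\langle\beta_k,h\rangle$ for a suitable weight $\beta_k$ — actually the cleanest route is: $(H_k,H_l)=\langle$ something $\rangle$; by invariance $d'_k\langle\alpha_l,H_k\rangle$ and $d'_l\langle\alpha_k,H_l\rangle$ both equal (up to the global constant) $(\nu^{-1}(\alpha_l\text{-part}),\nu^{-1}(\alpha_k\text{-part}))$-type expressions that are manifestly symmetric in $k,l$. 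I expect this symmetry computation — reconciling the normalization of $H_k$ inside $\mathfrak g(I_k)$ with the invariant form of the big algebra $\mathfrak g$ — to be the main obstacle, since it is where the $C$-admissibility (not just admissibility) of the pairs $(I_k,J_k)$ must be used, or at least the precise formula $H_k=\sum n_{i,k}\alpha_i^\vee$ with its interaction with the bilinear form. Everything else is bookkeeping with connected components of $J\cup\{k\}$ and the elementary properties of realizations.
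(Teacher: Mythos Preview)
Your handling of the generalized Cartan matrix axioms and indecomposability matches the paper's and is correct. Two points deserve comment.

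\textbf{The corank/dimension step is circular.} You write that ``the realization axiom requires $\dim\mathfrak h^J=|I'|+\mathrm{corank}(A^J)$; combined with $\dim\mathfrak h^J=|I'|+\mathrm{corank}(A)$ this forces $\mathrm{corank}(A^J)=\mathrm{corank}(A)$.'' But the dimension condition is precisely what must be \emph{verified} to conclude that $(\mathfrak h^J,\Pi^J,\Pi^{J\vee})$ is a realization; you cannot use it to deduce the corank equality. You need an independent computation of $\mathrm{corank}(A^J)$. The paper supplies this as follows: using the orthogonal decomposition $\mathfrak h=\mathfrak h^J\oplus^\perp\mathfrak h_J$ (valid because $J$ is of finite type, so $(.\,,.)|_{\mathfrak h_J}$ is nondegenerate) one gets $\mathfrak h'=(\mathfrak h'\cap\mathfrak h^J)\oplus\mathfrak h_J$, hence $\dim(\mathfrak h'\cap\mathfrak h^J)=|I'|$ and therefore $\mathfrak h'\cap\mathfrak h^J=\bigoplus_{k\in I'}\Complex H_k$. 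Then $\mathfrak c^J:=\bigcap_{k\in I'}\ker\alpha'_k$ equals $\mathfrak c$ (since $\mathfrak h^J$ already kills all $\alpha_j$, $j\in J$), giving $\mathrm{corank}(A^J)=\dim\mathfrak c=\mathrm{corank}(A)$. Incidentally, linear independence of $\Pi^J$ is immediate (the paper just says ``clearly''): a combination $\sum_{k\in I'}c_k\alpha_k$ vanishes on $\mathfrak h^J=\Pi_J^\perp$ iff it lies in the span of $\Pi_J$, hence $c_k=0$ by independence of $\Pi$; your detour through pairing with the $H_l$ is unnecessary.

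\textbf{Symmetrizability: a genuinely different route.} Your proposal to symmetrize via $d'_k=(H_k,H_k)$ is correct and in fact simpler than the paper's argument, and it does \emph{not} require $C$-admissibility (only admissibility). The clean computation is: for $h\in\mathfrak h^J$ one has $(H_k,h)=\sum_{i\in I_k}n_{i,k}\,\tfrac{(\alpha_i^\vee,\alpha_i^\vee)}{2}\langle\alpha_i,h\rangle=n_{k,k}d_k\langle\alpha_k,h\rangle$, since $\langle\alpha_j,h\rangle=0$ for all $j\in J_k$. Taking $h=H_l$ and using symmetry of $(.\,,.)$ gives $n_{k,k}d_k\,a'_{l,k}=n_{l,l}d_l\,a'_{k,l}$, and $(H_k,H_k)=2n_{k,k}d_k>0$. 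The paper instead proves an auxiliary lemma: the longest element $w_k^J$ of $W(I_k)$ stabilizes $\mathfrak h^J$ and induces the fundamental reflection $R_k^J$ there (checked from the list of irreducible $C$-admissible pairs). Hence $(.\,,.)^J$ is $W^J$-invariant, and the relation $(H_k,H_l)^J=\tfrac{(H_k,H_k)^J}{2}a'_{l,k}$ follows from $R_k^J$-invariance. Your direct approach avoids the case check; the paper's approach yields, as a by-product, an explicit description of $W^J$ inside $W$.
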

\begin{proof}
The fact that $a'_{k,k}=2$ follows from the definition of $H_k$ for $k\in I'$. If $k\not=l\in I'$, then by lemma \ref{ljc},  $a'_{k,l}\in\Zmius$ and $a'_{k,l}\not=0$ if and only if  $a'_{l,k}\not=0$. Hence $A^J$ is a generalized Cartan matrix. As the matrix $A$ is  indecomposable,  $A_J$ is also indecomposable  (see Remark \ref{rjc}). 
Clearly $\Pi^J=\{\alpha'_k; \; k\in I'\}$ is free in ${\mathfrak{h}^J}^*$ the dual space of $\mathfrak{h}^J$,  $\Pi^{J\vee}=\{H_k; \; k\in I'\}$ is free in ${\mathfrak{h}^J}$ and by construction $\langle \alpha_l, H_k\rangle=a'_{k,l}$, $\forall k,l\in I'$. \\
We have to prove that 
$\dim(\mathfrak{h}^J)-|I'|=corank(A^J)$. As $J$ is of finite type, the restriction of the invariant bilinear form $( . \, , \, .)$ to  ${\mathfrak{h}_J}$ is nondegenerate and  ${\mathfrak{h}_J}$ is contained in   ${\mathfrak{h}'}=\displaystyle\mathop{\oplus}_{i\in I}\Complex \alpha\check{_i}$. Therefore   $${\mathfrak{h}}={\mathfrak{h}^J}\displaystyle\mathop{\oplus}^{\perp}{\mathfrak{h}_J} $$
and $${\mathfrak{h}'}=({\mathfrak{h}'}\cap{\mathfrak{h}^J})\oplus{\mathfrak{h}_J}.$$
It follows that  
$\dim({\mathfrak{h}'}\cap{\mathfrak{h}^J})=|I'|=\dim(\displaystyle\mathop{\oplus}_{k\in I'}\Complex H_k)$. As the subspace $\displaystyle\mathop{\oplus}_{k\in I'}\Complex H_k$ is  contained in ${\mathfrak{h}'}\cap{\mathfrak{h}^J}$, we deduce that ${\mathfrak{h}'}\cap{\mathfrak{h}^J}=\displaystyle\mathop{\oplus}_{k\in I'}\Complex H_k$. Note that any supplementary subspace ${\mathfrak{h}^J}''$ of  ${\mathfrak{h}'}\cap{\mathfrak{h}^J}$ in ${\mathfrak{h}^J}$ is also a supplementary of   ${\mathfrak{h}'}$ in ${\mathfrak{h}}$; hence, we have that  $corank(A)=\dim({\mathfrak{h}^J}'')=\dim({\mathfrak{h}^J})-|I'|$. In addition, it is known that $corank(A)=\dim({\mathfrak{c}})$, where ${\mathfrak{c}}=\displaystyle\mathop{\cap}_{i\in I}\ker(\alpha_i)$ is the center of ${\mathfrak{g}}$. Let ${\mathfrak{c}^J}=\displaystyle\mathop{\cap}_{k\in I'}\ker(\alpha_k')$, then  ${\mathfrak{c}^J}={\mathfrak{c}}$ and $corank(A^J)=\dim({\mathfrak{c}^J})=corank(A)=\dim(\mathfrak{h}^J)-|I'|$.\\
It remains to prove that $A^J$ is symmetrizable. 
For $k\in I'$, let $R_k^J$ be the fundamental reflection of $\mathfrak{h}^J$ such that $R_k^J(h)=h-\langle\alpha_k',h\rangle H_k$, $\forall h\in \mathfrak{h}^J$. Let $W^J$ be the Weyl group of $A^J$ generated by $R_k^J$, $k\in I'$.  Let $( . \, , \, .)^J$ be  the restriction to $\mathfrak{h}^J$ of the invariant bilinear form $( . \, , \, .)$ on  $\mathfrak{h}$. Then $( . \, , \, .)^J$ is a 
nondegenerate symmetric bilinear form on  $\mathfrak{h}^J$ which is $W^J-$invariant (see the lemma hereafter). From the  relation $(R_k^J(H_k),R_k^J(H_l))^J= (H_k,H_l)^J$ one can deduce that :  $$(H_k,H_l)^J= \frac{(H_k,H_k)^J}{2}a'_{l,k}, \; \forall k,l\in I',$$
but $(H_k,H_k)^J>0$, $\forall k\in I'$;  hence  $^tA^J$  (and so $A^J$) is symmetrizable. 
\end{proof}
\begin{lem} For $k\in I':=I\setminus J$, let $w_k^J$ be the longest element of the Weyl group $W(I_k)$ generated by the fundamental reflections $r_i$, $i\in I_k$. Then $w_k^J$ stabilizes  $\mathfrak{h}^J$ and induces the fundamental reflection $R_k^J$ of $\mathfrak{h}^J$ associated to $H_k$.
\end{lem}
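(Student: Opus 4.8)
The plan is to identify $w_k^J$, which I abbreviate $w_0$, with $R_k^J$ on $\mathfrak h^J$ by combining two standard properties of longest elements of finite Weyl groups with the combinatorics of the $C-$admissible pair. Since $I_k$ is of finite type, $w_0$ is an involution sending every positive root of $I_k$ to a negative one; hence $-w_0$ stabilizes $\{\alpha_i\mid i\in I_k\}$ and induces on it the opposition involution $\sigma$ of the diagram $I_k$, so that $w_0(\alpha_i)=-\alpha_{\sigma(i)}$ for $i\in I_k$. I will proceed in three steps: (a) $\sigma$ fixes the distinguished vertex $k$; (b) $w_0$ stabilizes $\mathfrak h^J$; (c) $w_0$ restricts to $R_k^J$ on $\mathfrak h^J$.

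Step (a) is where the hypothesis really enters, and I would settle it by inspection of Table 1: in each irreducible $C-$admissible pair the vertex $k$ is fixed by the opposition involution of $I_k$ --- this involution is the identity for types $B_n$, $C_n$ and $E_7$, it has $n$ as its unique fixed vertex for $A_{2n-1}$, and for $D_{n,1}$ and $D_{2n,2}$ the distinguished vertex lies in the $\sigma-$fixed part of the diagram. Consequently $\sigma$ preserves $J_k=I_k\setminus\{k\}$. (One could instead try to prove $w_0(H_k)=-H_k$ directly, noting that $-w_0(H_k)$ is the dominant representative of the $W(I_k)-$orbit of $H_k$, but this does not seem easier than invoking the classification.)

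For step (b), observe that $\mathfrak h^J$ is the annihilator in $\mathfrak h$ of $V:=\sum_{j\in J}\Complex\alpha_j\subset\mathfrak h^*$; since $w_0$ is an involution it suffices to show $w_0(V)\subseteq V$, i.e.\ $w_0(\alpha_j)\in V$ for each $j\in J$. If $j\in J_k$ then $w_0(\alpha_j)=-\alpha_{\sigma(j)}$ with $\sigma(j)\in J_k$ by (a), so $w_0(\alpha_j)\in V$. If $j\in J\setminus J_k$ I claim $a_{i,j}=0$ for all $i\in I_k$: the vertex $j$ is not joined to $k$ (else $j$ would lie in the connected component $I_k$ of $J\cup\{k\}$), and it is not joined to any $i\in J_k$ either, for otherwise $j$ and $i$ would belong to one connected component $C$ of the diagram on $J$, and $C$, being connected and containing $i\in I_k$, would be contained in $I_k\cap J=J_k$, forcing $j\in J_k$. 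Hence $r_i(\alpha_j)=\alpha_j$ for every $i\in I_k$ and therefore $w_0(\alpha_j)=\alpha_j\in V$. Thus $w_0(V)=V$, and passing to annihilators, $w_0(\mathfrak h^J)=\mathfrak h^J$.

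For step (c), recall $H_k\in\mathfrak h^J$ and $\langle\alpha'_k,H_k\rangle=\langle\alpha_k,H_k\rangle=2$, so $\mathfrak h^J=\Complex H_k\oplus\bigl(\mathfrak h^J\cap\ker\alpha_k\bigr)$ and it is enough to compare $w_0$ and $R_k^J$ on the two summands. An element $h\in\mathfrak h^J\cap\ker\alpha_k$ satisfies $\langle\alpha_i,h\rangle=0$ for all $i\in J\cup\{k\}$, in particular for all $i\in I_k$, so $r_i(h)=h$ for $i\in I_k$ and hence $w_0(h)=h$; also $R_k^J(h)=h-\langle\alpha'_k,h\rangle H_k=h$. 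On $\Complex H_k$ we have $R_k^J(H_k)=H_k-2H_k=-H_k$, while for $i\in I_k$, $\langle\alpha_i,w_0(H_k)\rangle=\langle w_0(\alpha_i),H_k\rangle=-\langle\alpha_{\sigma(i)},H_k\rangle=-2\delta_{i,\sigma(k)}=-2\delta_{i,k}$ by (a); since $w_0(H_k)\in\mathfrak h_{I_k}$ and $\{\alpha_i\mid i\in I_k\}$ separates the points of $\mathfrak h_{I_k}$, this gives $w_0(H_k)=-H_k$. Therefore $w_0$ and $R_k^J$ agree on $\mathfrak h^J$, which is the assertion. The main obstacle is step (a) (the fact that $\sigma$ fixes $k$, for which I rely on the classification); the connectivity bookkeeping in step (b) also needs a little care, but everything else is formal.
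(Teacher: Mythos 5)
Your proof is correct and follows essentially the same route as the paper's: inspection of Table 1 to see that the longest element of $W(I_k)$ sends $\alpha_k$ to $-\alpha_k$ and permutes $\Pi_{J_k}$ (while fixing $\alpha_j$ for $j\in J\setminus J_k$), stabilization of $\mathfrak h^J$, the identity $w_k^J(H_k)=-H_k$ via the defining equations of $H_k$, and agreement with $R_k^J$ on the decomposition $\mathfrak h^J=\Complex H_k\oplus\ker(\alpha'_k)$. The only differences are cosmetic: you stabilize $\mathfrak h^J$ as the annihilator of $\mathrm{span}(\Pi_J)$ rather than as the bilinear-form orthogonal of $\mathfrak h_J$, and you spell out the connectivity argument for $j\in J\setminus J_k$ that the paper leaves implicit.
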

\begin{proof} If one looks at the list above of the irreducible $C-$admissible  pairs, one can see that $w_k^J(\alpha_k)=-\alpha_k$ and that 
$-w_k^J$ permutes the $\alpha_j$, $j\in J_k$. In addition,  $w_k^J(\alpha_j)=\alpha_j$, $\forall j\in J\setminus J_k$. Now it's clear that $w_k^J$ stabilizes  $\mathfrak{h}_J$ and hence it stabilizes $\mathfrak{h}^J=\mathfrak{h}_J^{\perp}$. Note that $-w_k^J(H_k)\in \mathfrak{h}_{I_k}$ and  satisfies the same equations defining $H_k$. Hence $-w_k^J(H_k)=H_k=-R_k^J(H_k)$. Clearly $w_k^J$ and $R_k^J$ fix both $\ker(\alpha_k')=\ker(\alpha_k)\cap(\displaystyle\mathop{\cap}_{j\in J}\ker(\alpha_j))$.  As $\mathfrak{h}^J=\ker(\alpha_k')\oplus\Complex H_k$, the reflection $R_k^J$  and $W_k^J$ coincide  on $\mathfrak{h}^J$.
\end{proof}\noindent
\begin{rem}\label{remadm}
Actually we can now rediscover the list of irreducible $C-$admissible pairs given in Remark \ref{eqc}. The black vertex $k$ should be invariant under $-w_k^J$ and the corresponding coefficient of the highest root of $I_k$ should be $1$ (an easy consequence of the definition \ref{defadm} 1) (ii) ).
\end{rem}
\begin{exa} Consider the hyperbolic generalized Cartan matrix $A$ of type $HE_{8}^{(1)}=E_{10}$ indexed by $I=\{-1,0,1,..., 8\}$. \\The  following two choices  for $J$ define   $C-$admissible pairs :  \\
1) $J=\{2,3,4,5\}$. 
\begin{center}
\hskip 3cm \hbox to 4cm{\lower 2pt
\hbox{\hglue -1pt$\displaystyle\mathop{\bullet}^{_1}$}
\hglue -4.5pt\hrulefill\lower 2pt\hbox{\hglue -1pt$\displaystyle
\mathop{\circ}^{_3}$} \hglue -4pt\hrulefill\lower 2pt
\hbox{\hglue -1pt
\vtop{ \offinterlineskip \hbox{$\displaystyle\mathop{\circ}^{_4}$}
\vskip -1pt
\hbox to 5pt{\hfill\vrule height 6pt width 0,3pt\hfill}
\hbox{${\circ} _{^2}$\hfill}
} }
\hglue -12pt\hrulefill\lower 2pt\hbox{\hglue -1pt$\displaystyle
\mathop{\circ}^{_5}$} \hglue -4pt\hrulefill\lower 2pt
\hbox{\hglue -1pt$\displaystyle \mathop{\bullet}^{_6}$}\hglue -1pt\hrulefill\lower 2pt
\hbox{\hglue -1pt$\displaystyle \mathop{\bullet}^{_7}$} \hglue -4pt\hrulefill\lower 2pt
\hbox{\hglue -1pt$\displaystyle \mathop{\bullet}^{_8}$}
\hglue -4pt\hrulefill\
\lower 2pt\hbox{\hglue -4pt$\displaystyle\mathop{\bullet}^{_0}$}
\hglue -4pt\hrulefill\lower 2pt\hbox{\hglue -1pt$\displaystyle{\bullet}^{_{-1}}$}
}
\end{center}
The corresponding generalized Cartan matrix $A^J$ is  hyperbolic of type $HF_{4}^{(1)}$ :
\begin{center}{
 \hskip 3cm\hbox to 4cm{ \offinterlineskip\lower 2pt
\hbox{\hglue -1pt $\displaystyle\mathop{\bullet}_{^1}$}
\hglue -5pt\hrulefill\lower 2pt
\hbox{ \hglue -4pt\hbox{$\displaystyle\mathop{\bullet}_{^6}$}\hglue -2pt
\vbox{  {
\hrule height 0,3pt width 0,7cm}\vskip 2,5pt{\hrule height 0,3pt width 0,7cm}
\vskip 1pt}\hglue -12pt $<$
\hglue -0.5pt \hbox{$\displaystyle\mathop{\bullet}_{^7}$} }\hglue -5pt
\hrulefill \lower 2pt \hbox{\hglue -1pt $\displaystyle\mathop{\bullet}_{^8}$}\hglue -1pt\hrulefill\lower 2pt\hbox{$\displaystyle\mathop{\bullet}_{^0}$}\hglue -5pt\hrulefill\lower 2pt
\hbox{\hglue -3pt$\displaystyle\mathop{\bullet}_{^{-1}}$} }
}
\end{center}
2) $J=\{1,2,3,4,5,6\}$. 
\begin{center}
\hskip 3cm \hbox to 4cm{\lower 2pt
\hbox{\hglue -1pt$\displaystyle\mathop{\circ}^{_1}$}
\hglue -4.5pt\hrulefill\lower 2pt\hbox{\hglue -1pt$\displaystyle
\mathop{\circ}^{_3}$} \hglue -4pt\hrulefill\lower 2pt
\hbox{\hglue -1pt
\vtop{ \offinterlineskip \hbox{$\displaystyle\mathop{\circ}^{_4}$}
\vskip -1pt
\hbox to 5pt{\hfill\vrule height 6pt width 0,3pt\hfill}
\hbox{${\circ} _{^2}$\hfill}
} }
\hglue -12pt\hrulefill\lower 2pt\hbox{\hglue -1pt$\displaystyle
\mathop{\circ}^{_5}$} \hglue -4pt\hrulefill\lower 2pt
\hbox{\hglue -1pt$\displaystyle \mathop{\circ}^{_6}$}\hglue -1pt\hrulefill\lower 2pt
\hbox{\hglue -1pt$\displaystyle \mathop{\bullet}^{_7}$} \hglue -4pt\hrulefill\lower 2pt
\hbox{\hglue -1pt$\displaystyle \mathop{\bullet}^{_8}$}
\hglue -4pt\hrulefill\
\lower 2pt\hbox{\hglue -4pt$\displaystyle\mathop{\bullet}^{_0}$}
\hglue -4pt\hrulefill\lower 2pt\hbox{\hglue -1pt$\displaystyle{\bullet}^{_{-1}}$}
}
\end{center}
The corresponding generalized Cartan matrix $A^J$ is  hyperbolic of type $HG_{2}^{(1)}$ :
 \begin{center}
\hskip 3cm \hbox to 4cm{
\hbox{$\displaystyle\mathop{\bullet}_{^7}$}\hglue 0pt
\hbox{\offinterlineskip \hglue -2.5pt\vbox{{\hrule height 0,3pt width 1,1cm}\vskip 1,5pt{\hrule height 0,3pt width 1,1cm}\vskip 1,5pt {\hrule height 0,3pt width 1,1cm}\vskip 0.76pt}\hglue -18pt \hbox{$<$ \hglue 4.5pt 
\hbox{$\displaystyle\mathop{\bullet}_{^8}$} } \hglue -9pt\vbox{ {\hrule height 0,3pt width 1,6cm}\vskip 2pt}\hglue -22pt\hbox{$\displaystyle\mathop{\bullet}_{^0}$}\hglue -3pt\vbox{ {\hrule height 0,3pt width 0.9cm}\vskip 2pt}\hglue -3pt \hbox{$\displaystyle\mathop{\bullet}_{^{-1}}$}\hglue -1pt }}
\end{center}

 Note that the first example corresponds to an almost split real form of the Kac-Moody Lie algebra $\mathfrak{g}(A)$ and $A^J$ is the generalized Cartan matrix associated to the corresponding (reduced) restricted root system (see \cite{bm}) whereas the second example does not correspond  to an almost split real form of $\mathfrak{g}(A)$.
 \end{exa}
 \begin{lem}\label{gki}
For $k\in I'$, set $\mathfrak{s}(k)=\Complex E_k\oplus\Complex H_k\oplus\Complex F_k$.  
   Then, the Kac-Moody algebra  $\mathfrak{g}$ is an integrable $\mathfrak{s}(k)-$module via the adjoint representation of $\mathfrak{s}(k)$ on $\mathfrak{g}$.
\end{lem}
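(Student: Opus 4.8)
The plan is to reduce the assertion, via the finite-dimensionality of $\mathfrak{g}(I_k)$, to elementary $\mathfrak{sl}_2$-representation theory applied inside finite-dimensional submodules, so that almost no computation is needed. Recall first that, by the standing hypotheses attached to a $C-$admissible pair, $I_k$ is of finite type; hence $\mathfrak{g}(I_k)$ is a finite-dimensional simple Lie algebra containing the $\mathfrak{sl}_2-$triple $(E_k,H_k,F_k)$, so $\mathfrak{s}(k)\cong\mathfrak{sl}_2$ with $E_k,F_k$ playing the role of its Chevalley generators and $H_k$ its coroot.

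The key step is to show that $\mathfrak{g}$, regarded as a $\mathfrak{g}(I_k)-$module under the adjoint action, is \emph{locally finite}: every element of $\mathfrak{g}$ lies in a finite-dimensional $\mathrm{ad}(\mathfrak{g}(I_k))-$invariant subspace. For this I would invoke that $\mathfrak{g}$ is an integrable module over itself (\cite{vk}, Chap. 3): the operators $\mathrm{ad}(e_i)$ and $\mathrm{ad}(f_i)$ are locally nilpotent on $\mathfrak{g}$ for every $i\in I_k$, and $\mathfrak{g}$ is $\mathrm{ad}(\mathfrak{h})-$diagonalizable. Restricting the action to the finite-dimensional semisimple Lie algebra $\mathfrak{g}(I_k)$, such an integrable module is a direct sum of finite-dimensional irreducible modules — the standard structure theorem for integrable modules over a finite-dimensional semisimple Lie algebra. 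This is where essentially all the content sits.

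Finally, on each finite-dimensional $\mathfrak{g}(I_k)-$submodule $V\subset\mathfrak{g}$, restriction to $\mathfrak{s}(k)$ makes $V$ a finite-dimensional $\mathfrak{sl}_2-$module, so $\mathrm{ad}(E_k)|_V$ and $\mathrm{ad}(F_k)|_V$ are nilpotent. Since $\mathfrak{g}$ is the union of such $V$, the operators $\mathrm{ad}(E_k)$ and $\mathrm{ad}(F_k)$ are locally nilpotent on $\mathfrak{g}$. Together with the fact that $\mathrm{ad}(H_k)$ is diagonalizable on $\mathfrak{g}$ (since $H_k\in\mathfrak{h}_{I_k}\subset\mathfrak{h}$), with integral eigenvalues (since $H_k=\sum_{i\in I_k}n_{i,k}\alpha\check{_i}$ has integer coordinates and roots are integer-valued on coroots), this is precisely the statement that $\mathfrak{g}$ is an integrable $\mathfrak{s}(k)-$module.

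The one subtlety I would flag, and the reason the argument routes through finite-dimensional $\mathfrak{g}(I_k)-$submodules rather than through the root-space description of $E_k$, is that although $E_k$ can be written as a finite sum of real-root vectors (indeed $E_k\in d_{k,1}\subset\mathfrak{n}^+_{I_k}$, using $\langle\beta,H_k\rangle=2$ exactly when the $\alpha_k$-coefficient of the positive root $\beta$ equals $1$), a sum of locally nilpotent operators need not be locally nilpotent; so local nilpotence of $\mathrm{ad}(E_k)$ is not immediate from local nilpotence of $\mathrm{ad}$ on individual real-root spaces, and the passage to finite-dimensional $\mathfrak{g}(I_k)-$modules is what makes it work cleanly.
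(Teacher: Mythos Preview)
Your argument is correct, but it takes a different route from the paper's. The paper does not pass through local finiteness of $\mathfrak g$ as a $\mathfrak g(I_k)$-module. Instead it writes $E_k=\sum_{\alpha\in\Delta_{k,1}} e_\alpha\in d_{k,1}$ with each $e_\alpha$ a real-root vector (so $\mathrm{ad}(e_\alpha)$ is locally nilpotent on $\mathfrak g$), and then uses that $d_{k,1}$ is \emph{commutative}---this is part of the $C$-admissible structure recorded in Remark~\ref{eqc}. A finite sum of pairwise commuting locally nilpotent operators is locally nilpotent, so $\mathrm{ad}(E_k)$ is locally nilpotent; likewise for $F_k$. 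Thus the very subtlety you flag (that a sum of locally nilpotent operators need not be locally nilpotent) is handled in the paper not by passing to finite-dimensional submodules but by exploiting the commutativity of $d_{k,1}$.

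Your approach has the advantage of using only that $I_k$ is of finite type, without invoking the commutativity of $d_{k,1}$; on the other hand it rests on the structural fact that an integrable module over a finite-dimensional semisimple Lie algebra is locally finite (equivalently, a direct sum of finite-dimensional irreducibles), which is true but is a somewhat heavier input than the elementary binomial argument for commuting locally nilpotent operators. The paper's proof is shorter and more self-contained given the prior Remark~\ref{eqc}; yours is more conceptual and would survive in situations where $d_{k,1}$ is not known to be abelian.
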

\begin{proof} Note that $\mathfrak{s}(k)$ is isomorphic to $\mathfrak{sl}_2(\Complex)$ with standard basis $(E_k, H_k, F_k)$. It is clear that  ad$(H_k)$ is diagonalizable on $\mathfrak{g}$ and $E_k=\sum_{\alpha} e_{\alpha}\in d_{k,1}$, where $\alpha$  runs over the set $\Delta_{k,1}=\{\alpha\in \Delta(I_k); \langle\alpha,H_k\rangle=2\}$,  $e_{\alpha}\in \mathfrak{g}_{\alpha}$ for $\alpha\in \Delta(I_k)$, and $d_{k,1}:=\{X\in \mathfrak{g}(I_k) \, ;\; [H_k, X]=2X\}$. Since  $\Delta_{k,1}\subset \Delta^{re}$,  ad($e_{\alpha}$) is locally nilpotent  for $\alpha\in\Delta_{k,1}$. As $d_{k,1}$ is commutative (see Remark \ref{eqc})  we deduce that ad$(E_k)$ is locally nilpotent on $\mathfrak{g}$. The same argument shows that  ad$(F_k)$ is also locally nilpotent. Hence, the Kac-Moody algebra $\mathfrak{g}$ is an integrable  $\mathfrak{s}(k)-$module.
\end{proof}
\begin{prop} \label{gJ} Let
$\mathfrak{g}^J$ be the subalgebra of $\mathfrak{g}$ generated by $\mathfrak{h}^J$ and $E_k, F_k$, $k\in I'$. Then $\mathfrak{g}^J$ is the Kac-Moody Lie algebra associated to the realization  $(\mathfrak{h}^J, \Pi^J, \Pi^{J\vee})$  of the generalized Cartan matrix $A^J$.
\end{prop}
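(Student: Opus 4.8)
\emph{Proof proposal.} The plan is to produce a Lie algebra isomorphism $\mathfrak g(A^J)\xrightarrow{\ \sim\ }\mathfrak g^J$, where $\mathfrak g(A^J)$ is the Kac--Moody algebra attached to the realization $(\mathfrak h^J,\Pi^J,\Pi^{J\vee})$ of the symmetrizable generalized Cartan matrix $A^J$ (this realization and the symmetrizability are furnished by Proposition \ref{AJhJ}). The mechanism is the standard presentation of a symmetrizable Kac--Moody algebra (the Gabber--Kac theorem, cf. \cite{vk}): if $\widetilde{\mathfrak g}(A^J)$ denotes the Lie algebra defined by $\mathfrak h^J$, generators $e_k,f_k$ ($k\in I'$) and the relations of the first kind only, then $\mathfrak g(A^J)=\widetilde{\mathfrak g}(A^J)/\mathfrak r$, where $\mathfrak r$ is the maximal ideal meeting $\mathfrak h^J$ trivially, and for $A^J$ symmetrizable $\mathfrak r$ is exactly the ideal generated by the Serre elements. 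Throughout I would use, from the proof of Lemma \ref{gki}, that $E_k\in d_{k,1}=\bigoplus_{\alpha\in\Delta_{k,1}}\mathfrak g_\alpha$ and $F_k\in d_{k,-1}=\bigoplus_{\alpha\in\Delta_{k,1}}\mathfrak g_{-\alpha}$, and that every $\alpha\in\Delta_{k,1}$ has $\alpha_k$-coefficient exactly $1$ (forced by $\langle\alpha,H_k\rangle=2$), all other simple roots in its support lying in $J_k$.

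\textbf{Step 1 (relations of the first kind).} First I would check that $\mathfrak h^J$ and the $E_k,F_k,H_k:=[E_k,F_k]$ satisfy the Chevalley relations for $(\mathfrak h^J,\Pi^J,\Pi^{J\vee})$: $[\mathfrak h^J,\mathfrak h^J]=0$ is clear; for $h\in\mathfrak h^J$ one has $[h,E_k]=\langle\alpha'_k,h\rangle E_k$ and $[h,F_k]=-\langle\alpha'_k,h\rangle F_k$ because each $\alpha\in\Delta_{k,1}$ restricts to $\alpha'_k$ on $\mathfrak h^J$ (the correction term lies in $\sum_{j\in J}\Z\alpha_j$, which kills $\mathfrak h^J$); $[E_k,F_k]=H_k\in\Pi^{J\vee}\subset\mathfrak h^J$ is the $\mathfrak{sl}_2$-triple relation together with Proposition \ref{AJhJ}; and for $k\neq l$, $[E_k,F_l]\in\bigoplus_{\alpha\in\Delta_{k,1},\beta\in\Delta_{l,1}}\mathfrak g_{\alpha-\beta}=0$, since $\alpha-\beta$ has strictly positive $\alpha_k$-coefficient and strictly negative $\alpha_l$-coefficient (as $k\notin I_l$, $l\notin I_k$), hence is not a root. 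This yields a homomorphism $\widetilde\phi\colon\widetilde{\mathfrak g}(A^J)\to\mathfrak g$ with image exactly $\mathfrak g^J$ and with $\widetilde\phi|_{\mathfrak h^J}$ injective.

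\textbf{Step 2 (Serre relations).} This is the part I expect to be delicate. Fix $k\neq l$ in $I'$. The crucial point is that $d_{l,1}$ sits inside a single $\mathrm{ad}(H_k)$-eigenspace: writing $\beta=\alpha_l+\gamma\in\Delta_{l,1}$ with $\gamma\in\sum_{j\in J_l}\Z_{\ge0}\alpha_j$, I claim $\langle\gamma,H_k\rangle=0$. Indeed, for $j\in J_l\cap I_k$ one has $\langle\alpha_j,H_k\rangle=2\delta_{j,k}=0$ by the defining property of $H_k$ (and $j\neq k$ since $j\in J$), while for $j\in J_l\setminus I_k$ no $i\in I_k$ is joined to $j$ in the Dynkin diagram --- otherwise $j$, lying in $J$, would belong to the connected component $I_k$ of $J\cup\{k\}$ --- so again $\langle\alpha_j,H_k\rangle=0$. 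Hence $\langle\beta,H_k\rangle=\langle\alpha_l,H_k\rangle=a'_{k,l}$ for all $\beta\in\Delta_{l,1}$, so $E_l$ is an $\mathrm{ad}(H_k)$-eigenvector of eigenvalue $a'_{k,l}\in\Z_{\le0}$ (Lemma \ref{ljc}), and the sign argument of Step 1 gives $[F_k,E_l]=0$. Since $\mathfrak g$ is an integrable $\mathfrak s(k)\cong\mathfrak{sl}_2$-module (Lemma \ref{gki}), $E_l$ generates a finite-dimensional submodule in which, being annihilated by $\mathrm{ad}(F_k)$, it is a lowest weight vector of weight $a'_{k,l}\le 0$; the $\mathfrak{sl}_2$-theory then gives $(\mathrm{ad}\,E_k)^{1-a'_{k,l}}(E_l)=0$, and symmetrically $(\mathrm{ad}\,F_k)^{1-a'_{k,l}}(F_l)=0$.

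\textbf{Step 3 (conclusion).} By Step 2 the Serre elements lie in $\ker\widetilde\phi$, hence $\mathfrak r\subseteq\ker\widetilde\phi$; by Step 1, $\ker\widetilde\phi\cap\mathfrak h^J=0$, so $\ker\widetilde\phi\subseteq\mathfrak r$ by maximality of $\mathfrak r$. Therefore $\ker\widetilde\phi=\mathfrak r$, and $\widetilde\phi$ induces an isomorphism $\mathfrak g(A^J)=\widetilde{\mathfrak g}(A^J)/\mathfrak r\xrightarrow{\ \sim\ }\mathfrak g^J$. Steps 1 and 3 are routine once the Gabber--Kac framework is invoked; the real work is the combinatorial computation in Step 2 showing that the correction roots pair trivially with $H_k$, together with the $\mathfrak{sl}_2$-weight bookkeeping that turns this into the precise Serre exponent $1-a'_{k,l}$.
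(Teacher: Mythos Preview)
Your proof is correct and follows essentially the same route as the paper: verify the relations of the first kind, establish the Serre relations via the $\mathfrak{sl}_2$-representation theory applied to the integrable $\mathfrak s(k)$-module $\mathfrak g$ (Lemma \ref{gki}), and conclude by the simplicity/maximal-ideal argument (the paper cites \cite[1.7]{vk}, you phrase it via Gabber--Kac). The only substantive difference is that the paper asserts $[H_k,F_l]=-a'_{k,l}F_l$ without comment, whereas your Step~2 actually proves the needed fact that every $\beta\in\Delta_{l,1}$ satisfies $\langle\beta,H_k\rangle=\langle\alpha_l,H_k\rangle$ by the case analysis on $J_l\cap I_k$ versus $J_l\setminus I_k$; this fills in a detail the paper leaves implicit.
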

\begin{proof}
It is  not difficult to check that the  following relations hold  in the Lie subalgebra $\mathfrak{g}^J$ : 

$
\begin{array}{lll}
[\mathfrak{h}^J, \mathfrak{h}^J]=0, & [E_k,F_l]=\delta_{k,l}H_k & (k,l \in I');\\
 
[h, E_k] = \langle\alpha_k', h\rangle E_k, & [h, F_k]= -\langle\alpha_k', h\rangle F_k & (h \in \mathfrak{h}^J,  k\in I').\\
 &  &
 \end{array}
$

\noindent We have to prove the Serre's relations : 
$$
\begin{array}{lll}
 (\text{ad}E_k)^{1-a'_{k,l}}(E_l)= 0, & (\text{ad}F_k)^{1-a'_{k,l}}(F_l)= 0  & (k\not= l\in I'). \\
 &  &
\end{array}
$$
\noindent
For $k\in I'$, let $\mathfrak{s}(k)=\Complex F_k\oplus\Complex H_k\oplus\Complex E_k$ be the Lie  subalgebra of $\mathfrak{g}$ isomorphic to $\mathfrak{sl}_2(\Complex)$.  Let $l\not= k\in I'$; note that 
$[H_k, F_l]=-a'_{k,l}F_l$ and  $[E_k,F_l]=0$, which means that $F_l$ is a primitive weight vector for $\mathfrak{s}(k)$.  As $\mathfrak{g}$ is an integrable $\mathfrak{s}(k)-$module (see Lemma \ref{gki}) the primitive weight vector $F_l$  is contained in a finite dimensional $\mathfrak{s}(k)-$submodule (see \cite{vk}; 3.6). The relation  $(\text{ad}F_k)^{1-a'_{k,l}}(F_l)= 0$ follows from the representation theory of  $\mathfrak{sl}_2(\Complex)$ (see \cite{vk}; 3.2).  By similar arguments we prove that $(\text{ad}E_k)^{1-a'_{k,l}}(E_l)= 0$. 

\par Now $\mathfrak{g}^J$ is a quotient of the Kac-Moody algebra associated to $A^J$ and $(\mathfrak{h}^J, \Pi^J, \Pi^{J\vee})$. By \cite[1.7]{vk} it is equal to it.
\end{proof}
\begin{defn} The Kac-Moody Lie algebra $\mathfrak{g}^J$ is called the $C-$admissible  algebra associated to the  $C-$admissible pair 
$(I,J)$.
\end{defn}

\begin{prop}\label{finmult}  The Kac-Moody algebra  $\mathfrak{g}$ is an integrable $\mathfrak{g}^J-$module with finite multiplicities.
\end{prop}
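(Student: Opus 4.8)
The plan is to establish the two claims separately: first that $\mathfrak g$ is an integrable $\mathfrak g^J$-module via the adjoint action, and then that the weight multiplicities (with respect to $\mathfrak h^J$, or equivalently the generalized eigenspace decomposition under $\operatorname{ad}\mathfrak h^J$) are finite. For integrability, I would note that $\mathfrak g$ is generated over $\mathfrak g^J$ in a way controlled by the Chevalley generators $E_k,F_k$ ($k\in I'$) together with $\mathfrak h^J$; since $\operatorname{ad}(E_k)$ and $\operatorname{ad}(F_k)$ are locally nilpotent on $\mathfrak g$ by Lemma \ref{gki} (each $\mathfrak s(k)$ acts integrably on $\mathfrak g$), and $\operatorname{ad}(\mathfrak h^J)$ is diagonalizable on $\mathfrak g$ because $\mathfrak h^J\subset\mathfrak h$ and $\operatorname{ad}\mathfrak h$ is diagonalizable, the standard criterion for integrability of modules over Kac-Moody algebras (Kac \cite{vk}, §3.6) applies: a module on which the Chevalley generators act locally nilpotently and the Cartan acts diagonalizably is integrable.

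For the finite-multiplicity statement, the key point is to compare the weight decomposition of $\mathfrak g$ under $\mathfrak h^J$ with its root-space decomposition under $\mathfrak h$. Write $\mathfrak g=\mathfrak h\oplus\bigoplus_{\alpha\in\Delta}\mathfrak g_\alpha$; restricting weights to $\mathfrak h^J$ groups these root spaces according to the map $\alpha\mapsto\alpha|_{\mathfrak h^J}=\alpha'$. The $\mathfrak h^J$-weight space for a weight $\mu$ is then $\bigoplus_{\alpha|_{\mathfrak h^J}=\mu}\mathfrak g_\alpha$ (plus a piece of $\mathfrak h$ when $\mu=0$), and since each $\mathfrak g_\alpha$ is finite dimensional, finiteness of the multiplicity reduces to showing that for each $\mu$ in the weight lattice the set $\{\alpha\in\Delta\mid \alpha|_{\mathfrak h^J}=\mu\}$ is finite. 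Now $\alpha|_{\mathfrak h^J}=0$ exactly when $\alpha\in\mathfrak h^{J\perp}\cap\mathfrak h^* = \operatorname{span}(\Pi_J)$, i.e.\ when $\alpha\in\Delta_J$; and $\Delta_J$ is finite because $J$ is of finite type. More generally, since $\mathfrak h=\mathfrak h^J\oplus^\perp\mathfrak h_J$ (used in the proof of Proposition \ref{AJhJ}), the fibers of the restriction map on $Q=\Z\Pi$ are cosets of $Q_J:=Q\cap\operatorname{span}(\Pi_J)=\Z\Pi_J$, and $\{\alpha\in\Delta\mid\alpha|_{\mathfrak h^J}=\mu\}$ is contained in a single such coset $\beta+\Z\Pi_J$. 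The claim then follows from the general fact that for any $\beta\in\Delta$ the set $(\beta+\Z\Pi_J)\cap\Delta$ is finite when $J$ is of finite type, which is a standard property of the root string decomposition relative to the finite-type subalgebra $\mathfrak g(J)$: $\mathfrak g$, viewed as a $\mathfrak g(J)$-module, is a direct sum of finite-dimensional modules (integrability again, since $J$ is of finite type so $\mathfrak g(J)$ is finite dimensional semisimple), so every $\mathfrak g(J)$-weight string in $\mathfrak g$ meeting a fixed $\mathfrak h$-weight coset is finite.

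The main obstacle is this last finiteness statement for the $\Z\Pi_J$-cosets in $\Delta$; everything else is a routine application of the integrability criterion and the orthogonal decomposition $\mathfrak h=\mathfrak h^J\oplus^\perp\mathfrak h_J$. I would handle it by invoking that $\mathfrak g(J)$ is a finite-dimensional semisimple Lie algebra (as $J$ is of finite type) acting integrably on $\mathfrak g$, hence $\mathfrak g$ decomposes as a direct sum of finite-dimensional irreducible $\mathfrak g(J)$-modules; each such summand contributes roots only in a bounded $\Z\Pi_J$-translate of a fixed weight, and the number of summands with nonzero $\mathfrak h^J$-weight equal to a fixed $\mu$ is itself finite because the $\mathfrak h$-weights of $\mathfrak g$ lying over $\mu$ form a finite set — indeed $\Delta\cap(\beta+\Z\Pi_J)$ is finite since it is the set of $\mathfrak h$-weights appearing in the (finite-dimensional) $\mathfrak g(J)$-isotypic piece determined by $\beta|_{\mathfrak h^J}$. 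Assembling these observations gives $\dim V_\mu<\infty$ for every $\mathfrak h^J$-weight $\mu$, which is exactly the assertion that $\mathfrak g$ is a $\mathfrak g^J$-module with finite multiplicities.
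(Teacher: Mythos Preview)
Your treatment of integrability is correct and matches the paper's: Lemma \ref{gki} gives local nilpotency of $\operatorname{ad}(E_k)$ and $\operatorname{ad}(F_k)$, and $\mathfrak h^J\subset\mathfrak h$ gives diagonalizability.

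For the finite multiplicities, you correctly reduce to showing that $(\beta+\Z\Pi_J)\cap\Delta$ is finite for each $\beta\in\Delta$, and you correctly observe (as does the paper) that the $I'$-coefficients of any root in this set are fixed. However, your justification of this finiteness is circular. You write that $\Delta\cap(\beta+\Z\Pi_J)$ is the set of $\mathfrak h$-weights appearing in ``the (finite-dimensional) $\mathfrak g(J)$-isotypic piece determined by $\beta|_{\mathfrak h^J}$'', i.e.\ in $V_\mu$. But $V_\mu=\bigoplus_{\gamma:\gamma'=\mu}\mathfrak g_\gamma$, so its finite-dimensionality is precisely the claim to be proved. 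Decomposing $\mathfrak g$ into finite-dimensional $\mathfrak g(J)$-irreducibles is fine, but nothing in your argument bounds the number of irreducibles lying over a given $\mu$; that is the whole content of the statement.

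The paper closes this gap by an explicit estimate. In a fixed coset (with, say, $\beta\in\Delta^+$), choose in each $W_J$-orbit a representative $\beta$ of minimal $J$-height; then $\langle\beta,\alpha_j^\vee\rangle\le 0$ for all $j\in J$. Pairing with $\rho_J^\vee$ (where $\langle\alpha_j,\rho_J^\vee\rangle=1$) yields $\mathrm{ht}_J(\beta)\le -\sum_{k\in I'}n_k\langle\alpha_k,\rho_J^\vee\rangle$, with the $n_k$ the fixed $I'$-coefficients; since $W_J$ is finite this bounds the whole coset. Your module-theoretic approach can be repaired in the same spirit: the $\mathfrak h_J$-weights of $V_\mu$ are bounded below (the $J$-coefficients are $\ge 0$), and integrability makes the weight set $W_J$-stable; applying the longest element of $W_J$ then forces an upper bound, hence finiteness. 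But some such argument must be supplied; as written, the key step is assumed rather than proved.
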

\begin{proof}  The  $\mathfrak{g}^J-$module  $\mathfrak{g}$ is clearly ad$(\mathfrak{h}^J)-$diagonalizable and   ad$(E_k)$, ad$(F_k)$  are locally nilpotent on $\mathfrak{g}$ for $k\in I'$ (see Lemma \ref{gki}).  Hence, $\mathfrak{g}$ is an integrable $\mathfrak{g}^J-$module.
 For $\alpha\in \Delta$, let $\alpha'=\alpha_{\vert\mathfrak{h}^J}$ be the restriction of $\alpha$ to $\mathfrak{h}^J$.  Set $\Delta'=\{\alpha' ; \, \alpha\in\Delta\}\setminus\{0\}$. Then
the set of weights, for the $\mathfrak{g}^J-$module  $\mathfrak{g}$, is exactly $\Delta'\cup\{0\}$. Note that for $\alpha\in \Delta$,  $\alpha'=0$ if and only if $\alpha\in\Delta(J)$. 
In particular, the weight space $V_0 =\mathfrak{h}\oplus(\displaystyle\mathop{\oplus}_{\alpha\in\Delta(J)}\mathfrak{g}_{\alpha})$  corresponding to the null weight  is finite dimensional. Let $\alpha=\displaystyle\sum_{i\in I}n_i\alpha_i\in\Delta$ such that $\alpha'\not=0$. 
We will see that the corresponding weight space $V_{\alpha'}$ is finite dimensional. Note that $V_{\alpha'}= \displaystyle\mathop{\oplus}_{\beta'=\alpha'}\mathfrak{g}_{\beta}$. Let  $\beta=\displaystyle\sum_{i\in I}m_i\alpha_i\in\Delta$ such that $\beta'=\alpha'=\displaystyle\sum_{k\in I'}n_k\alpha_k'$, then $m_k=n_k$, $\forall k\in I'$, since $\Pi^J=\{\alpha'_k, k\in I'\}$ is free in $(\mathfrak{h}^J)^*$. In particular, $\beta$ and $\alpha$ are of the same sign, and we may assume $\alpha\in\Delta^+$. 
Let $ht_J(\beta)=\displaystyle\sum_{j\in J}m_j$ be the height of $\beta$ relative to $J$, and let $W_J$ be the finite subgroup of $W$ generated by $r_j$, $j\in J$.  Since $ W_J$ fixes pointwise $\mathfrak{h}^J$, we deduce that $\gamma'=\beta'$, $\forall \gamma\in W_J\beta$, and so we may assume that $ht_J(\beta)$ is minimal among the roots in $ W_J\beta$. From the inequality $ht_J(\beta)\leq ht_J(r_j(\beta))$, $\forall j\in J$, we get  $\langle\beta,\alpha\check{_j}\rangle\leq 0$, $\forall j\in J$. Let $\rho\check{_J}$ be the half sum of positive coroots of $\Delta(J)$. It is known that $\langle\alpha_j,\rho\check{_J}\rangle=1$, $\forall j\in J$. It follows that $0\geq \langle\beta,\rho\check{_J}\rangle= \displaystyle\sum_{j\in J}m_j+\sum_{k\in I'}n_k\langle\alpha_k,\rho\check{_J}\rangle$,  so finally, we obtain :  $ht_J(\beta)\leq\displaystyle\sum_{k\in I'}-n_k\langle\alpha_k,\rho\check{_j}\rangle$.  Hence, there is just a finite number of possibilities for $\beta$, and then $\alpha'$ is of finite multiplicity. 
\end{proof}

\begin{thm}\label{jgrad} Let $\Delta^J$ be the root system of the pair $(\mathfrak{g}^J,\mathfrak{h}^J)$, then the Kac-Moody Lie algebra $\mathfrak{g}$ is finitely $\Delta^J-$graded, with grading subalgebra $\mathfrak{g}^J$. 
\end{thm}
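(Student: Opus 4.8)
The plan is to verify the three conditions of Definition \ref{DefGrad} for $\mathfrak m=\mathfrak g^J$, $\mathfrak a=\mathfrak h^J$ and $\Sigma=\Delta^J$. Condition (i) is exactly Proposition \ref{gJ}: $\mathfrak g^J$ is a Kac-Moody subalgebra of $\mathfrak g$ whose root system relative to $\mathfrak h^J$ is $\Delta^J$. Since $\mathfrak h^J\subseteq\mathfrak h$, the action of $\mathrm{ad}(\mathfrak h^J)$ on $\mathfrak g$ is diagonalizable, the $\mathfrak h^J$-weight of $\mathfrak g_\alpha$ ($\alpha\in\Delta$) being the restriction $\alpha'=\alpha_{\vert\mathfrak h^J}$; thus the set of $\mathfrak h^J$-weights of $\mathfrak g$ is $\Delta'\cup\{0\}$ with $\Delta'=\{\alpha'\mid\alpha\in\Delta\}\setminus\{0\}$, and by Proposition \ref{finmult} each weight space $V_{\alpha'}=\bigoplus_{\beta'=\alpha'}\mathfrak g_\beta$ is finite dimensional, which gives (iii). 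Condition (ii) amounts to $\Delta'=\Delta^J$ (then $\mathfrak g=\sum_{\alpha\in\Delta^J\cup\{0\}}V_\alpha$); the inclusion $\Delta^J\subseteq\Delta'$ is obvious since $\mathfrak g^J\subseteq\mathfrak g$, so all the work is to prove $\Delta'\subseteq\Delta^J$, i.e. that no nonzero restriction of a root of $\mathfrak g$ escapes $\Delta^J$.

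To prove $\Delta'\subseteq\Delta^J$ I would perform a reduction inside $\mathfrak g$ mirroring the action of $W^J$ on $\mathfrak h^J$. The key preliminary is the lemma following Proposition \ref{AJhJ}: each fundamental reflection $R^J_k$ of $W^J$ is induced by the longest element $w^J_k$ of the finite group $W(I_k)\subseteq W$, which stabilizes $\mathfrak h^J$; hence $(w^J_k\gamma)'=R^J_k(\gamma')$ for all $\gamma\in\mathfrak h^*$, and, as $w^J_k$ alters only the $I_k$-coordinates of a root, it maps $\Delta^+\setminus\Delta(I_k)$ into $\Delta^+$. Given $\mu\in\Delta'$, after replacing $\mu$ by $-\mu$ if needed we may write $\mu=\beta'$ with $\beta\in\Delta^+$, so $\mu\in(\sum_{k\in I'}\Z_{\ge0}\alpha'_k)\setminus\{0\}$. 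I then run the following process on $\beta$: while $\beta$ lies in no $\Delta(I_k)$ and some $k\in I'$ satisfies $\langle\beta',H_k\rangle>0$, replace $\beta$ by $w^J_k\beta$. Each step keeps $\beta$ positive and replaces $\beta'$ by $\beta'-\langle\beta',H_k\rangle\alpha'_k$, strictly decreasing the (positive, integral) sum of the $\Pi^J$-coordinates of $\beta'$, so the process halts.

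At termination, either $\beta\in\Delta(I_m)$ for some $m\in I'$ --- then, as $J_m\subseteq J$, $\beta'$ is an integral multiple of $\alpha'_m$, nonzero because it is $W^J$-conjugate to $\mu$, and $C-$admissibility forces the $\alpha_m$-coefficient of a root of $\mathfrak g(I_m)$ to lie in $\{-1,0,1\}$ (Remark \ref{remadm}), whence $\beta'=\pm\alpha'_m\in\Delta^J$ --- or $\langle\beta',H_k\rangle\le0$ for all $k\in I'$. In the latter case $\beta'$ is a nonzero element of $\sum_{k\in I'}\Z_{\ge0}\alpha'_k$ that is antidominant for every simple coroot $H_k$ of $\mathfrak g^J$, and its support is connected in the Dynkin diagram of $A^J$: the support of the root $\beta$ is connected in the diagram of $A$, and any two of its vertices lying in $I'$ are joined along that support by a path whose interior vertices lie in $J$, hence are $J-$connected and so joined by an edge of $A^J$ (Lemma \ref{ljc}). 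By the description of imaginary roots through the fundamental cone (\cite[Th.~5.4]{vk}), $\beta'$ is then a positive imaginary root of $\mathfrak g^J$. In either case $\beta'\in\Delta^J$, and since $\beta'$ is $W^J$-conjugate to $\mu$ and $\Delta^J$ is $W^J$-stable, $\mu\in\Delta^J$; hence $\Delta'=\Delta^J$ and the theorem follows.

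The main obstacle is precisely this inclusion $\Delta'\subseteq\Delta^J$ --- showing that the $\mathfrak h^J$-weights of $\mathfrak g$ never leave $\Delta^J\cup\{0\}$ --- where the combinatorics of the $C-$admissible pairs (the explicit form of $w^J_k$, the bound on the $\alpha_m$-coefficient in Remark \ref{remadm}) and $J-$connectedness (Remark \ref{rjc}, Lemma \ref{ljc}) enter, together with the standard imaginary-root criterion. The delicate points are the termination of the reduction and the identification of the terminal antidominant weight as an imaginary root through the connectedness of its support; the input (i), (iii) being already in place by Propositions \ref{gJ} and \ref{finmult}.
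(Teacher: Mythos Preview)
Your argument is correct and reaches the same conclusion as the paper, but by a genuinely different route. The paper does not perform any reduction on $\beta$; instead it invokes the axiomatic characterization of a positive root system from \cite[Ex.~5.4]{vk}: it shows that $\Delta'_+$ satisfies (i) $\Pi^J\subset\Delta'_+\subset Q^J_+$ and $2\alpha'_k\notin\Delta'_+$, (ii) the $\alpha'_k$--string property (obtained from the integrability of $\mathfrak g$ as a $\mathfrak g^J$--module, Proposition~\ref{finmult}), and (iii) connectedness of supports, and concludes $\Delta'_+=\Delta^J_+$. Your approach replaces the string argument by a descent along the lifted reflections $w^J_k$ and then splits into the real case (terminal $\beta\in\Delta(I_m)$, handled via Remark~\ref{remadm}) and the imaginary case (antidominant $\beta'$, handled via \cite[Th.~5.4]{vk}). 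This is more hands-on and makes the role of the lift $w^J_k\mapsto R^J_k$ explicit; the paper's method is shorter and reusable (it is invoked again in Proposition~\ref{4.5}). Both proofs share the same two combinatorial inputs: the bound on the $\alpha_m$--coefficient from $C$--admissibility, and Lemma~\ref{ljc} for connectedness.

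One phrasing in your connectedness step is slightly loose: it is not true that \emph{any} two vertices of $\mathrm{supp}(\beta)\cap I'$ are joined by a path in $\mathrm{supp}(\beta)$ whose interior lies entirely in $J$ (such a path may cross other vertices of $I'$). What is true, and what you need, is that the $I'$--vertices encountered along a connecting path in $\mathrm{supp}(\beta)$ give a sequence $k=i_0,i_1,\dots,i_p=l$ in $\mathrm{supp}(\beta')$ with each consecutive pair $J$--connected, hence linked in $A^J$ by Lemma~\ref{ljc}; this is precisely Remark~\ref{rjc}, and it is how the paper argues (iii). With that adjustment your proof is complete.
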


\begin{proof} Let $\Delta'=\{\alpha', \alpha\in\Delta\}\setminus\{0\}$ be the set of nonnull weights of the $\mathfrak{g}^J-$module  $\mathfrak{g}$ relative to $\mathfrak{h}^J$.  Let $\Delta'_+=\{\alpha'\in\Delta', \alpha\in\Delta^+\}$ and $\Delta^J_+$ the set of positive roots of $\Delta^J$ relative to the root basis $\Pi^J$. We have to prove that $\Delta'=\Delta^J$ or equivalently $\Delta'_+=\Delta^J_+$. 
Let $Q^J=\Z \Pi^J$ be the root lattice of $\Delta^J$ and $Q^J_+=\Zplus \Pi^J$. 
It is known that the positive root system $\Delta^J_+$ is uniquely defined by the following  properties (see \cite{vk}, Ex. 5.4) :
\\
(i) $\Pi^J\subset\Delta^J_+\subset Q^J_+$, $2\alpha_i'\notin\Delta^J_+$, $\forall i\in I'$;
\\
(ii) if $\alpha'\in\Delta^J_+$, $\alpha'\not=\alpha'_i$, then the set $\{\alpha'+k\alpha'_i; k\in\Z\}\cap \Delta^J_+$ is a string\goodbreak $\{\alpha'-p\alpha'_i, ...., \alpha'+q\alpha'_i\}$, where $p,q\in\Zplus$ and $p-q=\langle \alpha',H_i\rangle$;
\\
(iii) if $\alpha'\in\Delta^J_+$, then supp$(\alpha')$ is connected.\\
We will see that $\Delta'_+$ satisfies these three properties and hence $\Delta'_+=\Delta^J_+$. Clearly $\Pi^J\subset\Delta'_+\subset Q^J_+$. 
For $\alpha\in \Delta$ and $k\in I'$, the condition $\alpha'\in\Nat\alpha_k$ implies $\alpha\in \Delta(I_k)^+$. As $(I_k,J_k)$ is $C-$admissible for $k\in I'$, the highest root of $\Delta(I_k)^+$ has coefficient 1 on the root $\alpha_k$ (cf. Remark \ref{remadm}).
 It follows that $2\alpha'_k\notin  \Delta'_+$ and (i) is satisfied.  By Proposition \ref{finmult}, $\mathfrak{g}$ is an integrable $\mathfrak{g}^J-$module with finite multiplicities. Hence,  the propriety (ii) follows from [\cite{vk}; prop.3.6]. Let  $\alpha\in\Delta_+$, then supp$(\alpha)$ is connected and supp$(\alpha')\subset$ supp$(\alpha)$.  Let   $k,l\in$ supp$(\alpha')$; if $k,l$ are $J-$ connected in supp$(\alpha)$ relative to the generalized Cartan matrix $A$ (cf. \ref{jc}), then by lemma \ref{ljc}, $k,l$    are linked in $I'$ relative to the generalized Cartan matrix $A^J$. Hence, the connectedness of   supp$(\alpha')$, relative to $A^J$,  follows from that of  supp$(\alpha)$ relative to $A$ (see Remark \ref{rjc}) and (iii) is satisfied. 
\end{proof}

\begin{rem} Note that the definition of $C-$admissible pair can be extended to decomposable Kac-Moody Lie algebras : 
thus if $I^1, I^2, ...., I^m$ are the connected components of $I$ and $J^k=J\cap I^k$,   $k=1,2, ...., m$, then $(I, J)$ is $C-$admissible if and only if $(I^k, J^k)$ is for all  $k=1,2, ...., m$. 
In particular, the corresponding $C-$admissible algebra is $\mathfrak g^J=\displaystyle\mathop{\oplus}_{k=1}^m\mathfrak{g}(I^k)^{J^k}$, where $\mathfrak{g}(I^k)^{J^k}$ is the $C-$admissible subalgebra of $\mathfrak{g}(I^k)$ corresponding to $(I^k, J^k)$,  $k=1,2, ...., m$.

\end{rem}

\section{General  gradations.}\label{GenGrad} $\,$\\
Let $\mathfrak{m}$ be an indecomposable Kac-Moody subalgebra of $\mathfrak{g}$ and let $\mathfrak{a}$ be a Cartan subalgebra of $\mathfrak{m}$. Put $\Sigma=\Delta(\mathfrak{m}, \mathfrak{a})$ the corresponding root system. We assume that $\mathfrak{a}\subset\mathfrak{h}$ and that $\mathfrak{g}$ is finitely $\Sigma-$graded with $\mathfrak{m}$ as grading subalgebra.
Thus $\mathfrak{g}=\displaystyle\sum_{{\gamma}\in\Sigma \cup\{0\}}V_{{\gamma}}$, with $V_{{\gamma}}=\{x\in \mathfrak{g}\, ; \;
[a, x]=\langle \gamma, a\rangle x, \; \forall a \in{\mathfrak{a}}\}$ is finite dimensional for all  ${\gamma}\in\Sigma \cup\{0\}$.
For $\alpha\in\Delta$, denote by $\rho_a(\alpha)$ the restriction of $\alpha$ to $\mathfrak{a}$. As $\mathfrak{g}$ is  $\Sigma-$graded, one has $\rho_a(\Delta\cup\{0\})= \Sigma\cup\{0\}$.
\begin{lem} 
$\,$\\
1) Let $\mathfrak{c}$ be the center of $\mathfrak{g}$ and denote by 
$\mathfrak{c}_a$  the center of $\mathfrak{m}$. Then $\mathfrak{c}_a= \mathfrak{c}\cap\mathfrak{a}$. In particular, if $\mathfrak{g}$ is perfect, then the grading subalgebra $\mathfrak{m}$ is also perfect.\\
2) Suppose that $\Delta^{im}\not=\emptyset$, then   $\rho_a( \Delta^{im})\subset  \Sigma^{im}$.
\end{lem}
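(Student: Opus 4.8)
The plan is to prove the two assertions separately, both resting on the comparison between the $\mathfrak{g}$-invariant form restricted to $\mathfrak{a}$ and the intrinsic structure of $\mathfrak{m}$.

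\textbf{Part 1.} First I would establish $\mathfrak{c}_a = \mathfrak{c}\cap\mathfrak{a}$. The inclusion $\mathfrak{c}\cap\mathfrak{a}\subset\mathfrak{c}_a$ is immediate: if $h\in\mathfrak{a}$ is central in $\mathfrak{g}$ then in particular it commutes with all of $\mathfrak{m}$, so $h\in\mathfrak{c}_a$. For the reverse inclusion, let $h\in\mathfrak{c}_a$, so $\langle\gamma,h\rangle = 0$ for all $\gamma\in\Sigma$ (the center of a Kac-Moody algebra is exactly $\bigcap_{\gamma\in\Pi_a}\ker\gamma$, equivalently $\bigcap_{\gamma\in\Sigma}\ker\gamma$). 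Since $\mathfrak{g}$ is $\Sigma$-graded, $\mathfrak{g}=\sum_{\gamma\in\Sigma\cup\{0\}}V_\gamma$ and $[h,x]=\langle\gamma,h\rangle x = 0$ for $x\in V_\gamma$; hence $[h,\mathfrak{g}]=0$, i.e. $h\in\mathfrak{c}$. Thus $h\in\mathfrak{c}\cap\mathfrak{a}$. For the ``in particular'' clause: if $\mathfrak{g}$ is perfect then $\mathfrak{g}=\mathfrak{g}'$, and by the structure recalled in \ref{kma} this forces $\mathfrak{c}=0$ (a non-perfect Kac-Moody algebra has $\mathfrak{h}''\neq 0$; more directly, $\mathfrak{g}=\mathfrak{g}'$ iff $A$ is non-singular iff $\mathfrak{c}=0$). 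Then $\mathfrak{c}_a = \mathfrak{c}\cap\mathfrak{a} = 0$, so the generalized Cartan matrix of $\mathfrak{m}$ is non-singular, hence $\mathfrak{m}=\mathfrak{m}'$ is perfect.

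\textbf{Part 2.} Now suppose $\Delta^{im}\neq\emptyset$ and take $\alpha\in\Delta^{im}$; I must show $\rho_a(\alpha)\in\Sigma^{im}$, in particular that $\rho_a(\alpha)\neq 0$ and $\rho_a(\alpha)$ is not real. First, $\rho_a(\alpha)\neq 0$: the simplest route is via the Tits cone. Since $\mathfrak{g}$ is not of finite type (it has imaginary roots), by Remark \ref{tc} the closure of the Tits cone is $\bar X = \{h\in\mathfrak{h}_\Real\mid \langle\beta,h\rangle\geq 0,\ \forall\beta\in\Delta^+_{im}\}$, which spans $\mathfrak{h}_\Real$ and has nonempty interior; dually this says $\Delta^+_{im}$ spans $\mathfrak{h}_\Real^*$, hence no nonzero functional — in particular the imaginary root $\alpha$ — can vanish on a subspace containing a regular element. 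More robustly: if $\rho_a(\alpha)=0$ then $\alpha\in\Delta_J$ where $J=\{i\in I\mid\rho_a(\alpha_i)=0\}$ is of finite type (this is part 2 of Theorem 2, but to stay self-contained I would instead argue that $\mathfrak{g}_\alpha\subset V_0$, and $V_0$ is finite-dimensional, while for $\alpha$ imaginary the root string and the ${\rm ad}\,\mathfrak{sl}_2$-analysis force infinitely many roots with vanishing restriction, contradicting $\dim V_0<\infty$). The cleanest version: if $\rho_a(\alpha)=0$ then $\Z\alpha\cap\Delta$ is infinite (imaginary roots come in infinite families $n\alpha$, $n\geq 1$, by \cite{vk} Ch.~5), and all of $\bigoplus_n\mathfrak{g}_{n\alpha}$ lies in $V_0$, contradicting $\dim V_0 <\infty$.

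For the remaining point — that $\rho_a(\alpha)$, known now to be nonzero, lies in $\Sigma^{im}$ and not in $\Sigma^{re}$ — I would argue by contradiction: suppose $\rho_a(\alpha)=\gamma\in\Sigma^{re}$. A real root $\gamma$ of $\mathfrak{m}$ is $W_a$-conjugate to a simple root, and the corresponding $\mathfrak{sl}_2$ acts on $\mathfrak{g}$ integrably (it is generated by Chevalley generators of $\mathfrak{m}\subset\mathfrak{g}$, whose adjoint action is locally nilpotent as in Lemma \ref{gki}); the $\gamma$-weight space of $\mathfrak{g}$ under $\mathfrak{a}$ is $V_\gamma$, finite-dimensional. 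Decomposing $\mathfrak{g}=\bigoplus_\beta\mathfrak{g}_\beta$ under $\mathfrak{h}$ and grouping by restriction to $\mathfrak{a}$, the set $\{\beta\in\Delta\mid\rho_a(\beta)=\gamma\}$ must be finite. But $\alpha$ imaginary means $n\alpha\in\Delta$ for all $n\geq 1$, and I can find arbitrarily long $\mathfrak{a}$-weight strings: applying the $\mathfrak{sl}_2(\gamma)$-theory, the string through $\alpha$ in the direction of (a preimage under $\rho_a$ of) $\gamma$ is bounded by finiteness of weight multiplicities, yet the existence of $n\alpha$ for all $n$ with $\rho_a(n\alpha)=n\gamma$ produces unboundedly many roots; the key tension is that $n\gamma\notin\Sigma$ for $n\geq 2$ (as $\gamma$ is real, $2\gamma\notin\Sigma$), so $\rho_a(n\alpha)=n\gamma\notin\Sigma\cup\{0\}$ for $n\geq 2$ contradicts $\rho_a(\Delta)\subset\Sigma\cup\{0\}$. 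This last observation is in fact the crux and the shortest argument: \emph{$n\gamma\notin\Sigma\cup\{0\}$ for $n\geq 2$ when $\gamma\in\Sigma^{re}$, but $\rho_a(n\alpha)=n\rho_a(\alpha)$ must lie in $\Sigma\cup\{0\}$ since $n\alpha\in\Delta$.} Hence $\rho_a(\alpha)$ cannot be real, and being a nonzero element of $\Sigma$, it is imaginary.

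\textbf{Main obstacle.} The delicate point is ruling out $\rho_a(\alpha)=0$ for imaginary $\alpha$ without circularly invoking the later structural results (Theorem 2); the Tits-cone/infinite-family argument above is what I would use, leaning on the fact that $\dim V_0<\infty$ forces only finitely many roots of $\mathfrak{g}$ to restrict to $0$ on $\mathfrak{a}$, while imaginary roots always generate infinite families inside any such restriction-fiber. Everything else (Part 1, and the ``real $\Rightarrow$ contradiction'' step) is a short formal argument once one notes $\rho_a(n\alpha)=n\rho_a(\alpha)$ and that real roots of a Kac-Moody algebra are non-divisible in the root system.
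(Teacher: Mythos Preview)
Your proposal is correct and, once stripped of the exploratory detours, uses exactly the paper's argument: for Part 1 the $\Sigma$-gradation forces $\mathfrak c_a\subset\mathfrak c$, and perfectness of $\mathfrak g$ gives $\mathfrak c=0$; for Part 2, $\alpha\in\Delta^{im}$ implies $\Nat\alpha\subset\Delta$, so $\rho_a(\alpha)\neq 0$ by finite-dimensionality of $V_0$, and then $\Nat\rho_a(\alpha)\subset\Sigma$ rules out $\rho_a(\alpha)\in\Sigma^{re}$. The paper gives precisely this two-line proof; your Tits-cone and $\mathfrak{sl}_2$-string discussions are unnecessary (and the duality claim ``$\Delta^+_{im}$ spans $\mathfrak h_\Real^*$'' is false in the affine case, though you abandon it anyway).
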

\begin{proof}$\,$\\
1) It is clear that $\mathfrak{c}\cap\mathfrak{a}\subset \mathfrak{c}_a $. Since $\mathfrak{g}$ is $\Sigma-$ graded, we deduce that $\mathfrak{c}_a$ is contained in the center $\mathfrak{c}$ of $\mathfrak{g}$, hence $\mathfrak{c}_a\subset\mathfrak{c}\cap\mathfrak{a}$.
 If $\mathfrak{g}$ is perfect, then $\mathfrak{g}=\mathfrak{g}'$, $\mathfrak{h}=\mathfrak{h}'$, $\mathfrak{c}=\{0\}$; so  $\mathfrak{c}_a=\{0\}$, $\mathfrak{a}=\mathfrak{a}'$ and $\mathfrak{m}=\mathfrak{m}'$.
 \\
2) If $\alpha\in\Delta^{im}$, then $\Nat\alpha\subset\Delta$.  Since $V_0$ is finite dimensional, $\rho_a(\alpha)\not=0$  and $\Nat\rho_a(\alpha)\subset\Sigma$, hence $\rho_a(\alpha)\in\Sigma^{im}$.
\end{proof}
In the following, we will show that the Kac-Moody Lie algebra $\mathfrak{g}$ and the grading subalgebra $\mathfrak{m}$ are of the same type.
\begin{lem}\label{ind} The Kac-Moody Lie algebra $\mathfrak{g}$ is of indefinite type if and only if   $\Delta^{im}$ generates the dual space $(\mathfrak{h}/\mathfrak{c})^*$ of $\mathfrak{h}/\mathfrak{c}$. 
\end{lem}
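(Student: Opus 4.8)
The plan is to characterize the indefinite type among the three mutually exclusive types (finite, affine, indefinite) by the behavior of the imaginary roots inside $\mathfrak{h}^*$ modulo the center. Recall from \ref{kma} that $\mathfrak{c}=\bigcap_{i\in I}\ker\alpha_i$, so that the $\alpha_i$ descend to $(\mathfrak{h}/\mathfrak{c})^*$ and there span a space of dimension $|I|$; thus $Q\otimes\Complex=\Z\Pi\otimes\Complex$ is naturally identified with $(\mathfrak{h}/\mathfrak{c})^*$. Hence the statement ``$\Delta^{im}$ generates $(\mathfrak{h}/\mathfrak{c})^*$'' is the same as ``$\Delta^{im}$ spans the root lattice $Q$ over $\Complex$ (equivalently over $\Rat$).'' First I would dispose of the two non-indefinite cases directly: if $A$ is of finite type then $\Delta^{im}=\emptyset$, which certainly does not span $(\mathfrak{h}/\mathfrak{c})^*$ (a nonzero space since $|I|\geq 1$); if $A$ is of affine type then, by \ref{tc}(3) and the standard structure theory in \cite{vk} (chap.~6), $\Delta^{im}=\{n\delta\mid n\in\Z\setminus\{0\}\}$, so $\Delta^{im}$ spans only the line $\Complex\delta$, whereas $(\mathfrak{h}/\mathfrak{c})^*$ has dimension $|I|\geq 2$. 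This proves the ``only if'' direction by contraposition.

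For the ``if'' direction, assume $A$ is of indefinite type; I must show $\Delta^{im}$ spans $(\mathfrak{h}/\mathfrak{c})^*$. The key input is the description of the imaginary cone: by \cite{vk} (Prop.~5.8 and the surrounding results on the set $K=\{\alpha\in Q^+\setminus\{0\}\mid \alpha \text{ has connected support and } \langle\alpha,\alpha_i^\vee\rangle\leq 0\ \forall i\}$), one has $\Delta^{im}_+\supset K$ and, in the indefinite case, $K$ contains elements $\alpha$ with $(\alpha,\alpha)<0$ and with full support $\mathrm{supp}(\alpha)=I$. More precisely I would argue: since $A$ is indecomposable of indefinite type, there is a positive imaginary root $\beta$ with $\mathrm{supp}(\beta)=I$; then for each $i\in I$ I would produce a second imaginary root so that the differences recover all of $\Pi$. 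A clean way: the $\Rat$-span $V$ of $\Delta^{im}$ in $Q\otimes\Rat$ is $W$-stable (as $W$ preserves $\Delta^{im}$) and contains the nonzero $W$-orbit of some $\beta$; since $A$ is indecomposable and not of finite type, the only proper $W$-stable subspaces of $Q\otimes\Rat$ that could occur are $\{0\}$ and (in the affine case only) $\Complex\delta$, so in the indefinite case $V=Q\otimes\Rat$, i.e. $\Delta^{im}$ spans $(\mathfrak{h}/\mathfrak{c})^*$.

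The main obstacle is making rigorous the claim that in the indecomposable indefinite case the $\Rat$-span of $\Delta^{im}$ is all of $Q\otimes\Rat$; the cheap ``$W$-stable subspace'' argument needs the fact that $W$ acts on $Q\otimes\Rat$ with no invariant subspaces other than those coming from the radical of the bilinear form, which is exactly where indecomposability and non-finiteness enter (cf. \cite{vk}, Ch.~4--5, e.g. the analysis of $\mathfrak{h}^*$ as a $W$-module and Prop.~5.10(b) on $\Delta^{im}$). An alternative, more hands-on route avoiding representation-theoretic inputs: take $\beta\in\Delta^{im}_+$ with $\mathrm{supp}(\beta)=I$ and $(\beta,\beta)<0$ (available since $A$ is indefinite, by \cite{vk} Exercise 5.x / the imaginary cone description), and observe that for each $i$ the element $r_i(\beta)=\beta-\langle\beta,\alpha_i^\vee\rangle\alpha_i$ is again imaginary; if $\langle\beta,\alpha_i^\vee\rangle\neq 0$ for some $i$ we get $\alpha_i\in V$, and then using connectedness of the Dynkin diagram and the relation $a_{ij}\neq 0\Leftrightarrow a_{ji}\neq 0$ one propagates to get every $\alpha_j\in V$; the remaining degenerate case $\langle\beta,\alpha_i^\vee\rangle=0$ for all $i$ would force $\beta$ in the radical, contradicting $(\beta,\beta)<0$ together with the signature constraints, which is handled by choosing $\beta$ in the interior of the imaginary cone. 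I would present whichever of these two arguments is shorter given the conventions already fixed in Section~\ref{s1}, and relegate the structural facts about $\Delta^{im}$ to citations of \cite{vk}.
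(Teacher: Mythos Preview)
Your argument is correct in substance, but two remarks are in order.

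First, you have the labels ``if'' and ``only if'' interchanged: showing that the finite and affine cases fail to span is the contrapositive of ``$\Delta^{im}$ generates $\Rightarrow$ indefinite'', i.e.\ the \emph{if} direction, while the implication ``indefinite $\Rightarrow$ $\Delta^{im}$ generates'' is the \emph{only if} direction.

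Second, for the implication ``indefinite $\Rightarrow$ $\Delta^{im}$ spans'', the paper's route is shorter than either of yours. Rather than taking an arbitrary $\beta\in\Delta^{im}_+$ with full support and then propagating via $W$-stability of the span (which does work, since $V=\langle\Delta^{im}\rangle$ is a $W$-stable subspace and $\alpha_i\in V$, $a_{j,i}\neq 0$ give $r_j(\alpha_i)=\alpha_i-a_{j,i}\alpha_j\in V$, hence $\alpha_j\in V$), the paper invokes directly the existence of a \emph{strictly} imaginary positive root $\alpha$, i.e.\ one with $\langle\alpha,\alpha_i^\vee\rangle<0$ for every $i\in I$ (available in the indefinite indecomposable case, cf.\ \cite{vk}, Theorem~5.6). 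Then for each $i$ the root $r_i(\alpha)=\alpha-\langle\alpha,\alpha_i^\vee\rangle\alpha_i$ is imaginary and differs from $\alpha$ by a nonzero multiple of $\alpha_i$, so every $\alpha_i$ lies in $\langle\Delta^{im}\rangle$ at once, with no propagation or $W$-module analysis needed. Your approach and the paper's are thus the same idea (apply simple reflections to an imaginary root), but choosing $\alpha$ strictly imaginary removes the case analysis you flagged as the ``main obstacle''.
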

\begin{proof}  Note that the root basis $\Pi=\{\alpha_i, i\in I\}$ induces a basis of the vector space $(\mathfrak{h}/\mathfrak{c})^*$. In particular, dim$(\mathfrak{h}/\mathfrak{c})^*\geq 2$ when $\Delta^{im}$ is nonempty. 
Suppose now that $\mathfrak{g}$ is of indefinite type. Let  $\alpha\in \Delta_+^{sim}$ be a positive strictly imaginary root satisfying $\langle\alpha, \alpha\check{_i}\rangle <0$, $\forall i\in I$; then, $r_i(\alpha)= \alpha- \langle\alpha, \alpha\check{_i}\rangle\alpha_i\in\Delta_+^{im}$ for all $i\in I$. In particular, the vector subspace $\langle\Delta^{im}\rangle$  spanned by $\Delta^{im}$ contains $\Pi$ and hence is equal to $(\mathfrak{h}/\mathfrak{c})^*$.  Conversely, if $\Delta^{im}$ generates $(\mathfrak{h}/\mathfrak{c})^*$, then $\Delta^{im}$ 
is nonempty and contains at least two linearly independent imaginary roots;  hence $\Delta$ can not be of finite or affine type.
\end{proof}

\begin{prop}\label{st} $\,$\\
1) The Kac-Moody Lie Algebra $\mathfrak{g}$ and the grading subalgebra $\mathfrak{m}$ are of the same type.\\
2) Suppose $\mathfrak{g}$ of indefinite type and Lorentzian, then $\mathfrak{m}$ is also Lorentzian.
\end{prop}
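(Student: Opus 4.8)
The plan is to establish part 1) by running through the trichotomy finite/affine/indefinite and showing each type is preserved, then to deduce part 2) from a signature argument on the invariant bilinear form. For part 1), first I would handle the finite case: $\mathfrak{g}$ is of finite type if and only if $\Delta$ (equivalently $\Delta^{im}=\emptyset$) is finite, and since $\rho_a$ surjects $\Delta\cup\{0\}$ onto $\Sigma\cup\{0\}$ with $V_0$ finite dimensional (so $\rho_a$ has finite fibres), $\Sigma$ is finite exactly when $\Delta$ is. By the preceding lemma the imaginary root set governs the remaining dichotomy: $\mathfrak{m}$ is of indefinite type iff $\Sigma^{im}$ spans $(\mathfrak{a}/\mathfrak{c}_a)^*$. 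So it remains to compare the affine and indefinite cases, and here I would use Lemma \ref{ind} together with the fact that $\rho_a(\Delta^{im})\subset\Sigma^{im}$ from the lemma just above it.

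The key step is therefore: $\mathfrak{g}$ indefinite $\iff$ $\mathfrak{m}$ indefinite. For the forward direction, assume $\mathfrak{g}$ is indefinite, so by Lemma \ref{ind} $\Delta^{im}$ spans $(\mathfrak{h}/\mathfrak{c})^*$; I want $\Sigma^{im}$ to span $(\mathfrak{a}/\mathfrak{c}_a)^*$. Dually, $\mathfrak{a}/\mathfrak{c}_a$ embeds in $\mathfrak{h}/\mathfrak{c}$ (using $\mathfrak{a}\subset\mathfrak{h}$ and $\mathfrak{c}_a=\mathfrak{c}\cap\mathfrak{a}$ from the lemma), and restriction of linear forms corresponds to $\rho_a$; if some nonzero $\bar a\in\mathfrak{a}/\mathfrak{c}_a$ were killed by all of $\Sigma^{im}=\rho_a(\Delta^{im})$ (here one must check $\rho_a(\Delta^{im})$ is all of $\Sigma^{im}$, or at least spans it — which follows because any imaginary root of $\Sigma$ has a positive multiple that is a weight of the $\mathfrak{g}$-module, pulled back along $\rho_a$), then $\bar a$ would be annihilated by all of $\Delta^{im}\subset(\mathfrak{h}/\mathfrak{c})^*$, forcing $\bar a=0$, a contradiction. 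For the converse, if $\mathfrak{m}$ is indefinite then it is neither finite nor affine, hence (by the finite case already done and by the affine characterization via a unique isotropic imaginary ray) $\mathfrak{g}$ cannot be finite; and $\mathfrak{g}$ cannot be affine either, since an affine $\mathfrak{g}$ has $\Delta^{im}=\Z_{>0}\delta\cup\Z_{<0}\delta$ spanning only a line, so $\rho_a(\Delta^{im})$ spans at most a line in $(\mathfrak{a}/\mathfrak{c}_a)^*$, contradicting that $\Sigma^{im}$ spans the whole dual space via Lemma \ref{ind} applied to $\mathfrak{m}$. This pins down that $\mathfrak{g}$ and $\mathfrak{m}$ share the indefinite type, and combined with the finite case, they share all three types.

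For part 2), suppose $\mathfrak{g}$ is indefinite and Lorentzian, i.e. $A$ is nonsingular and the symmetric bilinear form on $\mathfrak{h}_{\Real}$ (or on $\mathfrak{h}^*_{\Real}$) has signature $(+\cdots+-)$; in particular $\mathfrak{c}=0$ and $\mathfrak{g}=\mathfrak{g}'$, so by the lemma $\mathfrak{m}=\mathfrak{m}'$ and $\mathfrak{c}_a=0$, meaning $A^{\mathfrak{m}}$ (the GCM of $\mathfrak{m}$) is nonsingular. By Corollary \ref{Msym} (cited in the introduction, and which I am permitted to assume) $\mathfrak{m}$ is symmetrizable and the restriction of the invariant form of $\mathfrak{g}$ to $\mathfrak{m}$ is nondegenerate; the invariant form of $\mathfrak{g}$ restricts on $\mathfrak{a}\subset\mathfrak{h}$ to a form of signature no worse than $(+\cdots+-)$ — that is, with at most one negative square — because $\mathfrak{h}_{\Real}$ carries such a signature and a subspace inherits at most one minus. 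Since we just argued $\mathfrak{m}$ is indefinite, its form on $\mathfrak{a}_{\Real}$ is not positive definite, hence has exactly one negative square and is nondegenerate, i.e. signature exactly $(+\cdots+-)$; together with $A^{\mathfrak{m}}$ nonsingular this says $\mathfrak{m}$ is Lorentzian.

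The main obstacle I anticipate is the bookkeeping around $\rho_a$ and the quotients by the centers: one needs $\mathfrak{a}/\mathfrak{c}_a\hookrightarrow\mathfrak{h}/\mathfrak{c}$ to be genuinely injective and compatible with restriction of roots, and one needs that $\rho_a(\Delta^{im})$ spans the same subspace of $(\mathfrak{a}/\mathfrak{c}_a)^*$ as $\Sigma^{im}$ does (not merely that it is contained in $\Sigma^{im}$). Both should follow from Definition \ref{DefGrad}(iii) — finite multiplicities force every imaginary root of $\Sigma$, which generates an infinite string inside $\Sigma$, to be hit by roots of $\Delta$ lying over it — but making this airtight, especially reconciling it cleanly with Lemma \ref{ind}'s formulation in terms of $(\mathfrak{h}/\mathfrak{c})^*$, is where the care is needed.
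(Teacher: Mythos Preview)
Your part 1) takes a different route from the paper. The paper proves each forward implication (finite$\Rightarrow$finite, affine$\Rightarrow$affine, indefinite$\Rightarrow$indefinite) directly and then invokes the trichotomy; you instead prove finite$\Leftrightarrow$finite and indefinite$\Leftrightarrow$indefinite and recover the affine case by exclusion. Your forward implication indefinite$\Rightarrow$indefinite is essentially the paper's argument (the surjective dual map $\bar\pi^*:(\mathfrak h/\mathfrak c)^*\to(\mathfrak a/\mathfrak c_a)^*$ carries the spanning set $\Delta^{im}$ into $\Sigma^{im}$), and your parenthetical worry there is misplaced: for that direction you only need $\rho_a(\Delta^{im})\subset\Sigma^{im}$, not equality of spans. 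The genuine gap is in your reverse implication. To pass from ``$\rho_a(\Delta^{im})$ spans at most a line'' to ``$\Sigma^{im}$ does not span $(\mathfrak a/\mathfrak c_a)^*$'' you would need $\rho_a(\Delta^{im})$ and $\Sigma^{im}$ to have the same span, and your proposed fix (every $\gamma\in\Sigma^{im}$ is hit by $\rho_a$ thanks to surjectivity and finite multiplicities) only produces preimages in $\Delta$, not in $\Delta^{im}$; nothing prevents those preimages from being real. The paper sidesteps all of this with a direct affine argument: if $\mathfrak g$ is affine with lowest positive imaginary root $\delta$, then $\bar\delta:=\rho_a(\delta)\in\Sigma^{im}$, and since $\mathfrak a'=\mathfrak a\cap\mathfrak m'\subset\mathfrak h'$ and $\delta|_{\mathfrak h'}=0$, one gets $\langle\bar\delta,\gamma_s^\vee\rangle=0$ for all $s\in\bar I$, which forces $\mathfrak m$ affine by \cite[Prop.~4.3]{vk}. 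This is both shorter and avoids the spanning issue entirely.

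For part 2) your argument is essentially the paper's, except that you invoke Corollary~\ref{Msym} as a forward reference for the nondegeneracy of the restricted form. The paper instead proves this on the spot: since $A$ is nonsingular $\mathfrak g$ is perfect, hence so is $\mathfrak m$ (by the lemma on centers); as $\dim\mathfrak a\geq 2$ and $(.\,,.)$ has signature $(+\cdots+-)$ on $\mathfrak h_\Real$, the restriction to $\mathfrak a_\Real$ is nonnull, so $\mathfrak m^\perp$ is a proper ideal of $\mathfrak m$ and hence zero by \cite[1.7]{vk}. Your use of Corollary~\ref{Msym} is not circular (its proof does not rely on Proposition~\ref{st}), but the self-contained argument is cleaner and keeps the logical order of the paper intact.
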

\begin{proof}$\,$\\
1) If $\mathfrak{g}$ is of finite type, then $\Delta$ is finite and hence $\Sigma=\rho_a(\Delta)\setminus\{0\}$ is finite. \\
If $\mathfrak{g}$ is  is affine, let $\delta$ be the lowest positive imaginary root. One can choose a root basis  $\Pi_a=\{\gamma_i, i\in \bar I\}$ of $\Sigma$ so that $\bar\delta:=\rho_a(\delta)$ is a positive imaginary root. Note that $\mathfrak{a}':=\mathfrak{a}\cap\mathfrak{m}'\subset \mathfrak{h}'$; in particular $\bar\delta(\mathfrak{a}') =\{0\}$ and $\langle\bar\delta, \gamma\check{_i}\rangle =0$, $\forall i\in \bar I$.  It follows that $\mathfrak{m}$ is  is affine (see [\cite{vk}; Prop. 4.3]).\\
Suppose now that $\mathfrak{g}$ is  is of indefinite type. Thanks to Lemma \ref{ind}, it suffices to prove that $\Sigma^{im}$ generates $(\mathfrak{a}/\mathfrak{c}_a)^*$, where $\mathfrak{c}_a=\mathfrak{c}\cap\mathfrak{a}$ is the center of $\mathfrak{m}$.  The natural homomorphism of vector spaces  $\pi : \mathfrak{a}\rightarrow\mathfrak{h}/\mathfrak{c}$  induces a monomorphism $\bar\pi :  \mathfrak{a}/\mathfrak{c}_a\rightarrow\mathfrak{h}/\mathfrak{c}$. By duality,  the homomorphism ${\bar{\pi}}^* : (\mathfrak{h}/\mathfrak{c})^*\rightarrow (\mathfrak{a}/\mathfrak{c}_a)^*$ is surjective and ${\bar{\pi}}^*(\Delta^{im})\subset \Sigma^{im}$  generates $(\mathfrak{a}/\mathfrak{c}_a)^*$.\\
2) Suppose that  $\mathfrak{g}$ is Lorentzian and let $(.\, , .)$ be an  invariant nondegenerate bilinear form on $\mathfrak{g}$. Then, the restriction
of $(.\, , .)$ to $\mathfrak{h}_{\Real}$ has signature $(++ ....+,-)$ and any maximal totally isotropic subspace of $\mathfrak{h}_{\Real}$ relatively to  $(.\, , .)$ is one dimensional.   
Let $\mathfrak{a}_{\Real}:=\mathfrak{a}\cap\mathfrak{h}_{\Real}$ and let $(.\, , .)_a$ be the restriction of $(.\, , .)$ to $\mathfrak{m}$.  
As $\mathfrak{m}$ is of indefinite type, dim $(\mathfrak{a})\geq 2$ and  the restriction 
of $(.\, , .)_a$ to $\mathfrak{a}_{\Real}$   is nonnull. 
 It follows that the orthogonal subspace $\mathfrak{m}^{\perp}$ of $\mathfrak{m}$  relatively to $(.\, , .)_a$ is a proper ideal of $\mathfrak{m}$. Since $\mathfrak{m}$ is perfect (because $\mathfrak{g}$ is) we deduce that $\mathfrak{m}^{\perp}=\{0\}$ (cf. \cite[1.7]{vk}) 
  and the invariant bilinear form 
$(.\, , .)_a$ is nondegenerate. It follows that  $\mathfrak{m}$ is symmetrizable and  the restriction of $(.\, , .)_a$ to $\mathfrak{a}$ is  nondegenerate. As $\mathfrak{m}$ is of indefinite type, the restriction of $(.\, , .)_a$ to $\mathfrak{a}_{\Real}$ can  not be positive definite.  Hence, the bilinear form  $(.\, , .)_a$ has signature $(++ ....+,-)$ on $\mathfrak{a}_{\Real}$ and then the grading subalgebra $\mathfrak{m}$ is Lorentzian. 
\end{proof}
\begin{defn}
Let $\Pi_a$ be a root basis of $\Sigma$ and let $\Sigma^+$ be the corresponding set of positive roots. The root basis is said to be adapted to the root basis $\Pi$ of $\Delta$ if $\rho_a(\Delta^+)\subset \Sigma^+ \cup\{0\}$. 
\end{defn}
We will see that adapted  root bases always exist.

\begin{defn} (\cite{bp}; 5.2.6) Suppose that $\Delta^{im}\not=\emptyset$. 
Let $\alpha,\beta\in\Delta^{im}$. \\
(i) The imaginary roots  $\alpha$ and $\beta$ are said to be linked if $\Nat\alpha+\Nat\beta\subset\Delta$ or $\beta\in \Rat^+\alpha$.
\\
(ii) The imaginary roots  $\alpha$ and $\beta$ are said to be linkable  if there exists a finite family of imaginary roots $(\beta_i)_{0\leq i\leq n+1}$ such that $\beta_0=\alpha$, $\beta_{n+1}=\beta$ and $\beta_i$ and $\beta_{i+1}$ are linked for all $i=0,1, ...., n$.
\end{defn}
\begin{prop}(\cite{bp}; Prop. 5.2.7)\label{lk} Suppose that $\Delta^{im}\not=\emptyset$. To be linkable is an equivalence relation on  $\Delta^{im}$ and, if $A$ is indecomposable, there exist exactly two equivalence classes :  $\Delta_-^{im}$ and  $\Delta_+^{im}$. 
\end{prop}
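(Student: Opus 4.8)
The plan is to separate the two assertions. That ``to be linkable'' is an equivalence relation is purely formal: the relation ``linked'' is reflexive (since $\alpha\in\Rat^+\alpha$) and symmetric (each of the two defining alternatives, $\Nat\alpha+\Nat\beta\subset\Delta$ and $\beta\in\Rat^+\alpha$, is symmetric in $\alpha$ and $\beta$), so ``linkable'', being by definition the transitive closure of ``linked'', is reflexive, symmetric and transitive. Hence the real work is to show that, when $A$ is indecomposable and $\Delta^{im}\neq\emptyset$ (equivalently $A$ is of affine or indefinite type), there are exactly two classes, namely $\Delta^{im}_+$ and $\Delta^{im}_-$.

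First I would check that the sign of an imaginary root is a linkability invariant; since $\Delta^{im}=\Delta^{im}_+\sqcup\Delta^{im}_-$, this already yields (at least) two classes. Suppose $\alpha\in\Delta^{im}_+$ and $\beta\in\Delta^{im}_-$ are linked. The alternative $\beta\in\Rat^+\alpha$ is impossible for sign reasons, so $n\alpha+m\beta\in\Delta$ for all integers $n,m\geq1$. Writing $\alpha=\sum_ia_i\alpha_i$ and $\beta=\sum_ib_i\alpha_i$ with $a_i\geq0\geq b_i$, the $i$-th coordinate of $n\alpha+m\beta$ equals $na_i+mb_i$; it is always $>0$ on $\mathrm{supp}\,\alpha\setminus\mathrm{supp}\,\beta$, always $<0$ on $\mathrm{supp}\,\beta\setminus\mathrm{supp}\,\alpha$, and on the common support its sign depends on whether $n/m$ exceeds $|b_i|/a_i$ or not. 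Since every Kac--Moody root has all its coordinates of one sign, I would deduce first (taking $n$ large and $m=1$, etc.) that the two difference sets are empty, so $\mathrm{supp}\,\alpha=\mathrm{supp}\,\beta$, and then, letting $n/m$ run over all positive rationals, that all the ratios $|b_i|/a_i$ are equal; hence $\alpha\in\Rat^+(-\beta)$, and a suitable choice of $n,m$ gives $0\in\Delta$, a contradiction.

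Next I would show that $\Delta^{im}_+$ is a single class; applying the involution $\alpha\mapsto-\alpha$, which preserves linkability and exchanges $\Delta^{im}_+$ with $\Delta^{im}_-$, the same follows for $\Delta^{im}_-$, completing the proof. Here I would use Vinberg's fundamental set $K=\{\beta\in\Zplus\Pi\setminus\{0\}\mid\langle\beta,\alpha_i^{\vee}\rangle\leq0\ \forall i\in I,\ \mathrm{supp}\,\beta\text{ connected}\}$, recalling from \cite{vk} (ch.~5) that $K\subset\Delta^{im}_+$, that $\Delta^{im}_+=\bigcup_{w\in W}w(K)$, and that $W$ stabilizes $\Delta^{im}_+$. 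The argument splits in two. (a) $K$ contains an element $\delta^\ast=\sum_iu_i\alpha_i$ of full support $I$: since $A$ is indecomposable of affine or indefinite type, the characterization of such generalized Cartan matrices (\cite{vk}, ch.~4) provides a real vector $u>0$ with $Au\leq0$, and one may take $u$ with positive integer entries (an integral generator of $\ker A$ in the affine case; a rational point of the open set $\{u>0,\ Au<0\}$, then cleared of denominators, in the indefinite case). For any $\beta\in K$, every positive integral combination $m\delta^\ast+n\beta$ again lies in $\Zplus\Pi\setminus\{0\}$, still satisfies $\langle\,\cdot\,,\alpha_i^{\vee}\rangle\leq0$ for all $i$, and has connected support $I$; hence it lies in $K\subset\Delta$, so $\delta^\ast$ is linked to $\beta$, and $K$ lies in a single class. (b) Every $\alpha\in\Delta^{im}_+$ is linkable to an element of $K$: writing $\alpha=w(\beta)$ with $\beta\in K$ and $\ell(w)$ minimal, I would induct on $\ell(w)$; for $\ell(w)\geq1$ I choose a simple reflection $r_i$ with $\ell(r_iw)<\ell(w)$, so that $r_i\alpha=(r_iw)(\beta)$ is, by the induction hypothesis, linkable to an element of $K$, while $\alpha$ itself is linked to $r_i\alpha$. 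The latter is the crux: setting $c=\langle\alpha,\alpha_i^{\vee}\rangle$ (nonzero by minimality), one has $n\alpha+m\,r_i\alpha=(n+m)\alpha-mc\,\alpha_i$ for all $n,m\geq1$, and a short computation shows the integer $-mc$ lies in the range of the $\alpha_i$-root-string through the imaginary root $(n+m)\alpha$ whenever $1\leq m\leq n+m-1$ (the relevant half-length of that string is at least $(n+m)|c|$), so $n\alpha+m\,r_i\alpha$ is a root.

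I expect the main obstacle to be exactly this last point, that $\alpha$ is linked to $r_i\alpha$: it is the mechanism making $\Delta^{im}_+$ connected for linking, and it is a bit delicate, since trying instead to link $\alpha$ directly to $N\alpha+\alpha_i$ fails — $s\alpha+q\alpha_i$ drops out of the relevant root-string as soon as $q$ is large relative to $s$ — whereas for $(n+m)\alpha-mc\,\alpha_i$ the coefficient of $\alpha_i$ grows \emph{proportionally} to the multiple of $\alpha$, matching the linear growth of the root-string length. The sign-invariance step of the second paragraph is the other mildly technical part, but it amounts to bookkeeping once one exploits that Kac--Moody roots are sign-pure and that $n/m$ ranges over all positive rationals.
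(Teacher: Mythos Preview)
The paper does not prove this proposition; it merely quotes it from Bardy-Panse \cite{bp}, Proposition~5.2.7, so there is no ``paper's own proof'' to compare against. Your argument, however, is correct and self-contained: the sign-invariance step is a clean bookkeeping argument using sign-purity of roots, and the connectedness of $\Delta^{im}_+$ via a full-support element of $K$ together with the induction on $\ell(w)$ showing $\alpha$ is linked to $r_i\alpha$ is exactly the right mechanism. In the last step your estimate is fine: since $r_i$ maps the $\alpha_i$-string through $(n+m)\alpha$ onto itself, both $(n+m)\alpha$ and $(n+m)\alpha-(n+m)c\,\alpha_i$ lie in that string, so the half-length on the relevant side is at least $(n+m)|c|\geq m|c|$, forcing $(n+m)\alpha-mc\,\alpha_i\in\Delta$. (Incidentally, even $c=0$ causes no trouble: then $r_i\alpha=\alpha$ and they are trivially linked, though your minimality remark already rules this out.)
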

\begin{lem}\label{pab} Suppose that $\Delta^{im}\not=\emptyset$, then    there exists a root basis of $\Sigma$ such that 
$\rho_a( \Delta_+^{im})\subset  \Sigma_+^{im}$.
\end{lem}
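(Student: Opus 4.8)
The plan is to exploit Proposition~\ref{lk}: the imaginary roots of an indecomposable symmetrizable Kac-Moody algebra split into exactly two ``linkability'' classes, which relative to a fixed root basis are precisely the positive and the negative imaginary roots. I will show that $\rho_a$ respects linkability, so that $\rho_a(\Delta_+^{im})$ is contained in a single linkability class of $\Sigma^{im}$, and then choose the root basis of $\Sigma$ so that this class becomes $\Sigma_+^{im}$. First note that since $\Delta^{im}\neq\emptyset$ and $\rho_a(\Delta^{im})\subset\Sigma^{im}$ (established above), we have $\Sigma^{im}\neq\emptyset$; as $\mathfrak m$ is indecomposable and symmetrizable, Proposition~\ref{lk} applies to $\Sigma$ as well.

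The core step is: if $\alpha,\beta\in\Delta^{im}$ are linked, then $\rho_a(\alpha)$ and $\rho_a(\beta)$ are linked in $\Sigma^{im}$. Indeed, if $\beta\in\Rat^+\alpha$ then $\rho_a(\beta)\in\Rat^+\rho_a(\alpha)$ and we are done, since $\rho_a(\alpha),\rho_a(\beta)$ are nonzero imaginary roots of $\Sigma$. Otherwise $\Nat\alpha+\Nat\beta\subset\Delta$, so for all integers $m,n\geq 1$ we get $m\rho_a(\alpha)+n\rho_a(\beta)=\rho_a(m\alpha+n\beta)\in\Sigma\cup\{0\}$; if none of these vanishes then $\Nat\rho_a(\alpha)+\Nat\rho_a(\beta)\subset\Sigma$ and $\rho_a(\alpha),\rho_a(\beta)$ are linked. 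The one genuinely non-formal point, which I expect to be the main obstacle, is to rule out the degenerate possibility $m_0\rho_a(\alpha)+n_0\rho_a(\beta)=0$ for some $m_0,n_0\geq 1$. Here I set $\gamma:=m_0\alpha+n_0\beta$, which lies in $\Nat\alpha+\Nat\beta\subset\Delta$ (hence $\gamma\neq 0$, as $0\notin\Delta$) and satisfies $\rho_a(\gamma)=0$; then every positive multiple $k\gamma=(km_0)\alpha+(kn_0)\beta$, $k\geq 1$, again lies in $\Nat\alpha+\Nat\beta\subset\Delta$ and has $\rho_a(k\gamma)=k\rho_a(\gamma)=0$. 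Thus the infinitely many distinct nonzero root spaces $\mathfrak g_{k\gamma}$ are all contained in $V_0$, contradicting $\dim V_0<\infty$. This closes the core step.

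Finally, since $\Delta_+^{im}$ is one of the two linkability classes of $\Delta^{im}$, any two of its elements are joined by a chain of linked imaginary roots; applying the core step along such a chain shows that $\rho_a(\Delta_+^{im})$ lies in a single linkability class $C$ of $\Sigma^{im}$. Pick any root basis $\Pi_a^0$ of $\Sigma$; by Proposition~\ref{lk}, $C$ equals $\Sigma_+^{im}$ or $\Sigma_-^{im}$ relative to $\Pi_a^0$. In the first case take $\Pi_a=\Pi_a^0$; in the second case take $\Pi_a=-\Pi_a^0$, which is again a root basis of $\Sigma$ and relative to which the positive imaginary roots are exactly $C$. In either case $\rho_a(\Delta_+^{im})\subset\Sigma_+^{im}$, which is the assertion of the lemma.
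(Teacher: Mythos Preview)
Your proof is correct and follows the same approach as the paper's: show that $\rho_a$ preserves the linkability relation, so that $\rho_a(\Delta_+^{im})$ lands in a single linkability class of $\Sigma^{im}$, then choose the root basis of $\Sigma$ accordingly via Proposition~\ref{lk}. The paper's proof is more terse---it simply asserts that if $\alpha,\beta\in\Delta_+^{im}$ are linkable then so are $\rho_a(\alpha),\rho_a(\beta)$---whereas you carefully verify this and, in particular, rule out the degenerate case $m_0\rho_a(\alpha)+n_0\rho_a(\beta)=0$ using the finite dimensionality of $V_0$; one minor point is that you invoke symmetrizability of $\mathfrak m$ to apply Proposition~\ref{lk}, but only indecomposability is needed (and symmetrizability of $\mathfrak m$ is established only later, in Corollary~\ref{Msym}).
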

\begin{proof}  Let $\alpha,\beta\in \Delta_+^{im}$, then, by  Proposition \ref{lk},  $\alpha$ and $\beta$ are linkable and so are $\rho_a(\alpha)$ and $\rho_a(\beta)$. Since $\Sigma$ is assumed to be indecomposable, $\rho_a(\alpha)$ and $\rho_a(\beta)$  are of the same sign. One can choose a root basis of $\Sigma$ such that  $\rho_a(\alpha)$ and $\rho_a(\beta)$  are positive, and then we have $\rho_a( \Delta_+^{im}) \subset  \Sigma_+^{im}$.  
\end{proof}
\begin{cor} Let $\Pi_a$ be a root basis of $\Sigma$ such that $\rho_a( \Delta_+^{im}) \subset  \Sigma_+^{im}$ and let  $X_a$ be the corresponding positive Tits cone. Then we have  $\bar{X}_a\subset\bar{X}\cap\mathfrak{a}$.
\end{cor}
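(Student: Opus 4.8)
The plan is to obtain the inclusion directly from the description of the closed Tits cones as intersections of half-spaces indexed by the positive imaginary roots, as in Remark~\ref{tc}. First I would dispose of the finite-type case: if $\mathfrak g$ is of finite type, then by Proposition~\ref{st}(1) so is $\mathfrak m$, whence $X=\mathfrak h_\Real$ and $X_a=\mathfrak a_\Real$, and the assertion reduces to $\mathfrak a_\Real\subset\mathfrak h_\Real$. This last containment is part of the setup ($\mathfrak a\subset\mathfrak h$ with compatible real forms); if one wishes to check it, note that for $a\in\mathfrak a_\Real$ and $i\in I$ one has $\langle\alpha_i,a\rangle=\langle\rho_a(\alpha_i),a\rangle$, which is real since $\rho_a(\alpha_i)\in\Sigma\cup\{0\}$ is a $\Z$-combination of the simple roots of $\Sigma$ and these are real-valued on $\mathfrak a_\Real$.

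For the main case, suppose $\mathfrak g$ is not of finite type, so $\Delta^{im}\neq\emptyset$; by Proposition~\ref{st}(1) again $\mathfrak m$ is not of finite type, hence $\Sigma^{im}\neq\emptyset$, and Remark~\ref{tc} applies to both $\mathfrak g$ and $\mathfrak m$, giving
\[\bar X=\{h\in\mathfrak h_\Real\mid\langle\alpha,h\rangle\geq0\ \ \forall\alpha\in\Delta_+^{im}\},\qquad \bar X_a=\{a\in\mathfrak a_\Real\mid\langle\gamma,a\rangle\geq0\ \ \forall\gamma\in\Sigma_+^{im}\}.\]
Then I would take $a\in\bar X_a$: it lies in $\mathfrak a\subset\mathfrak h$ and in $\mathfrak a_\Real\subset\mathfrak h_\Real$, and for every $\alpha\in\Delta_+^{im}$ the standing hypothesis $\rho_a(\Delta_+^{im})\subset\Sigma_+^{im}$ yields $\rho_a(\alpha)\in\Sigma_+^{im}$, so that $\langle\alpha,a\rangle=\langle\rho_a(\alpha),a\rangle\geq0$ because $\rho_a$ is restriction to $\mathfrak a$. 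Hence $a\in\bar X$, and $\bar X_a\subset\bar X\cap\mathfrak a$ follows.

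There is essentially no computation here; the argument is just an assembly of Proposition~\ref{st}, Remark~\ref{tc}, and the hypothesis on $\Pi_a$. The two points requiring care — which I would treat as the only real content — are: not overlooking the finite-type case, where Remark~\ref{tc} is unavailable and one argues instead from $X=\mathfrak h_\Real$, $X_a=\mathfrak a_\Real$; and the step ``$\mathfrak g$ not of finite type $\Rightarrow$ $\mathfrak m$ not of finite type, hence $\Sigma^{im}\neq\emptyset$'', which is precisely Proposition~\ref{st}(1) and is what licenses the use of Remark~\ref{tc} for $\mathfrak m$.
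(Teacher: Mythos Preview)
Your argument is correct and is essentially the paper's own proof: both use Remark~\ref{tc} to describe $\bar X$ via positive imaginary roots and then invoke the hypothesis $\rho_a(\Delta_+^{im})\subset\Sigma_+^{im}$. The paper treats $\Delta^{im}\neq\emptyset$ as a standing assumption inherited from Lemma~\ref{pab} and does not spell out the application of Remark~\ref{tc} to $\mathfrak m$; your explicit handling of the finite-type case and your appeal to Proposition~\ref{st}(1) to justify $\Sigma^{im}\neq\emptyset$ are correct additions but not a different approach.
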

\begin{proof} As $\Delta^{im}\not=\emptyset$, one has $\bar X=\{h\in\mathfrak{h}_{\Real}; \langle \alpha, h\rangle\geq 0, \forall \alpha\in \Delta_+^{im}\}$ (see Remark \ref{tc}). The corollary follows from   Lemma \ref{pab}.
\end{proof}
\begin{lem}\label{int} Suppose that $\Delta^{im}\not=\emptyset$. Let $p\in\bar X$ such that $ \langle \alpha, p\rangle\in\Z$, $\forall \alpha\in\Delta$,  and $ \langle \beta, p\rangle>0$,    $\forall \beta\in \Delta_+^{im}$. Then $p\in\buildrel\circ\over X$.
\end{lem}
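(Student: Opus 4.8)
The plan is to show directly that $p$ lies in the interior $\buildrel\circ\over X$ of the Tits cone, exploiting the convexity of $X$. Since $X$ is convex (this follows at once from the characterisation (1) of $X$ in \S1.5) and has nonempty interior (it contains the open set $\{h\in\mathfrak{h}_{\Real}:\langle\alpha_i,h\rangle>0,\ \forall i\in I\}\ne\emptyset$), one has $\buildrel\circ\over X=\mathrm{int}(X)=\mathrm{int}(\bar X)$; so it suffices to show that $p$ is not a boundary point of the closed convex cone $\bar X$. Assume for contradiction that $p\in\partial\bar X$. By the separation theorem there is a supporting functional $\xi\in\mathfrak{h}_{\Real}^*\setminus\{0\}$ with $\langle\xi,h\rangle\geq 0$ for all $h\in\bar X$ and $\langle\xi,p\rangle=0$. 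Now by Remark \ref{tc} the cone $\bar X=\{h\in\mathfrak{h}_{\Real}:\langle\alpha,h\rangle\geq 0,\ \forall\alpha\in\Delta_{im}^+\}$ is the cone dual to the closure $\bar Z$ of the convex cone generated by $\Delta_{im}^+$; hence, by the bipolar theorem, the condition $\langle\xi,\cdot\rangle\geq 0$ on $\bar X$ is exactly $\xi\in\bar Z\setminus\{0\}$. So one has produced a nonzero $\xi\in\bar Z$ with $\langle\xi,p\rangle=0$.

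Now the hypotheses enter. First, $p$ annihilates no imaginary root: if $\theta\in\Delta_{im}^+$ this is the assumption $\langle\theta,p\rangle>0$, and if $\theta\in\Delta_{im}^-$ then $\langle\theta,p\rangle=-\langle-\theta,p\rangle<0$; in particular $p\ne 0$. Thus the desired contradiction will follow as soon as one knows that the $\xi$ above can be chosen proportional to an actual imaginary root, and this is where the integrality of $p$ is used. In the hyperbolic case it is immediate: by \S1.5(6), $\bar X$ is one of the two nappes of the quadratic cone $\{h\in\mathfrak{h}_{\Real}:(h,h)\leq 0\}$, so $p\in\partial\bar X$ forces $(p,p)=0$; since $A$ is invertible and $\langle\alpha_i,p\rangle\in\Z$ for all $i$, the vector $p$ is a rational combination of the $\alpha_i^\vee$, hence $\nu(p)$ is a rational combination of the $\alpha_i$, so for a suitable integer $N>0$ one has $\theta:=N\nu(p)\in Q\setminus\{0\}$ with $(\theta,\theta)=N^2(p,p)=0$; by \S1.5(6) this $\theta$ is an imaginary root, and $\langle\theta,p\rangle=N(p,p)=0$, contradicting the previous sentence. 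The affine case is quicker still: here $\bar Z=\Real_{\geq 0}\delta$, so $\xi$ is a positive multiple of the imaginary root $\delta$; equivalently, the hypothesis just reads $\langle\delta,p\rangle>0$, whence $p\in\buildrel\circ\over X$ directly by item (3) in \S1.5.

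The main obstacle is the general indefinite case (beyond the affine and hyperbolic ones emphasised in this paper, which are settled above): one must show that the supporting functional $\xi$, a priori merely an element of the closure $\bar Z$ of the imaginary cone, can be taken proportional to an imaginary root. Integrality of $p$ is indispensable here — the conclusion is false for non‑integral $p$, since then $p$ can lie on a boundary ray of $\bar X$ supported along an irrational direction of $\bar Z$ — and the mechanism is that the face of $\bar Z$ annihilated by the rational vector $p$ is cut out by a rational hyperplane; for $\mathfrak g$ hyperbolic or affine the shape of $\bar Z$ forces that face to contain a ray $\Real_{\geq 0}\theta$ with $\theta$ imaginary, which is precisely what the quadratic‑form computation exploits. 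For a general indefinite $\mathfrak g$ one needs the analogous rationality statement about $\bar Z$.
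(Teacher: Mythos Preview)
Your argument is complete in the affine and hyperbolic cases, but --- as you yourself concede in the final paragraph --- it is left unfinished for a general indefinite $\mathfrak g$. The gap is genuine: you need a nonzero element of the face $\{\xi\in\bar Z:\langle\xi,p\rangle=0\}$ that is proportional to an actual imaginary root, and you supply no mechanism to produce one outside the affine and hyperbolic situations. The ``analogous rationality statement about $\bar Z$'' you invoke is neither proved nor cited, and the lemma is stated (and used in Theorem \ref{adap}) for arbitrary indecomposable symmetrizable $\mathfrak g$ with $\Delta^{im}\neq\emptyset$, so the indefinite non-hyperbolic case cannot be skipped.

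The paper's proof avoids this difficulty by taking a different route. Instead of working at the boundary of $\bar X$ via a supporting hyperplane, it first shows that $p$ already lies in $X$ itself (not merely in $\bar X$); this step is where the integrality hypothesis enters, and the argument is borrowed directly from Kac's proof of \cite[Prop.~5.8(c)]{vk}. Once $p\in X$, one uses the $W$-action --- the hypotheses are $W$-invariant since $\Delta_+^{im}$ is --- to move $p$ into the fundamental chamber $C$. There the set $\{\alpha\in\Delta:\langle\alpha,p\rangle=0\}$ is $\Delta_J$ for some $J\subset I$, and the positivity hypothesis on imaginary roots forces $\Delta_J\cap\Delta^{im}=\emptyset$, hence $J$ is of finite type. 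By \S1.5(5) the facet of type $J$ is then a finite-type facet, so $p\in\buildrel\circ\over X$. This argument is uniform over all indefinite types and requires no analysis of the imaginary cone $\bar Z$.
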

\begin{proof} $\,$\\The result is clear when $\Delta$ is of affine type since  $\buildrel\circ\over{X}=\buildrel\circ\over{\bar X} =\{h\in\mathfrak{h}_{\Real}; \langle\delta, h\rangle >0\}$. Suppose now that   $\Delta$ is of indefinite type.  If one looks to the proof of Proposition 5.8.c) in \cite{vk}, one can show that an element $p\in \bar X$ satisfying the conditions of the lemma lies in $X$. As $\Delta_+^{im}$ is $W-$invariant, we may assume that $p$ lies in the fundamental chamber $C$. Hence there exists a subset $J$ of $I$ such that $\{\alpha\in\Delta; \, \langle\alpha,p\rangle=0\}=\Delta_J=\Delta\cap\sum_{j\in J}\Z\alpha_j$. Since $\Delta_J\cap\Delta^{im}=\emptyset$, the root subsystem $\Delta_J$ is of finite type and $p$ lies in the finite type facet  of type $J$.
\end{proof}

\begin{thm}\label{adap} There exists a root basis $\Pi_a$ of $\Sigma$ which is adapted to the root basis $\Pi$ of $\Delta$. 
Moreover, there exists a finite type subset $J$ of $I$ such that $\Delta_J= \{\alpha\in\Delta; \; \rho_a(\alpha)=0\}$.
\end{thm}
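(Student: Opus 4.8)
The plan is to treat the two halves of the statement separately, but using the same key tool: a carefully chosen element $p$ of the closure $\bar X$ of the Tits cone of $\mathfrak g$ whose restriction to $\mathfrak a$ controls the signs of roots. First I would dispose of the easy case $\Delta^{im}=\emptyset$, i.e. $\mathfrak g$ of finite type: then $\Sigma=\rho_a(\Delta)\setminus\{0\}$ is a finite root system, any choice of simple system works after replacing $\Pi_a$ by a $W(\Sigma)$-conjugate, and $J$ is simply $\{\,i\in I\mid \rho_a(\alpha_i)=0\,\}$, which is automatically of finite type. For the main case $\Delta^{im}\neq\emptyset$ I would proceed as follows.

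\emph{Step 1: choice of a linear form.} Pick an element $p\in\mathfrak a_{\Real}=\mathfrak a\cap\mathfrak h_{\Real}$ such that $\langle\gamma,p\rangle>0$ for every $\gamma\in\Sigma^+$ (such $p$ exists: take $p$ in the interior of the positive Weyl chamber of $\mathfrak m$, say $p=\nu_a^{-1}(\rho_a\text{-dominant regular weight})$; since $\Sigma\cup\{0\}$ is exactly the set of $\mathfrak a$-weights of $\mathfrak g$, finiteness of weight multiplicities is not even needed here). By Lemma \ref{pab} we may moreover normalise the root basis $\Pi_a$ of $\Sigma$ so that $\rho_a(\Delta_+^{im})\subset\Sigma_+^{im}$; then $\langle\alpha,p\rangle=\langle\rho_a(\alpha),p\rangle>0$ for all $\alpha\in\Delta_+^{im}$, because $\rho_a(\alpha)\in\Sigma_+^{im}\subset\Sigma^+$. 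By Remark \ref{tc}, $\bar X=\{h\in\mathfrak h_{\Real}\mid\langle\alpha,h\rangle\ge0\ \forall\alpha\in\Delta_+^{im}\}$, so $p\in\bar X$; in fact $p$ lies in the interior of $\bar X$ relative to the directions detected by imaginary roots.

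\emph{Step 2: adaptedness.} I claim $\rho_a(\Delta^+)\subset\Sigma^+\cup\{0\}$. Suppose not: some $\alpha\in\Delta^+$ has $\rho_a(\alpha)=\gamma\in\Sigma^-$. Real roots of $\mathfrak g$ all lie in the Tits cone or its negative, and one checks $\langle\alpha,p\rangle=\langle\gamma,p\rangle<0$; combined with $\langle\beta,p\rangle\ge0$ for all $\beta\in\Delta_+^{im}$ and the description of $\bar X$, this forces $\alpha$ (or rather the half-line it spans against $p$) to produce a contradiction with $p\in\bar X$ — more precisely, after translating $p$ slightly into the open cone one still has $\langle\beta,p\rangle>0$ on $\Delta_+^{im}$ but $\langle\alpha,p\rangle<0$, and then Lemma \ref{int} (the perturbed $p$ can be taken with integral values on $\Delta$, or one argues directly with the Tits cone description (1) of \S1.5) puts $p$ in $\buildrel\circ\over X$, where only finitely many roots are negative on $p$; running this for $p$ and for a sequence of such perturbations contradicts that $\alpha\in\Delta^+$ with $\langle\alpha,p\rangle<0$. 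Hence no such $\alpha$ exists and $\Pi_a$ is adapted. The cleanest route is: $p$ is in the fundamental chamber of $\mathfrak h_{\Real}$ up to $W$-conjugacy (Lemma \ref{int}), i.e. $\langle\alpha_i,p\rangle\ge0$ for all $i\in I$, whence $\langle\alpha,p\rangle\ge0$ for all $\alpha\in\Delta^+$, i.e. $\langle\rho_a(\alpha),p\rangle\ge0$; since $p$ is $\Sigma$-regular this gives $\rho_a(\alpha)\in\Sigma^+\cup\{0\}$.

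\emph{Step 3: the set $J$ and its finite type.} Set $J=\{i\in I\mid\rho_a(\alpha_i)=0\}$, equivalently $\Pi\cap\ker\rho_a$. First, $\{\alpha\in\Delta\mid\rho_a(\alpha)=0\}\supset\Delta_J$ trivially. Conversely, using Step 2 and its negative counterpart, any $\alpha\in\Delta$ with $\rho_a(\alpha)=0$ is both a nonnegative and (via $-\alpha$) a nonpositive integral combination of the $\alpha_i$ with $i\in J$: if $\alpha=\sum n_i\alpha_i\in\Delta^+$ then $\rho_a(\alpha)=\sum n_i\rho_a(\alpha_i)=0$ with all $\rho_a(\alpha_i)\in\Sigma^+\cup\{0\}$ forces $\rho_a(\alpha_i)=0$ for every $i$ in the support of $\alpha$, so $\mathrm{supp}(\alpha)\subset J$, i.e. $\alpha\in\Delta_J$. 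Thus $\Delta_J=\{\alpha\in\Delta\mid\rho_a(\alpha)=0\}$. Finally $J$ is of finite type: take $p$ as above with the extra requirement that its values on $\Delta$ are integral (possible by rationality of the pairing and rescaling, or work in $\mathfrak h_{\Real}$ without integrality and invoke Lemma \ref{int} directly); then $p\in\bar X$, $\langle\beta,p\rangle>0$ for $\beta\in\Delta_+^{im}$, so Lemma \ref{int} gives $p\in\buildrel\circ\over X$, and $\{\alpha\in\Delta\mid\langle\alpha,p\rangle=0\}=\Delta_J$ is the root subsystem of a finite type facet, hence $\Delta_J$ — and therefore $J$ — is of finite type. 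This also re-proves, with no extra work, that $V_0$ would be finite dimensional, consistent with (iii) of Definition \ref{DefGrad}.

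\emph{Main obstacle.} The delicate point is Step 2: extracting adaptedness from membership in $\bar X$ rather than in the open Tits cone, since a priori $\rho_a$ could send a positive real root of $\mathfrak g$ to a negative root of $\mathfrak m$ if $p$ sits on a wall. The resolution is to pass to the interior via Lemma \ref{int} — the hypothesis that $p$ is strictly positive on $\Delta_+^{im}$ is exactly what is needed — thereby landing $p$ in $\buildrel\circ\over X$, conjugating it into the fundamental chamber, and reading off $\langle\alpha_i,p\rangle\ge0$ for all $i\in I$. Once $p$ is in the closed fundamental chamber, Steps 2 and 3 are immediate. The finite-dimensionality-of-weight-spaces hypothesis is used only insofar as it guarantees $\rho_a(\alpha)\neq0$ for $\alpha$ imaginary (Lemma at the start of \S\ref{GenGrad}), which is what lets us choose $p$ strictly positive on $\Delta_+^{im}$ after adjusting $\Pi_a$.
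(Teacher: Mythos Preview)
Your proposal is correct and takes essentially the same approach as the paper: normalize $\Pi_a$ via Lemma~\ref{pab} so that $\rho_a(\Delta_+^{im})\subset\Sigma_+^{im}$, pick $p\in\mathfrak a$ strictly positive on $\Sigma^+$, apply Lemma~\ref{int} to place $p$ in $\buildrel\circ\over X$, bring it into the fundamental chamber, and then read off adaptedness and the finite-type $J$ simultaneously from $P=\{\alpha:\langle\alpha,p\rangle\ge0\}=\Delta^+\cup\Delta_J$. The only noteworthy differences are cosmetic: the paper takes $\langle\gamma_i,p\rangle=1$, which makes the integrality hypothesis of Lemma~\ref{int} automatic (since then $\langle\alpha,p\rangle=\mathrm{ht}_{\Pi_a}(\rho_a(\alpha))\in\Z$) and collapses your Steps~2 and~3 into a single stroke; your first pass at Step~2 (trying to extract a contradiction from mere finiteness of $\{\alpha\in\Delta^+:\langle\alpha,p\rangle<0\}$) does not actually yield one and should be dropped in favor of the ``cleanest route'' you give immediately after.
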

\textbf{N.B.} This is part 1) of Theorem 2. 
\begin{proof} Let $\Pi_a=\{\gamma_i, i\in \bar I\}$ be a root basis of $\Sigma$ such that  $\rho_a( \Delta_+^{im})\subset  \Sigma_+^{im}$, where $\bar I$ is just a set indexing the basis elements. Let $p\in\mathfrak{a}$ such that $\langle\gamma_i, p\rangle =1$, $\forall i\in \bar I$ and let ${P}= \{\alpha\in\Delta; \langle\alpha, p\rangle\geq 0\}$. If $\Delta$ is finite, then ${P}$ is clearly a parabolic subsystem of $\Delta$ and the result is trivial. Suppose now that $\Delta^{im}\not=\emptyset$; then $p$ satisfies the conditions of the Lemma \ref{int} and we may assume that $p$ lies in the facet of type $J$ for some subset $J$ of finite type in $I$. 
 In which case ${P}= \Delta_J\cup\Delta^+$ is the standard parabolic subsystem of finite type $J$. Note that, for $\gamma\in\Sigma^+$, one has $\langle\gamma, p\rangle =ht_a(\gamma)$ the height of $\gamma$ with respect to $\Pi_a$. It follows that $\{\alpha\in\Delta; \rho_a(\alpha)=0\}=\Delta_J$, in particular, $\rho_a(\Delta^+)=\rho_a({P}) \subset \Sigma^+ \cup\{0\}$. Hence, the root basis $\Pi_a$ is adapted to $\Pi$.
\end{proof}

From now on,  we fix a root basis  $\Pi_a=\{\gamma_s, s\in \bar I\}$, for the grading root system $\Sigma$, which is adapted to the root basis $\Pi=\{\alpha_i, i\in I\}$ of $\Delta$ (see Theorem \ref{adap}). As before, let $J:=\{j\in I\, ; \, \rho_a(\alpha_j)=0\}$ and $I':= I\setminus J$. For $k\in I'$, we denote, as above, by $I_k$ the connected component of $J\cup\{k\}$ containing $k$, and $J_k:=J\cap I_k$.

\begin{prop}\label{rsr} $\,$\\
1) Let $s\in \bar I$, then  there exists $k_s\in I'$ such that $\rho_a(\alpha_{k_s})=\gamma_s$ and any preimage  $\alpha	\in \Delta$ of $\gamma_s$ is equal to $\alpha_k$ modulo $\sum_{j\in J_k}\,\Z\alpha_j$ 
for some $k\in I'$ satisfying $\rho_a(\alpha_{k})=\gamma_s$.  \\
2) Let $k\in  I'$  such that $\rho_a(\alpha_{k})$  is a real root of $\Sigma$. Then $\rho_a(\alpha_{k})\in\Pi_a$ is a simple root. 
\end{prop}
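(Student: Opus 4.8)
The plan is to establish assertion 1) first and then deduce 2) from it by an $\mathfrak{sl}_2$-module argument. For 1), I would start by observing that every preimage of $\gamma_s$ lies in $\Delta^+$: since $\rho_a(\Delta\cup\{0\})=\Sigma\cup\{0\}$, since $\Pi_a$ is adapted (so $\rho_a(\Delta^+)\subset\Sigma^+\cup\{0\}$), and since $-\gamma_s\notin\Sigma^+\cup\{0\}$, no preimage can be negative. Fix then a preimage $\alpha=\sum_{i\in I}n_i\alpha_i\in\Delta^+$ with $n_i\in\Zplus$. Because $\rho_a(\alpha_j)=0$ for $j\in J$ and $\rho_a(\alpha_k)\in\Sigma^+$ for $k\in I'$, one gets $\gamma_s=\sum_{k\in I'}n_k\,\rho_a(\alpha_k)$: a simple root written as a $\Zplus$-linear combination of positive roots. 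Expanding each $\rho_a(\alpha_k)$ on the basis $\Pi_a$ and comparing coefficients, vanishing of the coefficient of every $\gamma_u$ with $u\neq s$ forces $\rho_a(\alpha_k)\in\Zplus\gamma_s$ for each $k$ with $n_k\neq 0$, and then the coefficient of $\gamma_s$ being $1$ forces a single such $k$, with $n_k=1$ and $\rho_a(\alpha_k)=\gamma_s$; this both produces $k_s:=k\in I'$ with $\rho_a(\alpha_{k_s})=\gamma_s$ and shows $\alpha=\alpha_k+\sum_{j\in J}m_j\alpha_j$ with $m_j\in\Zplus$. Finally, $\mathrm{supp}(\alpha)$ is connected, contains $k$ and is contained in $\{k\}\cup J$, hence lies in the connected component $I_k$ of $\{k\}\cup J$ containing $k$; therefore $m_j=0$ whenever $j\notin J_k$, that is, $\alpha$ equals $\alpha_k$ modulo $\sum_{j\in J_k}\Z\alpha_j$.

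For 2), set $\gamma:=\rho_a(\alpha_k)\in\Sigma^+$ and argue by contradiction: suppose $\gamma$ is a real root of $\Sigma$ that is not simple. Then there is $t\in\bar I$ with $\gamma_t\neq\gamma$ and $\langle\gamma,\gamma_t^\vee\rangle\geq 1$ (a positive real root has strictly positive pairing with some simple coroot; cf. \cite{vk}). Pick an $\mathfrak{sl}_2$-triple $(e_{\gamma_t},\gamma_t^\vee,f_{\gamma_t})$ in $\mathfrak m$, so that $\gamma_t^\vee\in\mathfrak a\subset\mathfrak h$ and $\mathrm{ad}(\gamma_t^\vee)$ acts on $\mathfrak g_{\alpha_k}$ by the positive scalar $\langle\alpha_k,\gamma_t^\vee\rangle=\langle\gamma,\gamma_t^\vee\rangle$. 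Since $\gamma_t$ is a real root of $\Sigma$, the set $\{N\in\Z:\gamma+N\gamma_t\in\Sigma\cup\{0\}\}$ is finite, so the $\mathfrak{sl}_2$-submodule of $\mathfrak g$ generated by $\mathfrak g_{\alpha_k}$ is contained in $\bigoplus_N V_{\gamma+N\gamma_t}$ and hence finite-dimensional (this is exactly where the finite-multiplicity hypothesis is used). By $\mathfrak{sl}_2$-representation theory a weight vector of positive weight in a finite-dimensional module is not annihilated by $f$, so $[f_{\gamma_t},\mathfrak g_{\alpha_k}]\neq 0$.

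The contradiction comes from computing this bracket directly. Write $f_{\gamma_t}=\sum_\beta y_\beta$ with $y_\beta\in\mathfrak g_\beta$ and $\rho_a(\beta)=-\gamma_t$; adaptedness forces each such $\beta$ into $\Delta^-$, so $-\beta\in\Delta^+$ is a preimage of the simple root $\gamma_t$, and part 1) gives $-\beta=\alpha_{k'}+\sum_{j\in J_{k'}}m_j\alpha_j$ with $m_j\in\Zplus$ and $\rho_a(\alpha_{k'})=\gamma_t$; since $\rho_a(\alpha_k)=\gamma\neq\gamma_t$ we get $k'\neq k$. Then $\beta+\alpha_k=\alpha_k-\alpha_{k'}-\sum_{j\in J_{k'}}m_j\alpha_j$ has coefficient $+1$ on $\alpha_k$ and $-1$ on $\alpha_{k'}$ (both $k$ and $k'$ lie in $I'$, while $J_{k'}\subset J$), so it is neither $0$ nor a root, whence $\mathfrak g_{\beta+\alpha_k}=0$; summing over $\beta$ yields $[f_{\gamma_t},\mathfrak g_{\alpha_k}]=0$, the desired contradiction, and therefore $\gamma\in\Pi_a$. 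I expect the main difficulty to lie in organizing the two pieces of bookkeeping --- controlling every preimage of a simple root (support included) in 1), and the sign analysis in 2) keeping $\beta+\alpha_k$ outside $\Delta$ --- rather than in any single hard estimate; note that this route needs neither integrability of $\mathfrak g$ as an $\mathfrak m$-module nor symmetrizability of $\mathfrak m$.
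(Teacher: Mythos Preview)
Your argument is correct in both parts. For 1), the positivity/support bookkeeping is exactly right: adaptedness forces preimages of $\gamma_s$ into $\Delta^+$, the $\Zplus$-expansion $\gamma_s=\sum_{k\in I'}n_k\rho_a(\alpha_k)$ with each $\rho_a(\alpha_k)\in\Sigma^+$ pins down a unique $k$ with $n_k=1$ and $\rho_a(\alpha_k)=\gamma_s$, and connectedness of $\mathrm{supp}(\alpha)\subset\{k\}\cup J$ confines the remaining support to $J_k$. For 2), the $\mathfrak{sl}_2$-step is sound: the string $\{\gamma+N\gamma_t\}\cap(\Sigma\cup\{0\})$ is finite (Kac, Prop.~3.6) and never hits $0$ since $\gamma$ is real and $\neq\gamma_t$, so the finite-multiplicity hypothesis makes the submodule finite-dimensional; then $f$ is injective on positive weight spaces of a finite-dimensional $\mathfrak{sl}_2$-module, yielding $[f_{\gamma_t},\mathfrak g_{\alpha_k}]\neq 0$. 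The sign analysis showing $\beta+\alpha_k$ has coefficients $+1$ on $\alpha_k$ and $-1$ on $\alpha_{k'}$ (with $k,k'\in I'$ distinct and $J_{k'}\subset J$) is clean and gives the contradiction.

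As for comparison: the paper does not give its own proof here but simply refers to Nervi \cite{nerv2}, Prop.~2.3.10 and the proof of Prop.~2.3.12, asserting that her affine arguments go through in general. Your write-up therefore fills in what the paper leaves to citation. Without reproducing Nervi's text one cannot line up the two arguments step by step, but your route---using only adaptedness, the finite-multiplicity hypothesis, and elementary $\mathfrak{sl}_2$-theory, and explicitly avoiding integrability of $\mathfrak g$ over $\mathfrak m$ and symmetrizability of $\mathfrak m$ (both of which are established only \emph{after} this proposition in the paper)---is logically self-contained and appropriate to this point in the development.
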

\begin{proof} This result was proved by J. Nervi for affine algebras (see \cite{nerv2}, Prop.2.3.10 and the proof of Prop. 2.3.12). The arguments used there are available for  general Kac-Moody algebras.
\end{proof}
We introduce the following notations :   $$I'_{re}:=\{i\in I'\, ; \, \rho_a(\alpha_i)\in\Pi_a\}\;  ;  \; \;I'_{im}:=I'\setminus I'_{re},$$  $$I_{re}=\displaystyle\mathop{\cup}_{k\in I'_{re}} I_k \; ; \; \; J_{re} =I_{re}\cap J=\displaystyle\mathop{\cup}_{k\in I'_{re}} J_k \;  ; \;\; J^{\circ}=J\setminus J_{re}$$ 
 $$\Gamma_s:=\{i\in I'\, ; \, \rho_a(\alpha_i)=\gamma_s\}\, ,\forall s\in \bar I.$$ 
 Note that $J^{\circ}$ is not connected to $I_{re}$.
\begin{rem}\label{gap} $\,$\\
1) In view  of  Proposition \ref{rsr}, assertion 2),   one has $\rho_a(\alpha_k)\in\Sigma_{im}^+$,  $\forall k\in I'_{im}$.\\
2) $I=I_{re}\cup I'_{im}\cup J^{\circ}$ is a disjoint union. \\
3) If $I'_{im}=\emptyset$, then  $I=I_{re}\cup J^{\circ}$. Since $I$ is connected (and $I_{re}$ is not connected to $J^{\circ}$) we deduce that  $J^{\circ}=\emptyset$,   $I=I_{re}$ and $I'_{re}=I'=I\setminus J$.
 \end{rem}
 
\begin{prop}\label{ire}  $\,$\\
1) Let $k\in I'_{re}$, then $I_k$ is of finite type.\\
2) Let $s\in \bar I$. If $|\Gamma_s|\geq 2$ and $k\not=l\in\Gamma_s$, then $I_k\cup I_l$ is not connected: $\mathfrak{g}(I_k)$ and $ \mathfrak{g}(I_l)$ commute and are orthogonal.
\\
3) For all  $k\in  I'_{re}$, $(I_k,J_k)$ is an irreducible $C-$admissible pair. \\
4) The  derived subalgebra  $\mathfrak{m}'$ of the grading algebra $\mathfrak{m}$ is contained in $\mathfrak{g}'(I_{re})$ (as defined in proposition \ref{realj}).
\end{prop}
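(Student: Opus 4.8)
The plan is to establish the four assertions in order, using throughout that for $k\in I'_{re}$ the root $\rho_a(\alpha_k)=\gamma_s$ (for the appropriate $s\in\bar I$) is a simple, hence real, root of $\Sigma$, so that $m\gamma_s\notin\Sigma\cup\{0\}$ for every integer $|m|\ge 2$. For 1): any $\alpha\in\Delta(I_k)$ has the form $n_k\alpha_k+\sum_{j\in J_k}n_j\alpha_j$, so $\rho_a(\alpha)=n_k\gamma_s$ because $\rho_a(\alpha_j)=0$ for $j\in J$; since $\rho_a(\alpha)\in\Sigma\cup\{0\}$ this forces $n_k\in\{0,\pm 1\}$, i.e. every root of $\Delta(I_k)$ has $\alpha_k$-coefficient $0$ or $\pm 1$. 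If $I_k$ were not of finite type it would carry an imaginary root $\alpha$, whose $\alpha_k$-coefficient is nonzero (otherwise $\alpha\in\Delta(J_k)$, which is finite) hence $\pm 1$; but then $2\alpha$ is again an imaginary root of $\Delta(I_k)$, with $\alpha_k$-coefficient $\pm 2$, a contradiction. So $I_k$ is of finite type and $\mathfrak g(I_k)$ is finite dimensional simple.

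For 2): assume $k\ne l\in\Gamma_s$ and that $I_k\cup I_l$ is connected. Exhausting $I_k\cup I_l$ one vertex at a time through a connected increasing family starting from $\{k\}$, I build a positive root $\beta\in\Delta(I_k\cup I_l)\subset\Delta$ with support all of $I_k\cup I_l$ and coefficient $1$ on each of $\alpha_k,\alpha_l$: if $\beta_m\in\Delta^+$ has connected support $L_m$ and $i\notin L_m$ is adjacent to $L_m$, then in the $\alpha_i$-string through $\beta_m$ one has $p=0$ (since $\beta_m-\alpha_i$ has coefficients of both signs) and $q=-\langle\beta_m,\alpha_i^\vee\rangle\ge 1$ (a sum of nonpositive terms, at least one strictly negative since $i$ is linked to $L_m$), so $\beta_m+\alpha_i\in\Delta$. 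All vertices of $I_k\cup I_l$ other than $k,l$ lie in $J$, hence $\rho_a(\beta)=2\gamma_s$, contradicting that $\gamma_s$ is real. Therefore $I_k\cup I_l$ is disconnected, and since $I_k,I_l$ are each connected this means $I_k\cap I_l=\emptyset$ with no edge between them; consequently the Chevalley generators of $\mathfrak g'(I_k)$ and $\mathfrak g'(I_l)$ commute (Serre relations and $[e_i,f_j]=0$), so the two subalgebras commute, and $\mathfrak g(I_k)\perp\mathfrak g(I_l)$ because $\Delta(I_k)$ and $-\Delta(I_l)$ are disjoint and $\mathfrak h_{I_k}\perp\mathfrak h_{I_l}$.

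For 3): fix $k\in I'_{re}$, put $\gamma_s=\rho_a(\alpha_k)$, and note $\Gamma_s\subset I'_{re}$, so by 1) each $\mathfrak g(I_{k'})$ with $k'\in\Gamma_s$ is simple and carries its distinguished element $H_{k'}$, and by 2) the sets $I_{k'}$ ($k'\in\Gamma_s$) are pairwise disjoint with the $\mathfrak g(I_{k'})$ pairwise commuting and orthogonal. Let $(e_{\gamma_s},\gamma_s^\vee,f_{\gamma_s})$ be the $\mathfrak{sl}_2$-triple of Chevalley generators of $\mathfrak m$ attached to $\gamma_s$, with $\gamma_s^\vee\in\mathfrak a\subset\mathfrak h$. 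By Proposition \ref{rsr} every root restricting to $\pm\gamma_s$ lies in $\Delta(I_{k'})$ for some $k'\in\Gamma_s$, so $e_{\gamma_s}=\sum_{k'\in\Gamma_s}E_{k'}$ and $f_{\gamma_s}=\sum_{k'\in\Gamma_s}F_{k'}$ with $E_{k'},F_{k'}\in\mathfrak g'(I_{k'})$; the cross-brackets vanish, so $H'_{k'}:=[E_{k'},F_{k'}]\in\mathfrak h_{I_{k'}}$ satisfy $\gamma_s^\vee=\sum_{k'}H'_{k'}$, and projecting $[\gamma_s^\vee,e_{\gamma_s}]=2e_{\gamma_s}$ (and likewise for $f_{\gamma_s}$) to the summand $\mathfrak g(I_{k'})$ gives $[H'_{k'},E_{k'}]=2E_{k'}$, $[H'_{k'},F_{k'}]=-2F_{k'}$. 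For $i\in I_{k'}$ one has $\langle\alpha_i,\gamma_s^\vee\rangle=\langle\rho_a(\alpha_i),\gamma_s^\vee\rangle$, which equals $2$ if $i=k'$ and $0$ if $i\in J_{k'}\subset J$, while $\langle\alpha_i,H'_{k''}\rangle=0$ for $k''\ne k'$ (disjointness, no edges); hence $\langle\alpha_i,H'_{k'}\rangle=2\delta_{i,k'}$, i.e. $H'_{k'}=H_{k'}$. In particular $E_{k'}\ne 0$ (else $H'_{k'}=0$, contradicting $\langle\alpha_{k'},H'_{k'}\rangle=2$), so $(E_{k'},H_{k'},F_{k'})$ is an $\mathfrak{sl}_2$-triple; taking $k'=k$ shows $(I_k,J_k)$ is admissible. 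Finally $\mathrm{ad}(H_k)$ has eigenvalues in $\{-2,0,2\}$ on $\mathfrak g(I_k)$ by 1), the spaces $d_{k,\pm 1}$ are nonzero, and $d_{k,0}=[d_{k,1},d_{k,-1}]$ for a short $\Z$-grading of a simple Lie algebra (the subspace $d_{k,1}\oplus[d_{k,1},d_{k,-1}]\oplus d_{k,-1}$ is a nonzero ideal), so $\mathfrak g(I_k)$ is $A_1$-graded; as $I_k$ is connected, $(I_k,J_k)$ is an irreducible $C-$admissible pair.

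For 4): $\mathfrak m'$ is generated by the $e_{\gamma_s},f_{\gamma_s}$ ($s\in\bar I$), and by the decomposition in 3) these lie in $\sum_{k'\in\Gamma_s}\mathfrak g'(I_{k'})\subset\mathfrak g'(I_{re})$ (since $\Gamma_s\subset I'_{re}$ gives $I_{k'}\subset I_{re}$); as $\mathfrak g'(I_{re})$ is a subalgebra, $\mathfrak m'\subset\mathfrak g'(I_{re})$. The real work is in step 3): one must produce the $\mathfrak{sl}_2$-triple of the admissible pair directly from the grading $\mathfrak{sl}_2$ of $\mathfrak m$, and the combinatorial control making this possible — that the $\mathfrak g(I_{k'})$, $k'\in\Gamma_s$, split off as mutually commuting summands and that the component of $\gamma_s^\vee$ in each is exactly the distinguished element $H_{k'}$ — is precisely why assertions 1), 2) and Proposition \ref{rsr} are needed first.
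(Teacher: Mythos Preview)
Your proof is correct and follows essentially the same route as the paper: the coefficient bound $n_k\in\{0,\pm1\}$ in 1), the ``connected support would force $\rho_a(\beta)\in\Z_{\ge2}\gamma_s$'' contradiction in 2), and in 3) the decomposition of the Chevalley $\mathfrak{sl}_2$-triple $(\bar X_s,\gamma_s^\vee,\bar Y_s)$ of $\mathfrak m$ along the commuting pieces $\mathfrak g(I_{k'})$, $k'\in\Gamma_s$, followed by the identification $H'_{k'}=H_{k'}$ via $\langle\alpha_i,\gamma_s^\vee\rangle=2\delta_{i,k'}$ for $i\in I_{k'}$, are exactly the paper's arguments. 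The only differences are cosmetic: in 2) you build the root $\sum_{i\in I_k\cup I_l}\alpha_i$ by hand where the paper just invokes ``the same argument as 1)'', and in 3) your aside that $d_{k,0}=[d_{k,1},d_{k,-1}]$ is unnecessary---by Definition~\ref{DefGrad} the $A_1$-grading condition reduces to the eigenvalue bound $\mathrm{ad}(H_k)\in\{-2,0,2\}$ on $\mathfrak g(I_k)$, which you already have from 1).
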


\begin{proof} $\,$ \\
1) Suppose that there exists  $k\in I'_{re}$ such that  $I_k$ is not of finite type; then there exists an imaginary root $\beta_k$ whose support is the whole $I_k$. Hence, there exists a positive integer $m_k\in\Nat$ such that $\rho_a(\beta_k)=m_k\rho(\alpha_k)$ is an imaginary root of $\Sigma$. It follows that $\rho_a(\alpha_k)$ is  an  imaginary root and this contradicts the fact that $k\in I'_{re}$.\\
2) Let $s\in \bar I$ such that $|\Gamma_s|\geq 2$  and let $k\not=l\in\Gamma_s$. Since $V_{n\gamma_s}=\{0\}$ for all integer $n\geq 2$, the same argument used in 1) shows that $I_k\cup I_l$ is not connected, and  $I_k$ and $I_l$ are its  two connected components. In particular, $[\mathfrak{g}(I_k), \mathfrak{g}(I_l)]=\{0\}$ and $(\mathfrak{g}(I_k), \mathfrak{g}(I_l))=\{0\}$.\\ 
3) Let $k\in  I'_{re}$ and let  $s\in \bar I$ such that $\rho_a(\alpha_k)=\gamma_s$.  Let $(\bar X_s, \; \bar H_s=\gamma\check{_s}, \; \bar Y_s)$ be an $\mathfrak{sl}_2-$triple in $\mathfrak{m}$ corresponding to the simple root $\gamma_s$. Let  $V_{\gamma_s}$ be the weight space of $\mathfrak{g}$ corresponding to $\gamma_s$. In view of  Proposition \ref{rsr}, assertion 1),  one has : 
\begin{equation}\label{wsd} 
V_{\gamma_s}= \displaystyle\mathop{\oplus}_{l\in\Gamma_s}V_{\gamma_s}\cap\mathfrak{g}(I_l). 
\end{equation}
Hence, one can  write : 
\begin{equation}\label{xbar} 
\bar X_s=\sum_{l\in\Gamma_s}E_l\, ; \quad\bar Y_s=\sum_{l\in\Gamma_s}F_l\, ,
\end{equation}
with $E_l\in V_{\gamma_s}\cap\mathfrak{g}(I_l)$ and $F_l\in V_{-\gamma_s}\cap\mathfrak{g}(I_l)$. It follows from assertion 1) that \begin{equation}\label{hbar} 
\bar H_s=\gamma\check{_s}=[\bar X_s,\bar Y_s]=\sum_{l\in\Gamma_s}[E_l, F_l]= \sum_{l\in\Gamma_s}H_l ,
\end{equation}
where $H_l:=[E_l, F_l]\in\mathfrak{h}_{I_l}$, $\forall l\in\Gamma_s$. Then one has, for $ k\in\Gamma_s$,  $$2=\langle\gamma_s,\gamma\check{_s}\rangle=\langle\alpha_k,\gamma\check{_s}\rangle=\sum_{l\in\Gamma_s}\langle\alpha_k,H_l\rangle=\langle\alpha_k,H_k\rangle\, , $$   and for $j\in J_k$,  $$0=\langle\alpha_j,\gamma\check{_s}\rangle=\sum_{l\in\Gamma_s}\langle\alpha_j,H_l\rangle=\langle\alpha_j,H_k\rangle.$$ In particular, $H_k$ is the unique semi-simple element of $\mathfrak{h}_{I_k}$ satisfying : 
\begin{equation}\label{hk}
\langle\alpha_i,H_k\rangle =2\delta_{i,k}, \forall i\in I_k. 
\end{equation} 
Hence,  $(E_k, H_k, F_k)$ is an $\mathfrak{sl}_2-$triple in the simple Lie algebra  $\mathfrak{g}(I_k)$ and since $V_{2\gamma_s}=\{0\}$, $(I_k,J_k)$ is an irreducible $C-$admissible pair for all $ k\in\Gamma_s$. \\
 The assertion 4) follows from the relation (\ref{xbar}). 
\end{proof}

\begin{cor} \label{IreCadm} The pair $(I_{re},J_{re})$ is $C-$admissible.
If $I'_{im}=\emptyset$, then $I_{re}=I$, $J_{re}=J$ 
and $\mathfrak{g}$ is finitely $\Delta^J-$graded, with grading subalgebra $\mathfrak{g}^J$.
\end{cor}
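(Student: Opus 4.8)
The plan is to read off both assertions from Proposition~\ref{ire} and Theorem~\ref{jgrad}, the only genuine work being a small combinatorial translation between Definition~\ref{defadm} and the conclusions of Proposition~\ref{ire}. First I would record two elementary facts: $I_{re}\setminus J_{re}=I'_{re}$, and $J_{re}$ is of finite type. For the first, $I'_{re}\subseteq I_{re}$ together with $I'_{re}\cap J=\emptyset$ gives one inclusion, while if $i\in I_{re}\setminus J_{re}$ then $i\in I_k$ for some $k\in I'_{re}$, and $i\neq k$ would force $i\in J_k\subseteq J_{re}$, which is impossible, so $i=k\in I'_{re}$. For the second, $J_{re}\subseteq J$ and $J$ is of finite type by Theorem~\ref{adap}, so the same holds for its sub-vertex-set $J_{re}$.

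The key step is to check that, for $k\in I'_{re}$, the connected component of $k$ in the Dynkin subdiagram on the vertex set $J_{re}\cup\{k\}$ is exactly $I_k$, with associated subset $J_k$. Since $J_{re}\cup\{k\}\subseteq J\cup\{k\}$, that component is contained in the connected component $I_k$ of $k$ in $J\cup\{k\}$; conversely $I_k=J_k\cup\{k\}$ is connected and $J_k\subseteq J_{re}$, so $I_k$ is a connected subset of $J_{re}\cup\{k\}$ containing $k$, which gives the reverse inclusion. Hence, by Definition~\ref{defadm}~2) (read for a possibly decomposable index set, as explained after Theorem~\ref{jgrad}), the pair $(I_{re},J_{re})$ is $C$-admissible precisely when, for every $k\in I'_{re}$, the diagram $I_k$ is of finite type and the pair $(I_k,J_k)$ is $C$-admissible; both are exactly the content of Proposition~\ref{ire}, parts~1) and~3). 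This proves the first assertion.

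For the second, assume $I'_{im}=\emptyset$. By Remark~\ref{gap}~3) this forces $J^{\circ}=\emptyset$, $I=I_{re}$ and $I'_{re}=I'=I\setminus J$, whence $J_{re}=I_{re}\cap J=J$. Thus $(I,J)=(I_{re},J_{re})$ is $C$-admissible by the first part, and Theorem~\ref{jgrad} applies verbatim, giving that $\mathfrak{g}$ is finitely $\Delta^J$-graded with grading subalgebra $\mathfrak{g}^J$. I expect the one subtle point to be the middle step: one has to be sure that the combinatorial data attached to a vertex $k$ --- its connected component in $J\cup\{k\}$ and the associated subset of $J$ --- are unchanged when the ambient Dynkin diagram is cut down from $I$ to $I_{re}$, and the inclusions $J_k\subseteq J_{re}\subseteq J$ are exactly what guarantee this.
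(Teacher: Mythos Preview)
Your proof is correct and follows exactly the route taken in the paper: the first assertion from Proposition~\ref{ire} (parts~1) and~3)), the second from Remark~\ref{gap}~3) together with Theorem~\ref{jgrad}. The paper's own proof is a terse two-sentence version of precisely this argument; you have simply spelled out the combinatorial translation (that $I_{re}\setminus J_{re}=I'_{re}$, that the component of $k$ in $J_{re}\cup\{k\}$ coincides with $I_k$, and that $J_{re}$ is of finite type) which the paper leaves implicit.
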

\textbf{N.B.} We have got part 2) of Theorem 2. 
\begin{proof} The first assertion is a consequence of Proposition \ref{ire}. By remark \ref{gap}, when 
 $I'_{im}=\emptyset$, we have $I=I_{re}$; 
 hence, by Theorem \ref{jgrad}, $\mathfrak{g}$ is finitely $\Delta^J-$graded.
\end{proof}
\begin{defn}\label{GenMaxGrad} If $I'_{im}\not=\emptyset$, then $(I,J)$ is called a  generalized $C-$admissible pair.  If  $I'_{im}=J=\emptyset$, the Kac-Moody algebra $\mathfrak{g}$ is said to be maximally   finitely $\Sigma-$graded. 
\end{defn}

\begin{cor}\label{Msym} The grading subalgebra $\mathfrak{m}$ of  $\mathfrak{g}$ is symmetrizable and the restriction to $\mathfrak{m}$ of the invariant bilinear form of $\mathfrak{g}$ is nondegenerate. 
\end{cor}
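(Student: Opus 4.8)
The plan is to prove the two assertions simultaneously by working with the restriction $b$ of the invariant form $(\,.\,,\,.\,)$ of $\mathfrak g$ to $\mathfrak m$: this $b$ is an $\mathrm{ad}(\mathfrak m)$-invariant symmetric $\Complex$-bilinear form on $\mathfrak m$, so once I have shown that $b(\gamma\check{_s},\gamma\check{_s})>0$ for every simple root $\gamma_s$ of $\Sigma$ and that $\mathfrak m$ is symmetrizable, the remark at the end of \ref{ibf} — valid for any indecomposable symmetrizable Kac–Moody algebra, hence for $\mathfrak m$ in place of $\mathfrak g$ — will give at once that $b$ is nondegenerate. In particular no case distinction on the type of $\mathfrak g$ will be needed.

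For each $s\in\bar I$ I would fix, as in Proposition \ref{ire}, the $\mathfrak{sl}_2$-triple $(\bar X_s,\gamma\check{_s},\bar Y_s)$ of $\mathfrak m$ attached to $\gamma_s$, together with the decompositions $\bar X_s=\sum_{l\in\Gamma_s}E_l$ and $\gamma\check{_s}=\sum_{l\in\Gamma_s}H_l$, with $E_l\in\mathfrak g(I_l)$ and $H_l=[E_l,F_l]\in\mathfrak h_{I_l}$. By Proposition \ref{ire} each $I_l$ is of finite type, each pair $(I_l,J_l)$ is irreducible $C$-admissible — whence $H_l=\sum_{i\in I_l}n_{i,l}\alpha\check{_i}$ with all $n_{i,l}$ positive integers and $\langle\alpha_i,H_l\rangle=2\delta_{i,l}$ for $i\in I_l$ — and for $l\neq l'$ in $\Gamma_s$ the simple subalgebras $\mathfrak g(I_l)$ and $\mathfrak g(I_{l'})$ are orthogonal. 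Using $\nu(\alpha\check{_i})=\frac{(\alpha\check{_i},\alpha\check{_i})}{2}\alpha_i$ one computes $(H_l,H_l)=\langle\nu(H_l),H_l\rangle=n_{l,l}(\alpha\check{_l},\alpha\check{_l})$, which is $>0$ by \ref{bf}; by the orthogonality this gives $(\gamma\check{_s},\gamma\check{_s})=\sum_{l\in\Gamma_s}(H_l,H_l)>0$.

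Next I would compute $(\gamma\check{_s},h)$ for arbitrary $h\in\mathfrak a$: expanding $\nu(H_l)=\sum_{i\in I_l}n_{i,l}\frac{(\alpha\check{_i},\alpha\check{_i})}{2}\alpha_i$ and noting that $\rho_a(\alpha_i)=0$ for $i\in J_l\subset J$ while $\rho_a(\alpha_l)=\gamma_s$ (since $l\in\Gamma_s\subset I'_{re}$), one gets $(H_l,h)=n_{l,l}\frac{(\alpha\check{_l},\alpha\check{_l})}{2}\langle\gamma_s,h\rangle$, hence $(\gamma\check{_s},h)=\frac{(\gamma\check{_s},\gamma\check{_s})}{2}\langle\gamma_s,h\rangle$ for all $h\in\mathfrak a$. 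Taking $h=\gamma\check{_t}$ and using the symmetry of the form yields $(\gamma\check{_s},\gamma\check{_s})\,a'_{t,s}=(\gamma\check{_t},\gamma\check{_t})\,a'_{s,t}$, where $A_{\mathfrak m}=(a'_{s,t})_{s,t\in\bar I}$; since every $(\gamma\check{_s},\gamma\check{_s})>0$, this says exactly that $^tA_{\mathfrak m}$, and so $A_{\mathfrak m}$, is symmetrizable — the argument being the one already used in the proof of Proposition \ref{AJhJ}. Thus $\mathfrak m$ is a symmetrizable (and, by hypothesis, indecomposable) Kac–Moody algebra, and applying the remark of \ref{ibf} to $b$ concludes the proof. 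There is no real obstacle here: the only substantive computations are the two coroot evaluations of the last two paragraphs, and the key point is simply the observation that the remark of \ref{ibf} reduces nondegeneracy to the positivity $b(\gamma\check{_s},\gamma\check{_s})>0$, which is itself forced by the $C$-admissibility of the pairs $(I_l,J_l)$.
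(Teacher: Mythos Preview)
Your argument is correct and follows essentially the same route as the paper's: both rest on the decomposition $\gamma\check{_s}=\sum_{k\in\Gamma_s}H_k$ from Proposition~\ref{ire}, the orthogonality of the $\mathfrak g(I_k)$ for distinct $k\in\Gamma_s$, and the resulting positivity $(\gamma\check{_s},\gamma\check{_s})=\sum_k(H_k,H_k)>0$, after which the remark of \S\ref{ibf} gives nondegeneracy. Your version is simply a more explicit unpacking --- you compute $(H_l,H_l)=n_{l,l}(\alpha\check{_l},\alpha\check{_l})$ and derive symmetrizability of $A_{\mathfrak m}$ by the direct computation of $(\gamma\check{_s},h)$ as in Proposition~\ref{AJhJ}, whereas the paper folds both conclusions into the single reference to \S\ref{ibf}.
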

\begin{proof}
Let $(.\, , .)_a$ be the restriction to $\mathfrak{m}$ of the invariant bilinear form $(.\, , .)$ of $\mathfrak{g}$.   Recall from 
the proof of Proposition \ref{ire} that $\gamma\check{_s}=\sum_{k\in\Gamma_s}H_k$, $\forall s\in\bar I$. In particular $(\gamma\check{_s}, \gamma\check{_s})_a=\sum_{k\in\Gamma_s}(H_k,H_k)>0$. It follows that $(.\, , .)_a$ is a nondegenerate invariant bilinear form on  $\mathfrak{m}$ (see \S \ref{ibf}) and that  $\mathfrak{m}$ is symmetrizable.
\end{proof}

\begin{cor}\label{eqCartan} Let  $\mathfrak{h}^J$ be the orthogonal of  $\mathfrak{h}_J$ in  $\mathfrak{h}$.  For $k\in I_{im}'$, write $$\displaystyle\rho_a(\alpha_k)=\sum_{s\in\bar I}n_{s,k}\gamma_s.$$ For $s\in\bar I$, choose $l_s$ a representative element  of $\Gamma_s$. Then  $\mathfrak{a}/\mathfrak{c}_a$ can be viewed as the subspace of  $\mathfrak{h}^J/ \mathfrak{c}$ defined by the following relations : 
$$
\begin{array}{lll}
\langle\alpha_k,h\rangle=\langle\alpha_{l_s},h\rangle, \forall k\in\Gamma_s, \forall s\in \bar I\, & 
\\
 &  \\
 \langle\alpha_k,h\rangle=\displaystyle
\sum_{s\in\bar I}n_{s,k}\langle\alpha_{l_s},h\rangle, \forall k\in I_{im}'.&  
 \end{array}
$$
\end{cor}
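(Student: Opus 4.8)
The plan is to identify $\mathfrak{a}$ (modulo the center $\mathfrak{c}_a=\mathfrak{c}\cap\mathfrak{a}$) inside $\mathfrak{h}^J/\mathfrak{c}$ by exhibiting two systems of linear equations: one coming from the fact that several simple roots $\alpha_k$, $k\in\Gamma_s$, restrict to the same simple root $\gamma_s$ of $\Sigma$, and one coming from the expansion $\rho_a(\alpha_k)=\sum_s n_{s,k}\gamma_s$ for $k\in I'_{im}$. First I would recall from Proposition \ref{AJhJ} (applied with the finite-type subset $J$) that $\mathfrak{h}^J$ is a realization of $A^J$ with simple roots $\alpha'_k=\alpha_k|_{\mathfrak{h}^J}$, $k\in I'$, and that $\mathrm{corank}(A^J)=\mathrm{corank}(A)$, so that $\mathfrak{h}^J/\mathfrak{c}$ has dimension $|I'|$ with the $\alpha'_k$ forming a basis of its dual. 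Since $\mathfrak{a}\subset\mathfrak{h}$ and $\rho_a(\alpha_j)=0$ for $j\in J$, every $h\in\mathfrak{a}$ lies in $\Pi_J^\perp=\mathfrak{h}^J$; hence $\mathfrak{a}\subset\mathfrak{h}^J$ and, by Lemma \ref{ind}'s companion (the center computation in the first lemma of section \ref{GenGrad}), $\mathfrak{c}_a=\mathfrak{c}\cap\mathfrak{a}$, giving an inclusion $\mathfrak{a}/\mathfrak{c}_a\hookrightarrow\mathfrak{h}^J/\mathfrak{c}$.

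Next I would check that the asserted relations hold on $\mathfrak{a}$. For $h\in\mathfrak{a}$ and $k\in\Gamma_s$ one has $\langle\alpha_k,h\rangle=\langle\rho_a(\alpha_k),h\rangle=\langle\gamma_s,h\rangle$, which is independent of the choice of $k\in\Gamma_s$; in particular it equals $\langle\alpha_{l_s},h\rangle$. Similarly for $k\in I'_{im}$, using $\rho_a(\alpha_k)=\sum_{s\in\bar I}n_{s,k}\gamma_s$, we get $\langle\alpha_k,h\rangle=\sum_s n_{s,k}\langle\gamma_s,h\rangle=\sum_s n_{s,k}\langle\alpha_{l_s},h\rangle$. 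Thus $\mathfrak{a}/\mathfrak{c}_a$ is contained in the subspace $V$ of $\mathfrak{h}^J/\mathfrak{c}$ cut out by these equations.

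It then remains to prove the reverse inclusion, i.e.\ that $\dim(\mathfrak{a}/\mathfrak{c}_a)=\dim V$; this is the step I expect to be the main obstacle. The dimension of $V$ is computed by counting the independent linear conditions: the $\alpha'_k$, $k\in I'$, are coordinates on $\mathfrak{h}^J/\mathfrak{c}$ up to the $\mathrm{corank}(A)$ ``central'' freedom, and the relations say that the coordinate indexed by any $k\in\Gamma_s$ equals that of $l_s$, while the coordinate of each $k\in I'_{im}$ is a fixed combination of the $\alpha_{l_s}$-coordinates; so $V$ has dimension $|\bar I|+\mathrm{corank}(A)$ minus any redundancy, which by $\mathrm{corank}(A^J)=\mathrm{corank}(A)$ and $\mathrm{corank}$ being the same for the matrix of $\Sigma$ (Corollary \ref{Msym} and the equality of types, Proposition \ref{st}) should match $\dim\mathfrak{a}=|\bar I|+\mathrm{corank}$ of $\Sigma$'s Cartan matrix $=|\bar I|+\mathrm{corank}(A)$. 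Concretely I would argue: the map $\mathfrak{h}^J\to\mathfrak{a}^*$, $h\mapsto(\gamma_s\mapsto\langle\alpha_{l_s},h\rangle)$ wait—rather, dualize: the restriction map $\mathfrak{h}^J\twoheadrightarrow$ (functions on $\mathfrak{a}$) has image spanned by $\rho_a(\Pi)=\Pi_a$, and the equations above are exactly the relations among the $\alpha'_k$ forced by factoring through $\rho_a$; so $V$ is the annihilator of $\ker(\mathfrak{h}^J\to\mathfrak{a}^{**})$, and since $\mathfrak{a}\subset\mathfrak{h}^J$ with $\mathfrak{c}_a=\mathfrak{c}\cap\mathfrak a$ the double annihilator equals $\mathfrak{a}/\mathfrak{c}_a$ by finite-dimensional duality once one checks there are no further hidden relations — and the absence of further relations follows because $\Pi_a=\{\gamma_s\}$ is linearly independent in $\mathfrak{a}^*$ (a root basis of $\Sigma$) together with the corank count from Proposition \ref{AJhJ} and Corollary \ref{Msym}. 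This dimension bookkeeping, carefully done, closes the argument.
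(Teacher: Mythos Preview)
Your approach is essentially the same as the paper's: show $\mathfrak{a}/\mathfrak{c}_a$ is contained in the subspace $V$ cut out by the relations, then conclude by a dimension count. The paper's proof is a single sentence: $V$ has dimension $|\bar I|$, it contains $\mathfrak{a}/\mathfrak{c}_a$, and $\dim(\mathfrak{a}/\mathfrak{c}_a)=|\bar I|$, so they coincide.

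You overcomplicate the dimension step. Since $J$ is of finite type, $\mathfrak{h}=\mathfrak{h}^J\oplus\mathfrak{h}_J$ and the map $\mathfrak{h}^J\to\Complex^{I'}$, $h\mapsto(\langle\alpha_k,h\rangle)_{k\in I'}$, has kernel exactly $\mathfrak{c}$; hence $\mathfrak{h}^J/\mathfrak{c}\cong\Complex^{I'}$. The relations express each coordinate indexed by $k\in I'\setminus\{l_s:s\in\bar I\}$ as a linear combination of the $|\bar I|$ coordinates indexed by the $l_s$, so they are visibly independent and $\dim V=|\bar I|$. On the other side, $\dim(\mathfrak{a}/\mathfrak{c}_a)=|\bar I|$ holds for any Kac-Moody Cartan subalgebra, with no need to match coranks via Proposition~\ref{st} or Corollary~\ref{Msym}. (A minor caution: Proposition~\ref{AJhJ} is stated under the hypothesis that $(I,J)$ is $C$-admissible, which is not yet known here; the facts you actually need from it---the dimension of $\mathfrak{h}^J$ and $\mathfrak{c}\subset\mathfrak{h}^J$---follow already from $J$ being of finite type.)
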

\begin{proof} The subspace of  $\mathfrak{h}^J/ \mathfrak{c}$ defined by the above relations has dimension $|\bar I|$ and  contains $\mathfrak{a}/\mathfrak{c}_a$ and hence it is equal to $\mathfrak{a}/\mathfrak{c}_a$.
\end{proof}

\begin{prop}\label{aire}  
Let $(.\, , .)_a$ be the restriction to $\mathfrak{m}$ of the invariant bilinear form $(.\, , .)$ of $\mathfrak{g}$.\\
1) Let $\mathfrak{a}'=\mathfrak{a}\cap\mathfrak{m}'$  and let $\mathfrak{a}''$ be a supplementary subspace of $\mathfrak{a}'$ in $\mathfrak{a}$ which is totally isotropic relatively to $(.\, , .)_a$. Then $\mathfrak{a}''\cap\mathfrak{h}' =\{0\}$. \\
2) Let $A_{I_{re}}$ be the submatrix of $A$ indexed by $I_{re}$. Then there exists a subspace $\mathfrak{h}_{I_{re}}$ of $\mathfrak{h}$ containing $\mathfrak{a}$ such that $(\mathfrak{h}_{I_{re}}$, $\Pi_{I_{re}},\Pi\check{_{I_{re}}})$ is a realization of $A_{I_{re}}$.
 In particular, the Kac-Moody subalgebra  $\mathfrak{g}(I_{re})$ associated to this realization (in \ref{realj}) contains the grading subalgebra $\mathfrak{m}$. \\
 3) The Kac-Moody algebra $\mathfrak{g}(I_{re})$ is finitely $\Delta({I_{re}})^{J_{re}}-$graded and its grading subalgebra is the subalgebra $\mathfrak g({I_{re}})^{J_{re}}$ associated to the $C-$admissible pair $(I_{re},J_{re})$ as in Proposition \ref{gJ}.
 \\
 4) The Kac-Moody algebra $\mathfrak g({I_{re}})^{J_{re}}$ contains $\mathfrak m$.
\end{prop}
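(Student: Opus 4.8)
The plan is to exhibit $\mathfrak m$ as generated by elements that manifestly lie in $\mathfrak g(I_{re})^{J_{re}}$. Recall that $\mathfrak g(I_{re})^{J_{re}}$ is, by definition (Proposition \ref{gJ} applied to the Kac-Moody algebra $\mathfrak g(I_{re})$ of part 2) and to the $C-$admissible pair $(I_{re},J_{re})$ of Corollary \ref{IreCadm}), the subalgebra of $\mathfrak g(I_{re})$ generated by $\mathfrak h(I_{re})^{J_{re}}=\Pi_{J_{re}}^{\perp}\cap\mathfrak h_{I_{re}}$ together with the elements $E_k,F_k$ (for $k\in I_{re}\setminus J_{re}=I'_{re}$) of the $\mathfrak{sl}_2-$triples attached to the irreducible pairs $(I_k,J_k)$. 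I would fix these triples to be exactly the ones $(E_k,H_k,F_k)$ produced in the proof of Proposition \ref{ire}, so that $\bar X_s=\sum_{l\in\Gamma_s}E_l$, $\bar Y_s=\sum_{l\in\Gamma_s}F_l$ and $\gamma\check{_s}=\sum_{l\in\Gamma_s}H_l$, where $(\bar X_s,\gamma\check{_s},\bar Y_s)$ is the $\mathfrak{sl}_2-$triple of $\mathfrak m$ associated with the simple root $\gamma_s$, $s\in\bar I$.

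First I would treat the root vectors. For every $s\in\bar I$ one has $\Gamma_s\subset I'_{re}$ (if $i\in\Gamma_s$ then $\rho_a(\alpha_i)=\gamma_s\in\Pi_a$), so each summand $E_l,F_l$ of $\bar X_s,\bar Y_s$ is one of the chosen generators of $\mathfrak g(I_{re})^{J_{re}}$; hence $\bar X_s,\bar Y_s\in\mathfrak g(I_{re})^{J_{re}}$. Next I would treat the Cartan subalgebra: by part 2) we have $\mathfrak a\subset\mathfrak h_{I_{re}}$, and for $a\in\mathfrak a$ and $j\in J_{re}\subset J$ one has $\langle\alpha_j,a\rangle=\rho_a(\alpha_j)(a)=0$, because $\rho_a(\alpha_j)=0$ by the very definition of $J$. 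Using the description of $\mathfrak h^{J_{re}}$ in Proposition \ref{AJhJ}, this says precisely that $\mathfrak a\subset\Pi_{J_{re}}^{\perp}\cap\mathfrak h_{I_{re}}=\mathfrak h(I_{re})^{J_{re}}\subset\mathfrak g(I_{re})^{J_{re}}$.

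Finally, the Kac-Moody algebra $\mathfrak m$ is generated by its Cartan subalgebra $\mathfrak a$ together with its Chevalley generators $\bar e_s,\bar f_s$ ($s\in\bar I$); since $\bar X_s$ and $\bar Y_s$ are nonzero scalar multiples of $\bar e_s$ and $\bar f_s$ (the root spaces $\mathfrak m_{\pm\gamma_s}$ being one-dimensional), $\mathfrak m$ is already generated by $\mathfrak a$ and the $\bar X_s,\bar Y_s$. Combining the two containments above then gives $\mathfrak m\subset\mathfrak g(I_{re})^{J_{re}}$, which is the assertion.

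I do not expect a genuine obstacle here: the argument is essentially bookkeeping. The one point requiring care is that $\mathfrak g(I_{re})^{J_{re}}$ must be built from the particular $\mathfrak{sl}_2-$triples $(E_k,H_k,F_k)$ of Proposition \ref{ire}, and not from arbitrary ones, so that the generators $\bar X_s=\sum_{l\in\Gamma_s}E_l$ of $\mathfrak m$ are visibly inside it; that this choice is admissible for the construction of Proposition \ref{gJ} is exactly what Proposition \ref{ire} 3) establishes, namely that each $(E_k,H_k,F_k)$ is an $\mathfrak{sl}_2-$triple attached to the irreducible $C-$admissible pair $(I_k,J_k)$ with $H_k$ the canonical element of $\mathfrak h_{I_k}$.
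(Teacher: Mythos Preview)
Your argument for part 4) is correct and is essentially the paper's own proof: show $\mathfrak a\subset\mathfrak h_{I_{re}}\cap\Pi_{J_{re}}^{\perp}=(\mathfrak h_{I_{re}})^{J_{re}}$, observe from the proof of Proposition \ref{ire} that each $\bar X_s,\bar Y_s$ is a sum of the chosen $E_k,F_k$ with $k\in\Gamma_s\subset I'_{re}$, and conclude that all generators of $\mathfrak m$ lie in $\mathfrak g(I_{re})^{J_{re}}$. The only cosmetic difference is that the paper writes $\Pi_J^{\perp}$ rather than $\Pi_{J_{re}}^{\perp}$, which is harmless since $\mathfrak a$ is annihilated by all of $\Pi_J$; your emphasis on fixing the $\mathfrak{sl}_2$-triples $(E_k,H_k,F_k)$ to be those of Proposition \ref{ire} is exactly the point the paper leaves implicit.
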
 

\begin{proof} $\,$\\
1) Recall that the center $\mathfrak{c}_{a}$ of $\mathfrak{m}$ is contained in the center $\mathfrak{c}$ of $\mathfrak{g}$.  Since
$\mathfrak{h}'=\mathfrak{c}^{\perp}$ and   
$\mathfrak{c}_{a}$ is in duality with $\mathfrak{a}''$ relatively to  $(.\, , .)_a$, we deduce that $\mathfrak{a}''\cap\mathfrak{h}' =\{0\}$.\\
2) From the proofs of \ref{Msym} and \ref{ire} we get $\gamma_s^\vee=\sum_{k\in\Gamma_s}\,H_k\in\sum_{k\in\Gamma_s}\,\mathfrak h_{I_k}=\mathfrak h'_{I_{re}}$. So
 $\mathfrak{c}_{a}\subset\mathfrak{a}'\subset\mathfrak{h'}_{I_{re}}\subset\mathfrak{h}'$. It follows that  $(\mathfrak{h}_{I_{re}}'+\mathfrak{h}^{I_{re}})$ is contained in  $\mathfrak{c}_{a}^{\perp}$ the orthogonal subspace of $\mathfrak{c}_{a}$ in $\mathfrak{h}$. Since $\mathfrak{a}''\cap\mathfrak{c}_{a}^{\perp}=\{0\}$, one can choose a supplementary subspace $\mathfrak{h}_{I_{re}}''$ of $(\mathfrak{h}_{I_{re}}'+\mathfrak{h}^{I_{re}})$ containing $\mathfrak{a}''$. Let $\mathfrak{h}_{I_{re}}=\mathfrak{h}_{I_{re}}'\oplus\mathfrak{h}_{I_{re}}''$, then, by Proposition \ref{realj},  $(\mathfrak{h}_{I_{re}}$, $\Pi_{I_{re}},\Pi\check{_{I_{re}}})$ is a realization of $A_{I_{re}}$.\\
3) As in Corollary \ref{IreCadm}, assertion 3) is a simple consequence of Theorem \ref{jgrad}.\\
4) The algebra $\mathfrak a$ is in $\mathfrak h_{I_{re}}\cap\Pi_J^\perp=(\mathfrak h_{I_{re}})^{J_{re}}$.
By the proof of Proposition \ref{ire}, for $s\in\overline I$, $\overline X_s$ and $\overline Y_s$ are linear combinations of the elements in $\{E_k,F_k\mid k\in\Gamma_s\}\subset \mathfrak g({I_{re}})^{J_{re}}$.
 Hence $\mathfrak g({I_{re}})^{J_{re}}$ contains all generators of $\mathfrak m$.
\end{proof}

\begin{lem}\label{sg} Let $\mathfrak{l}$ be a Kac-Moody subalgebra of $\mathfrak{g}$ containing $\mathfrak{m}$. Then $\mathfrak{l}$ is finitely $\Sigma-$graded. In particular,  the Kac-Moody subalgebra  $\mathfrak{g}(I_{re})$ or $\mathfrak g({I_{re}})^{J_{re}}$  is finitely $\Sigma-$graded.
\end{lem}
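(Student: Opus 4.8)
The plan is to verify directly the three conditions of Definition \ref{DefGrad} for the triple $(\mathfrak l,\Sigma,\mathfrak m)$. The point is that passing from $\mathfrak g$ to $\mathfrak l$ changes only the ambient space, so condition (i) --- that $\mathfrak m$ is a Kac-Moody subalgebra of $\mathfrak l$ whose root system relative to the Cartan subalgebra $\mathfrak a$ of $\mathfrak m$ equals $\Sigma$ --- is immediate from the hypothesis $\mathfrak m\subset\mathfrak l$ together with the data already fixed for the gradation $(\mathfrak g,\Sigma,\mathfrak m)$.

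For conditions (ii) and (iii), first I would note that $\mathfrak a\subset\mathfrak m\subset\mathfrak l$ and $\mathfrak l$ is a subalgebra of $\mathfrak g$, hence $[\mathfrak a,\mathfrak l]\subset\mathfrak l$: thus $\mathfrak l$ is an $\mathrm{ad}(\mathfrak a)$-stable subspace of $\mathfrak g$. Since $\mathrm{ad}(\mathfrak a)$ acts diagonalizably on $\mathfrak g=\sum_{\gamma\in\Sigma\cup\{0\}}V_\gamma$ with weights in $\Sigma\cup\{0\}$, its restriction to the invariant subspace $\mathfrak l$ is again diagonalizable and no new weights appear; more precisely $\mathfrak l$ is the direct sum of its intersections with the weight spaces,
$$\mathfrak l=\bigoplus_{\gamma\in\Sigma\cup\{0\}}(\mathfrak l\cap V_\gamma),$$
where $\mathfrak l\cap V_\gamma=\{x\in\mathfrak l\mid [a,x]=\langle\gamma,a\rangle x,\ \forall a\in\mathfrak a\}$ is precisely the weight space of $\mathfrak l$ for the weight $\gamma$. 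This gives (ii). For (iii), each $\mathfrak l\cap V_\gamma$ is a subspace of the finite dimensional space $V_\gamma$, hence finite dimensional. Therefore $\mathfrak l$ is finitely $\Sigma$-graded with grading subalgebra $\mathfrak m$.

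For the concluding ``in particular'', I would invoke Proposition \ref{aire}: by its parts 2) and 4), both Kac-Moody subalgebras $\mathfrak g(I_{re})$ and $\mathfrak g(I_{re})^{J_{re}}$ of $\mathfrak g$ contain the grading subalgebra $\mathfrak m$, so the general statement just proved applies to each of them.

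There is no genuine obstacle here; the only step worth stating with care is the elementary linear-algebra fact that an $\mathrm{ad}(\mathfrak a)$-stable subspace of a diagonalizable $\mathrm{ad}(\mathfrak a)$-module is the direct sum of its intersections with the weight spaces, so that the weight-space decomposition of $\mathfrak l$ is the one induced by that of $\mathfrak g$. Everything else is bookkeeping against Definition \ref{DefGrad}.
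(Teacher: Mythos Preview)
Your proof is correct and follows essentially the same approach as the paper's own proof: use that $\mathfrak l$ is $\mathrm{ad}(\mathfrak a)$-invariant to inherit the weight decomposition $\mathfrak l=\sum_{\gamma\in\Sigma\cup\{0\}}(V_\gamma\cap\mathfrak l)$ from $\mathfrak g$, then observe that each $V_\gamma\cap\mathfrak l$ is finite dimensional. The paper adds the remark that $\mathfrak m_\gamma\subset V_\gamma\cap\mathfrak l$ is nonzero for every $\gamma\in\Sigma$, which confirms that all of $\Sigma$ actually occurs as a weight, but this is already implicit in your verification of condition~(i).
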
 
\textbf{N.B.} Proposition \ref{aire} and Lemma \ref{sg} finish the proof  of Theorem 2. 
\begin{proof} Recall that the Cartan subalgebra $\mathfrak{a}$ of $\mathfrak{m}$ is ad$_{\mathfrak{g}}-$diagonalizable. Since $\mathfrak{l}$ is ad$(\mathfrak{a})-$invariant, one has  $\mathfrak{l}=\displaystyle\sum_{{\gamma}\in\Sigma \cup\{0\}}V_{{\gamma}}\cap\mathfrak{l}$. By assumption  $\{0\}\not=\mathfrak{m}_{\gamma}\subset V_{{\gamma}}\cap\mathfrak{l}$ for all $\gamma\in\Sigma$;   hence, we deduce that $\mathfrak{l}$ is finitely $\Sigma-$graded.
\end{proof}

\begin{prop}\label{imempty} If $\mathfrak{g}$ is of finite, affine or hyperbolic type, then $I'_{im}=\emptyset$ and $(I,J)$ is a   $C-$admissible pair.
\end{prop}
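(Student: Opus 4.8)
The plan is to dispose of the three types one at a time, showing in each case that $I'_{im}\neq\emptyset$ is impossible; once $I'_{im}=\emptyset$ is known, Remark \ref{gap}(3) gives $I=I_{re}$, $J=J_{re}$, and Corollary \ref{IreCadm} says that $(I,J)=(I_{re},J_{re})$ is $C$-admissible. The finite case is immediate: if $\mathfrak g$ is of finite type then $\Delta$, hence $\Sigma=\rho_a(\Delta)\setminus\{0\}$, is finite, so $\Sigma^{im}=\emptyset$; since $\rho_a(\alpha_k)\in\Sigma^+_{im}$ for every $k\in I'_{im}$ by Remark \ref{gap}(1), this forces $I'_{im}=\emptyset$.

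For the affine and hyperbolic cases I would start from the same observation: assuming $I'_{im}\neq\emptyset$, note that $I_{re}=I'_{re}\cup J_{re}$ is disjoint from $I'_{im}$, so $I_{re}\subsetneq I$. By Proposition \ref{aire}(2) the principal submatrix $A_{I_{re}}$ has a realization containing $\mathfrak a$, with associated Kac-Moody subalgebra $\mathfrak g(I_{re})\supset\mathfrak m$, and by Proposition \ref{aire}(3),(4) together with Lemma \ref{sg} this $\mathfrak g(I_{re})$ is finitely $\Sigma$-graded with grading subalgebra $\mathfrak m$; in particular $\Sigma\subset\rho_a(\Delta(I_{re}))$. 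Since $A$ is indecomposable and $I_{re}$ is a proper subdiagram, the classification of types gives: if $\mathfrak g$ is affine, every connected component of $A_{I_{re}}$ is of finite type; if $\mathfrak g$ is hyperbolic, every connected component of $A_{I_{re}}$ is of finite or affine type (a connected proper subdiagram of an affine diagram is of finite type, and a connected subdiagram of the disjoint union of finite and affine diagrams obtained by deleting one vertex of a hyperbolic diagram is of finite or affine type).

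In the affine case this already gives a contradiction: $\Delta(I_{re})$ is then finite, hence $\Sigma$ is finite, whereas $\mathfrak m$ is of affine type by Proposition \ref{st}(1) and so has an infinite root system. In the hyperbolic case I would argue with the invariant form. Since $A$ is hyperbolic it is non-singular and Lorentzian (\S\ref{1.1}), and then $\mathfrak m$ is Lorentzian (Proposition \ref{st}(2)), hence non-singular, so $\mathfrak m=\mathfrak m'$ and $\mathfrak a=\mathfrak a\cap\mathfrak m'=\mathfrak a'\subset\mathfrak h'_{I_{re}}=\bigoplus_{i\in I_{re}}\Complex\alpha_i^\vee$ (Proposition \ref{aire}(2) and its proof). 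Non-singularity of $A$ identifies $\mathfrak h_{\Real}$ with $\bigoplus_{i\in I}\Real\alpha_i^\vee$, so $\mathfrak a_{\Real}=\mathfrak a\cap\mathfrak h_{\Real}\subset\bigoplus_{i\in I_{re}}\Real\alpha_i^\vee$. On this latter space the Gram matrix of the invariant form in the basis $\{\alpha_i^\vee\}$ is $\big(\tfrac{(\alpha_i^\vee,\alpha_i^\vee)}{2}\,a_{j,i}\big)_{i,j\in I_{re}}$, a symmetrization of ${}^{t}A_{I_{re}}$ by a positive diagonal matrix, which is positive semi-definite precisely because every component of $A_{I_{re}}$ is of finite or affine type. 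Hence the restriction of the invariant form to $\mathfrak a_{\Real}$ is positive semi-definite, contradicting the fact that, $\mathfrak m$ being Lorentzian, this restriction has signature $(+\cdots+-)$.

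I expect the hyperbolic case to be the real work. Unlike the affine case one cannot conclude merely from $\Sigma$ being finite, since $\mathfrak g(I_{re})$ may well carry affine components and an infinite root system; the substitute is to show that $\mathfrak a_{\Real}$ actually sits inside the real coroot span $\bigoplus_{i\in I_{re}}\Real\alpha_i^\vee$ — which is where non-singularity of $A$ (to pin down $\mathfrak h_{\Real}$) and the inclusion $\mathfrak a=\mathfrak a'\subset\mathfrak h'_{I_{re}}$ from Proposition \ref{aire} are used — and then to exploit positive semi-definiteness of the invariant form on that span, a consequence of the classification of finite and affine generalized Cartan matrices.
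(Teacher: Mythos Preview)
Your argument is correct in all three cases. The finite and affine parts match the paper's proof almost exactly.

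For the hyperbolic case you take a genuinely different route. The paper simply reapplies Proposition~\ref{st}(1) to the gradation $(\mathfrak g(I_{re}),\Sigma,\mathfrak m)$ produced by Lemma~\ref{sg}: since $I_{re}\subsetneq I$ and $A$ is hyperbolic, every component of $A_{I_{re}}$ is of finite or affine type, so the type-matching of Proposition~\ref{st}(1) forces $\mathfrak m$ to be of finite or affine type, contradicting that $\mathfrak m$ is Lorentzian (hence indefinite) by Proposition~\ref{st}(2). Your approach bypasses this second use of Proposition~\ref{st}: you pin down $\mathfrak a_\Real$ inside $\bigoplus_{i\in I_{re}}\Real\alpha_i^\vee$ via the non-singularity of $A$ and the inclusion $\mathfrak a=\mathfrak a'\subset\mathfrak h'_{I_{re}}$, and then observe that the invariant form is positive semi-definite on this real coroot span, contradicting the Lorentzian signature on $\mathfrak a_\Real$. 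The paper's argument is shorter, but it tacitly needs Proposition~\ref{st}(1) to extend to a possibly decomposable $\mathfrak g(I_{re})$; your bilinear-form argument is more self-contained and sidesteps that issue entirely, at the cost of the extra verification that $\mathfrak a_\Real$ really lies in the real coroot span of $I_{re}$.
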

\begin{proof} The result is trivial if  $\mathfrak{g}$ is of finite type.  Suppose $I'_{im}\not=\emptyset$ for one of  the other cases. If  $\mathfrak{g}$ is  affine, then $I_{re}$ is of finite type and by Lemma \ref{aire}  , $\mathfrak{m}$ is contained in the finite dimensional semi-simple Lie algebra $\mathfrak{g}(I_{re})$. This contradicts the fact that $\mathfrak{m}$ is, as $\mathfrak{g}$, of affine type (see Proposition \ref{st}). If  $\mathfrak{g}$ is  hyperbolic, then it is Lorentzian and perfect (cf. section \ref{1.1}), 
and by Lemma \ref{sg}, $\mathfrak{g}(I_{re})$ is a finitely $\Sigma-$graded subalgebra of $\mathfrak{g}$. As $I_{re}$ is assumed to be a proper subset of $I$,  $\mathfrak{g}(I_{re})$ is  of finite or affine type. This contradicts Proposition \ref{st},  since $\mathfrak{m}$ should be Lorentzian (cf. \ref{st}).  
Hence, $I'_{im}=\emptyset$ in the two last cases. 
\end{proof}

\begin{prop}\label{sth} If $\mathfrak{g}$ is of  hyperbolic type, then the grading subalgebra $\mathfrak{m}$ is also of  hyperbolic type.
\end{prop}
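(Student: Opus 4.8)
The plan is to leverage the structural results already established, especially Proposition \ref{st} (same type, and Lorentzian is preserved) and Proposition \ref{imempty} (when $\mathfrak g$ is hyperbolic, $I'_{im}=\emptyset$, so $(I,J)$ is a genuine $C$-admissible pair with $I_{re}=I$, $J_{re}=J$, $\mathfrak g(I_{re})=\mathfrak g$). By Proposition \ref{st} we already know $\mathfrak m$ is of indefinite type and Lorentzian, so it remains only to upgrade "Lorentzian" to "hyperbolic". First I would recall the characterization of hyperbolicity available here: since $\mathfrak g$ is symmetrizable hyperbolic, by section \ref{1.1} it is Lorentzian and non-singular, and by item (6) of the Tits cone description, $\Delta^{im}=\{\alpha\in Q\setminus\{0\}\mid(\alpha,\alpha)\le 0\}$ and $\bar X\cup(-\bar X)=\{h\in\mathfrak h_\Real\mid(h,h)\le 0\}$. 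The strategy is to transfer this "the imaginary cone is exactly the non-positive cone" property from $\mathfrak g$ to $\mathfrak m$.

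The key steps, in order: (1) Fix adapted root bases as in Theorem \ref{adap}, so $J\subset I$ is of finite type and $\rho_a(\Delta^+)\subset\Sigma^+\cup\{0\}$; by Proposition \ref{imempty}, $I'_{im}=\emptyset$. (2) Identify $\mathfrak a/\mathfrak c_a$ with a subspace of $\mathfrak h/\mathfrak c$ (via Corollary \ref{eqCartan}, or more simply $\mathfrak a\subset\mathfrak h^J$), and note that by Corollary \ref{Msym} the restriction $(.\,,.)_a$ of the invariant form to $\mathfrak m$ is nondegenerate, and by Proposition \ref{st}.2 it has Lorentzian signature $(+\cdots+-)$ on $\mathfrak a_\Real$. (3) Use item (6) of the Tits cone section applied to $\mathfrak m$: since $\mathfrak m$ is Lorentzian with root lattice $Q_\Sigma=\Z\Pi_a$, one has $\Sigma^{im}\subset\{\gamma\in Q_\Sigma\setminus\{0\}\mid(\gamma,\gamma)_a\le0\}$ automatically (imaginary roots always have non-positive norm in a symmetrizable Kac-Moody algebra), so the real content is the reverse inclusion — equivalently, that the deletion of any vertex of the Dynkin diagram of $A^J$ leaves only finite or affine pieces. (4) To get this, I would transfer via $\rho_a$: given a proper connected subset $\bar K\subsetneq\bar I$, pull it back to $I_{re}$-side data, using that $(I,J)$ is $C$-admissible and that $\mathfrak g^J=\mathfrak m$'s ambient algebra is finitely $\Sigma$-graded; a subdiagram of the Dynkin diagram of $\mathfrak m$ of indefinite type would, by the admissibility dictionary (Proposition \ref{AJhJ} and Lemma \ref{ljc}), force a corresponding indefinite proper subdiagram of $\mathfrak g$, contradicting hyperbolicity of $\mathfrak g$. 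Alternatively, and perhaps more cleanly, use the Lorentzian criterion directly: a Lorentzian Kac-Moody algebra is hyperbolic iff every proper connected subdiagram is of finite or affine type, and combine with Lemma \ref{sg}.

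The main obstacle I anticipate is step (4): the reverse inclusion $\{\gamma\in Q_\Sigma\setminus\{0\}\mid (\gamma,\gamma)_a\le 0\}\subset\Sigma^{im}$, equivalently ruling out proper indefinite subdiagrams of $A^J$. The subtlety is that a proper subset $\bar K\subsetneq\bar I$ of the simple roots of $\mathfrak m$ corresponds, via the $C$-admissible pair $(I,J)$, to a subset $K\subsetneq I$ with $J\subset K$, $K\setminus J$ of finite type as a $C$-admissible configuration over $K$; the submatrix $A^J_{\bar K}$ is then $(A_K)^{J}$, the $C$-admissible contraction of $A_K$. One must argue that $(A_K)^J$ indefinite forces $A_K$ indefinite — this should follow because the $C$-admissible construction preserves type (an instance of the "same type" philosophy: $\mathfrak g(K)$ is finitely $\Delta(K)^J$-graded by Theorem \ref{jgrad}, and grading subalgebra and ambient algebra have the same type by Proposition \ref{st}.1 applied to $\mathfrak g(K)$). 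Then $A_K$ indefinite and $K\subsetneq I$ contradicts hyperbolicity of $A$. Assembling these implications carefully, while keeping track of which Cartan subalgebra each form is restricted to, is the delicate bookkeeping; the conceptual content is entirely contained in results already proved.
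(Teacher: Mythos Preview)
Your overall strategy matches the paper's: verify hyperbolicity of $\mathfrak m$ by checking that every proper connected subset $\bar I^1\subsetneq\bar I$ yields a sub-GCM of finite or affine type, pulling $\bar I^1$ back to a proper subset of $I$ and applying Proposition~\ref{st}. The gap is in your execution. In the ``main obstacle'' paragraph you conflate $\bar I$, the index set of $\Pi_a$, with $I'=I\setminus J$, the index set of $\Pi^J$. The $C$-admissible dictionary you invoke (Proposition~\ref{AJhJ}, Lemma~\ref{ljc}, Theorem~\ref{jgrad}) relates subsets of $I'$ to subsets of $I$; writing ``$A^J_{\bar K}=(A_K)^J$'' for $\bar K\subset\bar I$ only makes sense once one has identified $\bar I$ with $I'$, which is precisely what fails in general (each fiber $\Gamma_s$ may have several elements). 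As written, your argument proves that every proper sub-GCM of $A^J$ is of finite or affine type, i.e.\ that $\mathfrak g^J$ is hyperbolic --- a true and useful fact, but not the statement about $\mathfrak m$. In the extreme case $J=\emptyset$ (a maximal gradation), $\mathfrak g^J=\mathfrak g$ and your argument becomes vacuous, so one cannot close the loop by induction either.

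The paper bypasses $\mathfrak g^J$ and pulls back directly through the fibers $\Gamma_s$: for $\bar I^1\subsetneq\bar I$ connected, set $I^1=\bigcup_{s\in\bar I^1}\bigcup_{k\in\Gamma_s}I_k$, a proper subset of $I$. From the formulas $\gamma_s^\vee=\sum_{k\in\Gamma_s}H_k$, $\bar X_s=\sum_{k\in\Gamma_s}E_k$, $\bar Y_s=\sum_{k\in\Gamma_s}F_k$ obtained in Proposition~\ref{ire} one sees that (a suitable realization of) $\mathfrak m(\bar I^1)$ sits inside $\mathfrak g(I^1)$, and one checks directly that $\mathfrak g(I^1)$ is finitely $\Sigma(\bar I^1)$-graded with grading subalgebra $\mathfrak m(\bar I^1)$. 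Hyperbolicity of $\mathfrak g$ forces $\mathfrak g(I^1)$ to be of finite or affine type, and Proposition~\ref{st}(1) then gives the same for $\mathfrak m(\bar I^1)$. This is the argument you were reaching for; the missing ingredient is the pullback through the $\Gamma_s$, not through the $C$-admissible contraction alone.
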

\begin{proof} Recall that in this case, $I_{re}=I$ (see Proposition \ref{imempty} and Corollary \ref{IreCadm}).  Let ${\bar I}^1$ be a proper subset of $\bar I$ and suppose that ${\bar I}^1$ is connected. Let  $I^1=\displaystyle\mathop{\cup}_{s\in{\bar I}^1} (\mathop{\cup}_{k\in{\Gamma}_s}I_k)$. Then, $I^1$ is a proper subset of  $I$. We may assume that the subalgebra $\mathfrak{m}({\bar I}^1)$ of $\mathfrak{m}$ is contained in $\mathfrak{g}(I^1)$. Let $\Sigma^1:=\Sigma({\bar I}^1)$ be the root system of $\mathfrak{m}({\bar I}^1)$. Then, it is not difficult to check that $\mathfrak{g}(I^1)$ is $\Sigma^1-$graded. Since  $\mathfrak{g}(I^1)$ is of finite or affine type, we deduce, by Proposition \ref{st}, that $\mathfrak{m}({\bar I}^1)$ is of finite or affine type. Hence,  $\mathfrak{m}$ is hyperbolic.
\end{proof}

\begin{prop}\label{imempty2} If   $I'_{im}=\emptyset$, then $\mathfrak{g}(I_{re})=\mathfrak{g}$ and the  $C-$admissible subalgebra $\mathfrak{g}^{J}$ is  maximally finitely $\Sigma-$graded, with grading subalgebra $\mathfrak m$.
\end{prop}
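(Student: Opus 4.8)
The statement to prove is Proposition \ref{imempty2}: if $I'_{im}=\emptyset$, then $\mathfrak g(I_{re})=\mathfrak g$ and the $C-$admissible subalgebra $\mathfrak g^J$ is maximally finitely $\Sigma-$graded, with grading subalgebra $\mathfrak m$. The first assertion is immediate: by Remark \ref{gap}(3), $I'_{im}=\emptyset$ forces $J^\circ=\emptyset$, $I=I_{re}$, and $I'_{re}=I'=I\setminus J$; hence $\mathfrak g(I_{re})=\mathfrak g$ and $(I_{re},J_{re})=(I,J)$ is a $C-$admissible pair. So I only need to analyse the gradation of $\mathfrak g^J$ by $\Sigma$ with grading subalgebra $\mathfrak m$, and show it is \emph{maximal} in the sense of Definition \ref{GenMaxGrad}, i.e. that the subset ``$J$'' attached to this gradation (as in Theorem \ref{adap}/Theorem 2) is empty (and of course that $\mathfrak m$ genuinely grades $\mathfrak g^J$).

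\textbf{Step 1: $\mathfrak m$ finitely grades $\mathfrak g^J$.} By Proposition \ref{aire}(4) (with $I_{re}=I$, $J_{re}=J$), $\mathfrak g^J$ contains $\mathfrak m$; by Lemma \ref{sg} applied to $\mathfrak l=\mathfrak g^J$, the subalgebra $\mathfrak g^J$ is finitely $\Sigma-$graded with grading subalgebra $\mathfrak m$. So $(\mathfrak g^J,\Sigma,\mathfrak m)$ is a finite gradation in the sense of Definition \ref{DefGrad}, and we may apply to it the whole machinery of section \ref{GenGrad}.

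\textbf{Step 2: computing the ``$J$'' of the gradation $(\mathfrak g^J,\Sigma,\mathfrak m)$.} Write $\mathfrak g^J=\mathfrak g(A^J)$ with index set $I'=I\setminus J$, root basis $\Pi^J=\{\alpha'_k\mid k\in I'\}$, and root system $\Delta^J$ which (Theorem \ref{jgrad}) is identified with $\Delta'=\rho_{\mathfrak h^J}(\Delta)\setminus\{0\}$. The Cartan subalgebra $\mathfrak a$ of $\mathfrak m$ sits inside $\mathfrak h^J$ (indeed $\mathfrak a\subset \mathfrak h_{I_{re}}\cap\Pi_J^\perp=(\mathfrak h_{I_{re}})^{J_{re}}=\mathfrak h^J$ by the proof of \ref{aire}(4)), and the restriction map $\Delta^J\cup\{0\}\to\Sigma\cup\{0\}$ is again $\rho_a$. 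The subset attached to this gradation is $\widetilde J:=\{k\in I'\mid \rho_a(\alpha'_k)=0\}=\{k\in I'\mid \rho_a(\alpha_k)=0\}$. But $I'=I\setminus J$ is by definition exactly the set of $i\in I$ with $\rho_a(\alpha_i)\neq 0$; hence $\widetilde J=\emptyset$. It remains to check that $I'_{im}$ \emph{for this new gradation} is also empty, so that we are in the situation of Definition \ref{GenMaxGrad}: for $k\in I'=\widetilde{I'}$, $\rho_a(\alpha'_k)=\rho_a(\alpha_k)$, and since $k\in I'_{re}$ (as $I'=I'_{re}$ by Remark \ref{gap}(3)) this restriction lies in $\Pi_a$; so $\widetilde{I'_{im}}=\emptyset$ as well. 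Thus $\widetilde{I'_{im}}=\widetilde J=\emptyset$, which is precisely the definition of $\mathfrak g^J$ being maximally finitely $\Sigma-$graded.

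\textbf{Expected main obstacle.} The only non-formal point is verifying that the restriction map and root basis for the gradation $(\mathfrak g^J,\Sigma,\mathfrak m)$ are genuinely the restrictions of the data already fixed for $(\mathfrak g,\Sigma,\mathfrak m)$ — i.e. that the root basis $\Pi^J$ of $\Delta^J$ is ``adapted'' to $\Pi_a$ in the sense of the definition preceding Theorem \ref{adap}, so that the set $\widetilde J$ computed from an adapted basis really is $\{k\in I'\mid\rho_a(\alpha'_k)=0\}$. This follows because $\rho_a(\Delta^{J,+})=\rho_a(\rho_{\mathfrak h^J}(\Delta^+)\setminus\{0\})\subset\rho_a(\Delta^+)\subset\Sigma^+\cup\{0\}$, using that $\Pi_a$ was chosen adapted to $\Pi$ in Theorem \ref{adap} and that $\Delta^{J,+}$ corresponds to the restrictions of $\Delta^+$ (Theorem \ref{jgrad}). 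Once this compatibility is in hand, everything reduces to the elementary set-theoretic identities above, and the cross-reference to Definition \ref{GenMaxGrad} closes the argument.
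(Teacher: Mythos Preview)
Your proof is correct and follows essentially the same route as the paper's. The paper's own proof is a one-liner: it just asserts that $V_0\cap\mathfrak g^J=\mathfrak h^J$ and $\mathfrak m\subset\mathfrak g^J$ (citing Proposition~\ref{aire}), which is exactly your Step~1 plus the observation that no root of $\Delta^J$ restricts to zero on~$\mathfrak a$; your version is in fact more complete, since you explicitly verify that $\Pi^J$ is adapted to $\Pi_a$ and that $\widetilde{I'_{im}}=\emptyset$, points the paper leaves to the reader.
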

\begin{proof} This result is  due to J. Nervi (\cite{nerv2}; Thm 2.5.10) for the affine case; it follows from the facts that $V_0\cap \mathfrak{g}^{J}=\mathfrak{h}^{J}$ and $\mathfrak m\subset\mathfrak g^J$ (see Prop. \ref{aire}).
\end{proof}
\begin{cor}\label{classif} If $\mathfrak{g}$ is of finite, affine or hyperbolic type, the problem of classification of finite gradations of $\mathfrak{g}$  comes down first to classify the $C-$admissible pairs $(I,J)$ of  $\mathfrak{g}$ and then the maximal finite gradations of the corresponding admissible algebra $\mathfrak{g}^J$. 
\end{cor}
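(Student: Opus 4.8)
The plan is to reduce the classification of finite gradations to the two combinatorial/structural pieces already established in the previous results. First I would note that by Proposition \ref{imempty}, whenever $\mathfrak g$ is of finite, affine or hyperbolic type the set $I'_{im}$ is empty, so by Remark \ref{gap}(3) we have $I_{re}=I$, $J^\circ=\emptyset$, and $(I,J)=(I_{re},J_{re})$ is genuinely a $C-$admissible pair (in particular there is no need to invoke the weaker notion of generalized $C-$admissible pair). Thus every finite gradation $(\mathfrak g,\Sigma,\mathfrak m)$ determines, via Theorem \ref{adap} (i.e. part 1 of Theorem 2), a finite type subset $J\subset I$ with $\Delta_J=\{\alpha\in\Delta\mid\rho_a(\alpha)=0\}$, and then Corollary \ref{IreCadm} (part 2 of Theorem 2) tells us $(I,J)$ is $C-$admissible.

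Next I would invoke the two-level structure. By Theorem \ref{jgrad}, $\mathfrak g$ is finitely $\Delta^J-$graded with grading subalgebra $\mathfrak g^J$, the $C-$admissible subalgebra attached to $(I,J)$; and by Proposition \ref{imempty2}, in the case $I'_{im}=\emptyset$ the intermediate algebra $\mathfrak g^J$ is itself \emph{maximally} finitely $\Sigma-$graded with grading subalgebra $\mathfrak m$ (because $V_0\cap\mathfrak g^J=\mathfrak h^J$ forces the ``$J$'' of Theorem 2 applied to the pair $(\mathfrak g^J,\Sigma,\mathfrak m)$ to be empty). So any gradation of $\mathfrak g$ factors as: choose a $C-$admissible pair $(I,J)$, producing $\mathfrak g^J$, and then choose a maximal gradation of $\mathfrak g^J$ producing $\mathfrak m$ and $\Sigma$. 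Conversely, given a $C-$admissible pair $(I,J)$ and a maximal gradation of $\mathfrak g^J$ by some $\Sigma$ with grading subalgebra $\mathfrak m$, Lemma \ref{trans} (transitivity of finite gradations) shows $\mathfrak g$ is finitely $\Sigma-$graded with grading subalgebra $\mathfrak m$. This establishes the claimed equivalence.

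The one point requiring care—and the main obstacle—is checking that the two steps are genuinely independent, i.e. that distinct pairs $(\mathfrak g^J,\Sigma,\mathfrak m)$-data give distinct gradations of $\mathfrak g$ and that the decomposition is well-defined on isomorphism classes. For this I would rely on the fact that $J$ is intrinsically recovered from the gradation as $\{i\in I\mid\rho_a(\alpha_i)=0\}$ (Theorem 2, part 2), so the first-level datum $(I,J)$ is determined by $(\mathfrak g,\Sigma,\mathfrak m)$; and then $\mathfrak g^J$ is determined by $(I,J)$ up to the choice of $\mathfrak h^J$-complement, which does not affect the derived algebra. The remaining content—classifying maximal gradations of a given Kac-Moody algebra—is exactly what is taken up in section \ref{s4} via admissible quotient maps $\rho:I\to\overline I$, so I would close by pointing to Propositions \ref{4.1} and \ref{4.5} and Remark \ref{4.7} for that half, noting that the corollary is really a roadmap statement assembling Proposition \ref{imempty}, Theorem \ref{jgrad}, Proposition \ref{imempty2} and Lemma \ref{trans}.
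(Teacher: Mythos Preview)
Your proposal is correct and follows essentially the same approach as the paper: the paper's proof simply cites Proposition \ref{imempty}, Proposition \ref{imempty2} and Lemma \ref{trans}, which are exactly the three key ingredients around which you have built your argument. Your version is more expansive (unpacking Theorem \ref{adap}, Corollary \ref{IreCadm}, Theorem \ref{jgrad}, and adding the discussion of well-definedness and the forward reference to section \ref{s4}), but the logical skeleton is identical.
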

\begin{proof}
This follows from Proposition \ref{imempty}, Proposition \ref{imempty2} and Lemma \ref{trans}.
\end{proof}

\section{Maximal gradations}\label{s4} 

We assume now that $\mathfrak g$ is maximally finitely $\Sigma-$graded. We keep the notations in section \ref{GenGrad} but we have $J=I'_{im}=\emptyset$. So $\overline I$ is a quotient of $I$, with quotient map $\rho$ defined by $\rho_a(\alpha_k)=\gamma_{\rho(k)}$. For $s\in\overline I$, $\Gamma_s=\rho^{-1}(\{s\})$.

\begin{prop}\label{4.1} $\,$\\
1) If $k\neq l\in I$ and $\rho(k)=\rho(l)$, then there is no link between $k$ and $l$ in the Dynkin diagram of $A$: $\alpha_k(\alpha_l^\vee)=\alpha_l(\alpha_k^\vee)=0$ and $(\alpha_k,\alpha_l)=0$.
\\
2) $\mathfrak a\subset \{h\in\mathfrak h\mid\alpha_k(h)=\alpha_l(h)\text{ whenever } \rho(k)=\rho(l)\}$.
\\
3) For good choices of the simple coroots and Chevalley generators $(\alpha_k^\vee,e_k,f_k)_{k\in I}$ in $\mathfrak g$ and $(\gamma_s^\vee,\overline X_s,\overline Y_s)_{s\in\overline I}$ in $\mathfrak m$, we have $\gamma_s^\vee=\sum_{k\in\Gamma_s}\,\alpha_k^\vee$, $\overline X_s=\sum_{k\in\Gamma_s}\,e_k$ and $\overline Y_s=\sum_{k\in\Gamma_s}\,f_k$.
\\
4) In particular, for $s,t\in\overline I$, we have $\gamma_s(\gamma_t^\vee)=\sum_{k\in\Gamma_t}\,\alpha_i(\alpha_k^\vee)$ for any $i\in\Gamma_s$.
\end{prop}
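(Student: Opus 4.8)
The plan is to obtain all four assertions from Proposition \ref{ire} and its proof, specialized to the maximal case $J=I'_{im}=\emptyset$. In that case $I'=I'_{re}=I$, and for each $k\in I$ the set $I_k$ (the connected component of $J\cup\{k\}$ containing $k$) reduces to $\{k\}$ with $J_k=\emptyset$, so $\mathfrak g(I_k)=\mathfrak g(\{k\})=\Complex e_k\oplus\Complex\alpha_k^\vee\oplus\Complex f_k$ is the $\mathfrak{sl}_2$ attached to $\alpha_k$ and $\mathfrak h_{I_k}=\Complex\alpha_k^\vee$. Proposition \ref{ire} then furnishes, for each $k\in I$, an $\mathfrak{sl}_2$-triple $(E_k,H_k,F_k)$ in $\mathfrak g(\{k\})$ with $E_k\in\mathfrak g_{\alpha_k}$, $F_k\in\mathfrak g_{-\alpha_k}$, $H_k=[E_k,F_k]\in\mathfrak h_{I_k}$ and, by (\ref{hk}), $\langle\alpha_k,H_k\rangle=2$; since $\langle\alpha_k,\alpha_k^\vee\rangle=2$ and $\mathfrak h_{I_k}$ is one-dimensional, this forces $H_k=\alpha_k^\vee$, so in particular $E_k,F_k\neq0$. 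Moreover, for $s\in\overline I$, formulas (\ref{xbar}) and (\ref{hbar}) in the proof of Proposition \ref{ire} read $\overline X_s=\sum_{k\in\Gamma_s}E_k$, $\overline Y_s=\sum_{k\in\Gamma_s}F_k$ and $\gamma_s^\vee=[\overline X_s,\overline Y_s]=\sum_{k\in\Gamma_s}H_k=\sum_{k\in\Gamma_s}\alpha_k^\vee$, where $(\overline X_s,\gamma_s^\vee,\overline Y_s)$ is an $\mathfrak{sl}_2$-triple of $\mathfrak m$ attached to the simple root $\gamma_s$.

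Assertion 1) is then just Proposition \ref{ire}, part 2), specialized to singletons: if $k\neq l$ with $\rho(k)=\rho(l)=s$ then $|\Gamma_s|\geq2$, so $I_k\cup I_l=\{k,l\}$ is disconnected in the Dynkin diagram of $A$, i.e. $\alpha_l(\alpha_k^\vee)=a_{k,l}=0$ and $\alpha_k(\alpha_l^\vee)=a_{l,k}=0$; and the orthogonality $(\mathfrak g(I_k),\mathfrak g(I_l))=\{0\}$ gives $(\alpha_k^\vee,\alpha_l^\vee)=0$, hence $(\alpha_k,\alpha_l)=0$ since the form on $\mathfrak h^*$ is $\nu$-transported from $\mathfrak h$ and $\nu^{-1}(\alpha_i)$ is proportional to $\alpha_i^\vee$. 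Assertion 2) is immediate and independent of the rest: for $h\in\mathfrak a$ one has $\alpha_k(h)=\langle\rho_a(\alpha_k),h\rangle=\langle\gamma_{\rho(k)},h\rangle$, which depends on $k$ only through $\rho(k)$, so $\rho(k)=\rho(l)$ gives $\alpha_k(h)=\alpha_l(h)$.

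For assertion 3) the point is that the Chevalley data of $\mathfrak g$ are determined only up to the rescalings $e_k\mapsto c_ke_k$, $f_k\mapsto c_k^{-1}f_k$ ($c_k\in\Complex^\times$); since $\mathfrak g_{\pm\alpha_k}$ is one-dimensional and $[E_k,F_k]=H_k=\alpha_k^\vee$, the family $(\alpha_k^\vee,E_k,F_k)_{k\in I}$ is an admissible choice of simple coroots and Chevalley generators for $\mathfrak g$. Taking $e_k:=E_k$, $f_k:=F_k$ and keeping the triples $(\overline X_s,\gamma_s^\vee,\overline Y_s)$ of the first paragraph, those formulas become exactly $\gamma_s^\vee=\sum_{k\in\Gamma_s}\alpha_k^\vee$, $\overline X_s=\sum_{k\in\Gamma_s}e_k$ and $\overline Y_s=\sum_{k\in\Gamma_s}f_k$. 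Finally, assertion 4) is a one-line consequence: for $i\in\Gamma_s$ we have $\alpha_i|_{\mathfrak a}=\rho_a(\alpha_i)=\gamma_s$, while $\gamma_t^\vee=\sum_{k\in\Gamma_t}\alpha_k^\vee\in\mathfrak a$, so $\gamma_s(\gamma_t^\vee)=\langle\alpha_i,\gamma_t^\vee\rangle=\sum_{k\in\Gamma_t}\langle\alpha_i,\alpha_k^\vee\rangle=\sum_{k\in\Gamma_t}\alpha_i(\alpha_k^\vee)$.

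There is no serious obstacle here: the proof is essentially a repackaging of Proposition \ref{ire}. The only point requiring a little care is the bookkeeping in assertion 3) — one must check that setting $e_k=E_k$ simultaneously for all $k\in I$ is compatible with keeping the $\mathfrak m$-triples $(\overline X_s,\gamma_s^\vee,\overline Y_s)$; this is automatic, since those triples arose in the proof of Proposition \ref{ire} from arbitrary Chevalley data of $\mathfrak m$ and the construction only used the rescaling freedom.
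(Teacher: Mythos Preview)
Your proof is correct and follows essentially the same route as the paper: both reduce everything to Proposition \ref{ire} (and its proof) specialized to $J=I'_{im}=\emptyset$, so that $I_k=\{k\}$ and $\mathfrak g(I_k)=\Complex e_k\oplus\Complex\alpha_k^\vee\oplus\Complex f_k$, whence $H_k=\alpha_k^\vee$ and the formulas (\ref{xbar}), (\ref{hbar}) give exactly assertions 3) and 4). The paper cites Corollary \ref{eqCartan} for assertion 2) while you give the direct one-line argument, but these are the same observation.
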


\begin{proof} Assertions 1) and 2) are proved in \ref{ire} and \ref{eqCartan}.
 For $i\in\Gamma_s$, $\gamma_s=\rho_a(\alpha_i)$ is the restriction of $\alpha_i$ to $\mathfrak a$; so 4) is a consequence of 3).
 
 For 3) recall the proof of Proposition \ref{ire}. The $\mathfrak{sl}_2-$triple $(\overline X_s,\gamma_s^\vee,\overline Y_s)$ may be written $\gamma_s^\vee=\sum_{k\in\Gamma_s}\,H_k$, $\overline X_s=\sum_{k\in\Gamma_s}\,E_k$ and $\overline Y_s=\sum_{k\in\Gamma_s}\,F_k$ where $(E_k,H_k,F_k)$ is an $\mathfrak{sl}_2-$triple in $\mathfrak g(I_k)$, with $\alpha_k(H_k)=2$.
 But now $J=I'_{im}=\emptyset$, so $I_k=\{k\}$ and $\mathfrak g(I_k)=\Complex e_k\oplus\Complex\alpha_k^\vee\oplus\Complex f_k$, hence the result.
 \end{proof}
 
 \bigskip So the grading subalgebra $\mathfrak m$ may be entirely described by the quotient map $\rho$.
 \par We look now to the reciprocal construction.  
 \bigskip
\par So $\mathfrak g$ is an indecomposable and symmetrizable Kac-Moody algebra associated to a generalized Cartan matrix $A=(a_{i,j})_{i,j\in I}$. We consider a quotient $\overline I$ of $I$ with quotient map $\rho:I\to\overline I$ and fibers $\Gamma_s=\rho^{-1}(\{s\})$ for $s\in\overline I$. We suppose that $\rho$ is an admissible quotient i.e. that it satisfies the following two conditions: 
\medskip
\par
(MG1)  If $k\neq l\in I$ and $\rho(k)=\rho(l)$, then $a_{k,l}=\alpha_l(\alpha_k^\vee)=0$.

(MG2) If $s\neq t\in\overline I$, then $\overline a_{s,t}:=\displaystyle\sum_{i\in\Gamma_s}\,a_{i,j}=\sum_{i\in\Gamma_s}\,\alpha_j(\alpha_i^\vee)$ is independent of the choice of $j\in\Gamma_t$.
\medskip
\begin{prop}\label{4.2} The matrix $\overline A=(\overline a_{s,t})_{s, t\in\overline I}$ is an indecomposable generalized Cartan matrix.
\end{prop}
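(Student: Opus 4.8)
The goal is to check that $\overline A=(\overline a_{s,t})_{s,t\in\overline I}$ satisfies the three defining axioms of a generalized Cartan matrix (see \ref{1.1}) and is indecomposable. I would organize the verification axiom by axiom, using only the definition of $\overline a_{s,t}$ together with the conditions (MG1), (MG2) and the fact that $A=(a_{i,j})_{i,j\in I}$ is itself an indecomposable generalized Cartan matrix.

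First, the diagonal entries. For $s\in\overline I$ we have $\overline a_{s,s}=\sum_{i\in\Gamma_s}a_{i,j}$ for any fixed $j\in\Gamma_s$; choosing $j=i$ in each term is not allowed (the sum is over $i$ with $j$ fixed), so instead fix any $j\in\Gamma_s$ and split the sum: the term $i=j$ contributes $a_{j,j}=2$, and every term with $i\neq j$, $i\in\Gamma_s$, contributes $a_{i,j}=0$ by (MG1). Hence $\overline a_{s,s}=2$. Next, the off-diagonal sign condition: for $s\neq t$ and any $j\in\Gamma_t$, each $a_{i,j}$ with $i\in\Gamma_s$ satisfies $a_{i,j}\in\Z^-$ (here $i\neq j$ since $i\in\Gamma_s$, $j\in\Gamma_t$ and the fibers are disjoint), so $\overline a_{s,t}=\sum_{i\in\Gamma_s}a_{i,j}\in\Z^-$, in particular a nonpositive integer.

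For the third axiom I must show $\overline a_{s,t}=0\Rightarrow\overline a_{t,s}=0$. Since all the summands $a_{i,j}$ ($i\in\Gamma_s$) are $\le 0$, the equality $\overline a_{s,t}=0$ forces $a_{i,j}=0$ for every $i\in\Gamma_s$ and every (equivalently, one fixed) $j\in\Gamma_t$; but by (MG2) the value $\sum_{i\in\Gamma_s}a_{i,j}$ is independent of $j\in\Gamma_t$, and each individual $a_{i,j}\le 0$, so in fact $a_{i,j}=0$ for all $i\in\Gamma_s$, $j\in\Gamma_t$. Then by axiom (3) for $A$ we get $a_{j,i}=0$ for all such pairs, whence $\overline a_{t,s}=\sum_{j\in\Gamma_t}a_{j,i}=0$ for any $i\in\Gamma_s$. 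This is the one mildly delicate point: one must exploit that the vanishing of the \emph{sum} of nonpositive integers is equivalent to the vanishing of each term, and then feed that back through (MG2) so that ``there exists $j$'' upgrades to ``for all $j$''.

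Finally, indecomposability. Suppose $\overline I=\overline I_1\sqcup\overline I_2$ were a nontrivial decomposition with $\overline a_{s,t}=0$ whenever $s\in\overline I_1$, $t\in\overline I_2$ (or vice versa). By the argument just given this means $a_{i,j}=0$ for all $i\in\bigcup_{s\in\overline I_1}\Gamma_s$ and $j\in\bigcup_{t\in\overline I_2}\Gamma_t$. Setting $I_1=\rho^{-1}(\overline I_1)$ and $I_2=\rho^{-1}(\overline I_2)$ we obtain a nontrivial partition $I=I_1\sqcup I_2$ with $a_{i,j}=0$ between the blocks, contradicting the indecomposability of $A$. Hence $\overline A$ is indecomposable, completing the proof.
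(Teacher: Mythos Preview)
Your proof is correct and follows essentially the same route as the paper's own argument: verify $\overline a_{s,s}=2$ via (MG1), get $\overline a_{s,t}\in\Z^-$ for $s\neq t$ since the fibers are disjoint, use nonpositivity of the summands together with (MG2) to see that $\overline a_{s,t}=0$ forces $a_{i,j}=0$ for all $(i,j)\in\Gamma_s\times\Gamma_t$ (hence $\overline a_{t,s}=0$), and deduce indecomposability by pulling back a hypothetical decomposition of $\overline I$ to one of $I$. The paper's proof is terser but identical in substance; your treatment of the ``upgrade from one $j$ to all $j$'' step and of indecomposability simply spells out what the paper leaves implicit.
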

\begin{proof} Let $s\neq t\in\bar I$ and let  $j\in\Gamma_t$. By (MG1)  one has $\bar a_{t,t}=\sum_{i\in\Gamma_t}a_{i,j}= a_{j,j}=2$, and by (MG2) $\overline a_{s,t}:=\sum_{i\in\Gamma_s}\, a_{i,j}\in \Zmius$ ($\forall j\in\Gamma_t$). Moreover, $\overline a_{s,t}=0$  if and only if $a_{i,j}= 0\, (=a_{j,i})$, $\forall (i,j)\in\Gamma_s\times\Gamma_t$. It follows that $\overline a_{s,t}=0$  if and only if $\overline a_{t,s}=0$, and $\bar A$ is a generalized Cartan matrix. Since $A$ is indecomposable, $\bar A$ is also indecomposable.  
\end{proof}
\bigskip
\par Let $\mathfrak h^\Gamma=\{h\in\mathfrak h\mid\alpha_k(h)=\alpha_l(h)\text{ whenever } \rho(k)=\rho(l)\}$, $\gamma_s^\vee=\sum_{k\in\Gamma_s}\,\alpha_k^\vee$ and $\mathfrak a'=\oplus_{s\in\overline I}\,\Complex\gamma_s^\vee\subset\mathfrak h^\Gamma$. 
We may choose a subspace $\mathfrak a''$ in $\mathfrak h^\Gamma$ such that $\mathfrak a''\cap\mathfrak a'=\{0\}$, the restrictions $\overline\alpha_i=:\gamma_{\rho(i)}$ to $\mathfrak a=\mathfrak a'\oplus\mathfrak a''$ of the simple roots $\alpha_i$ (corresponding to different $\rho(i)\in \overline I$) are linearly independent and $\mathfrak a''$ is minimal for these two properties.

\begin{prop}\label{4.3} $(\mathfrak a,\{\gamma_s\mid{s\in\overline I}\},\{\gamma_s^\vee\mid{s\in\overline I}\})$ is a realization of $\overline A$.
\end{prop}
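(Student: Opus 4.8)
My plan is to verify the axioms of a realization of $\overline A$ --- a $\Complex$-space $\mathfrak a$ with linearly independent families $\{\gamma_s\mid s\in\overline I\}\subset\mathfrak a^*$ and $\{\gamma_s^\vee\mid s\in\overline I\}\subset\mathfrak a$ satisfying $\langle\gamma_t,\gamma_s^\vee\rangle=\overline a_{s,t}$ and $\dim\mathfrak a=|\overline I|+\mathrm{corank}(\overline A)$ --- one at a time. By Proposition \ref{4.2} we already know that $\overline A$ is an indecomposable generalized Cartan matrix. The $\gamma_s^\vee=\sum_{k\in\Gamma_s}\alpha_k^\vee$ are linearly independent because the fibers $\Gamma_s$ are pairwise disjoint and the simple coroots $\alpha_k^\vee$ are independent in $\mathfrak h$; the $\gamma_s$ are linearly independent on $\mathfrak a$ by the very minimality clause defining $\mathfrak a''$. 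Since $\mathfrak a\subset\mathfrak h^\Gamma$, the restriction $\alpha_j|_{\mathfrak a}$ depends only on $\rho(j)$, so $\gamma_t$ is well defined in $\mathfrak a^*$, and for any $j\in\Gamma_t$ one gets $\langle\gamma_t,\gamma_s^\vee\rangle=\alpha_j\big(\sum_{k\in\Gamma_s}\alpha_k^\vee\big)=\sum_{k\in\Gamma_s}a_{k,j}=\overline a_{s,t}$, the last equality being precisely (MG2) (and re-proving that the value is independent of $j$). So the only real content left is the dimension equality, i.e. $\dim\mathfrak a''=\mathrm{corank}(\overline A)$.

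For the lower bound I would note that the evaluation map $\mathfrak a'\to\Complex^{\overline I}$, $h\mapsto(\gamma_s(h))_s$, has matrix $(\overline a_{t,s})_{s,t\in\overline I}$, of rank $\mathrm{rank}(\overline A)$, hence a kernel of dimension $\mathrm{corank}(\overline A)$; any $\mathfrak a''$ for which $\{\gamma_s\}$ becomes independent on $\mathfrak a'\oplus\mathfrak a''$ must therefore have dimension $\geq\mathrm{corank}(\overline A)$. For the matching upper bound I would compute $\dim\mathfrak h^\Gamma$: it is the common kernel of the forms $\alpha_k-\alpha_l$ ($\rho(k)=\rho(l)$, $k\neq l$), which, using linear independence of the simple roots, span a subspace of $\mathfrak h^*$ of dimension $\sum_s(|\Gamma_s|-1)=|I|-|\overline I|$; hence $\dim\mathfrak h^\Gamma=\dim\mathfrak h-(|I|-|\overline I|)=|\overline I|+\mathrm{corank}(A)$, and the same coefficient bookkeeping shows the $\gamma_s$ stay independent, i.e. the evaluation map is already onto $\Complex^{\overline I}$ on $\mathfrak h^\Gamma$. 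Granting $\mathrm{corank}(A)\geq\mathrm{corank}(\overline A)$, I can then pick $h_1,\dots,h_r\in\mathfrak h^\Gamma$, with $r=\mathrm{corank}(\overline A)$, whose images complete the image of $\mathfrak a'$ to a basis of $\Complex^{\overline I}$; these are automatically independent, span a subspace $\mathfrak a''$ with $\mathfrak a''\cap\mathfrak a'=\{0\}$, and make $\{\gamma_s\}$ independent on $\mathfrak a=\mathfrak a'\oplus\mathfrak a''$, so the minimal such $\mathfrak a''$ has dimension exactly $\mathrm{corank}(\overline A)$.

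The step I expect to be the crux is the inequality $\mathrm{corank}(A)\geq\mathrm{corank}(\overline A)$ --- it is what guarantees that $\mathfrak h^\Gamma$ has enough room for a minimal $\mathfrak a''$. I plan to deduce it directly from (MG2) via the fibrewise-constant extension $\Complex^{\overline I}\to\Complex^{I}$, $c=(c_s)\mapsto\tilde c$ with $\tilde c_i=c_{\rho(i)}$: for every $j\in I$ one has $\sum_{i\in I}a_{i,j}\tilde c_i=\sum_{s\in\overline I}c_s\sum_{i\in\Gamma_s}a_{i,j}=\sum_{s\in\overline I}\overline a_{s,\rho(j)}c_s$, so this (clearly injective) map sends $\{c\mid{}^t\overline A\,c=0\}$ into $\{v\mid{}^tA\,v=0\}$; comparing dimensions gives $\mathrm{corank}(\overline A)\leq\mathrm{corank}(A)$. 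Assembling the pieces yields $\dim\mathfrak a=|\overline I|+\mathrm{corank}(\overline A)$, so that $(\mathfrak a,\{\gamma_s\},\{\gamma_s^\vee\})$ is a realization of $\overline A$.
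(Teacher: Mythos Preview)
Your proof is correct and more self-contained than the paper's. The paper's argument is very short: it notes that the $\gamma_s^\vee$ and $\gamma_s$ are free, that $\langle\gamma_t,\gamma_s^\vee\rangle=\overline a_{s,t}$, and then invokes general facts about realizations (Carter, Prop.~14.1--14.2; Kac, Exer.~1.3) to conclude that any such triple has $\dim\mathfrak a\geq 2|\overline I|-\ell$ and that a \emph{minimal} one attains equality. The existence of a suitable $\mathfrak a''\subset\mathfrak h^\Gamma$ is simply asserted in the sentence preceding the proposition.

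Your route is different in that you unpack these citations and, more importantly, justify the existence claim. You compute $\dim\mathfrak h^\Gamma=|\overline I|+\mathrm{corank}(A)$ and prove the key inequality $\mathrm{corank}(\overline A)\leq\mathrm{corank}(A)$ via the fibrewise-constant embedding $\ker({}^t\overline A)\hookrightarrow\ker({}^tA)$, which the paper does not make explicit. This is exactly what is needed to guarantee that $\mathfrak h^\Gamma$ is large enough to contain an $\mathfrak a''$ of the correct dimension $\mathrm{corank}(\overline A)$; without it, the phrase ``we may choose $\mathfrak a''$'' and the appeal to minimality inside $\mathfrak h^\Gamma$ would be incomplete. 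The trade-off is that the paper's approach is shorter and places the result in the standard framework of minimal realizations, while yours is elementary and stands on its own. One small remark: your sentence ``the same coefficient bookkeeping shows the $\gamma_s$ stay independent on $\mathfrak h^\Gamma$'' is correct but terse---the clean justification is that a relation $\sum_s c_s\gamma_s=0$ on $\mathfrak h^\Gamma$ forces $\sum_s c_s\alpha_{i_s}$ to lie in the span of the differences $\alpha_k-\alpha_l$ ($\rho(k)=\rho(l)$), and comparing fibrewise coefficient sums gives $c_s=0$.
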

\begin{proof} Let $\ell$ be the rank of $\overline A$. Note that $\mathfrak a$ contains $\mathfrak a'=\oplus_{s\in\overline I}\,\Complex\gamma_s^\vee$;  the family  $(\gamma_s)_{s\in\bar I}$ is  free  in the dual space $\mathfrak a^*$ of $\mathfrak a$ and  satisfies $\langle\gamma_t,\gamma_s^{\vee}\rangle=\bar a_{s,t}$, $\forall s,t\in\bar I$. It follows that $\dim(\mathfrak a)\geq 2|\bar I|-\ell$ (see [\cite{carter}; Prop. 14.1] or [\cite{vk}; Exer. 1.3]). As $\mathfrak a$ is minimal, we have $\dim(\mathfrak a)= 2|\bar I|-\ell$ (see  [\cite{carter};  Prop. 14.2] for minimal realization). Hence $(\mathfrak a,\{\gamma_s\mid{s\in\overline I}\},\{\gamma_s^\vee\mid{s\in\overline I}\})$ is a (minimal) realization of $\overline A$.  
\end{proof}
\par We note $\Delta^\rho=\Sigma\subset\oplus_{s\in\overline I}\,\Z\gamma_s$ the root system associated to this realization.
\par We define now $\overline X_s=\sum_{k\in\Gamma_s}\,e_k$ and $\overline Y_s=\sum_{k\in\Gamma_s}\,f_k$. Let $\mathfrak m=\mathfrak g^\rho$  be the Lie subalgebra of $\mathfrak g$  generated by $\mathfrak a$ and the elements $\overline X_s$, $\overline Y_s$ for $s\in\overline I$.
\begin{prop}\label{4.4} The Lie  subalgebra $\mathfrak m=\mathfrak g^\rho$   is the Kac-Moody algebra associated to the realization 
$(\mathfrak a,\{\gamma_s\mid{s\in\overline I}\},\{\gamma_s^\vee\mid{s\in\overline I}\})$ of $\overline A$.  Moreover, $\mathfrak g$ is an integrable $\mathfrak g^\rho-$module with finite multiplicities.
\end{prop}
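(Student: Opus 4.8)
The plan is to reproduce, in the degenerate case $J=\emptyset$ (so each $I_k=\{k\}$), the arguments of Lemma \ref{gki} and Propositions \ref{gJ} and \ref{finmult}. First I would verify the Chevalley relations for $(\mathfrak a,\{\overline X_s\},\{\overline Y_s\})$. Since $\mathfrak a\subset\mathfrak h$ we have $[\mathfrak a,\mathfrak a]=0$; as the fibres $\Gamma_s$ are pairwise disjoint, $[\overline X_s,\overline Y_t]=\sum_{k\in\Gamma_s,\,l\in\Gamma_t}[e_k,f_l]=\delta_{s,t}\sum_{k\in\Gamma_s}\alpha_k^\vee=\delta_{s,t}\gamma_s^\vee$; and for $h\in\mathfrak a\subset\mathfrak h^\Gamma$ one has $\langle\alpha_k,h\rangle=\langle\gamma_s,h\rangle$ for all $k\in\Gamma_s$, by the very definition of $\mathfrak h^\Gamma$ and of the restricted root $\gamma_s$, whence $[h,\overline X_s]=\langle\gamma_s,h\rangle\overline X_s$ and $[h,\overline Y_s]=-\langle\gamma_s,h\rangle\overline Y_s$. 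Taking $h=\gamma_s^\vee$, and using that $\langle\gamma_s,\gamma_s^\vee\rangle=\overline a_{s,s}=2$ (Propositions \ref{4.2} and \ref{4.3}), this shows $(\overline X_s,\gamma_s^\vee,\overline Y_s)$ is an $\mathfrak{sl}_2$-triple.

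The only point genuinely new with respect to \S\ref{s2} is the Serre relations, and here (MG1) does the work. For $k\neq l$ in one fibre $\Gamma_s$, (MG1) gives $a_{k,l}=0$, hence $[e_k,e_l]=0$ (and $[f_k,f_l]=0$); since each $\text{ad}(e_k)$ is locally nilpotent on $\mathfrak g$, the operator $\text{ad}(\overline X_s)=\sum_{k\in\Gamma_s}\text{ad}(e_k)$ is a sum of commuting locally nilpotent operators and so is locally nilpotent, and likewise $\text{ad}(\overline Y_s)$. As in Lemma \ref{gki}, $\mathfrak g$ is then an integrable module over $\mathfrak s(s):=\Complex\overline X_s\oplus\Complex\gamma_s^\vee\oplus\Complex\overline Y_s\cong\mathfrak{sl}_2(\Complex)$. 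For $s\neq t$ one has $[\overline X_s,\overline Y_t]=0$ and $[\gamma_s^\vee,\overline Y_t]=-\overline a_{s,t}\overline Y_t$ with $-\overline a_{s,t}\geq0$, so $\overline Y_t$ is a primitive weight vector lying in a finite-dimensional $\mathfrak s(s)$-submodule, and $\mathfrak{sl}_2$-theory yields $(\text{ad}\,\overline Y_s)^{1-\overline a_{s,t}}(\overline Y_t)=0$; symmetrically $(\text{ad}\,\overline X_s)^{1-\overline a_{s,t}}(\overline X_t)=0$.

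Consequently $\mathfrak g^\rho$ is a quotient of the Kac-Moody algebra attached to the realization $(\mathfrak a,\{\gamma_s\},\{\gamma_s^\vee\})$ of $\overline A$ provided by Proposition \ref{4.3}. Exactly as at the end of the proof of Proposition \ref{gJ}, the canonical surjection is the identity on the Cartan subalgebra $\mathfrak a$, so its kernel meets $\mathfrak a$ trivially and therefore vanishes by \cite[1.7]{vk}; this gives the first assertion. For the second, $\mathfrak a$ is $\text{ad}(\mathfrak g)$-diagonalizable and $\text{ad}\,\overline X_s$, $\text{ad}\,\overline Y_s$ are locally nilpotent, so $\mathfrak g$ is an integrable $\mathfrak g^\rho$-module; and its multiplicities are finite because $V_\gamma=\bigoplus_{\beta|_{\mathfrak a}=\gamma}\mathfrak g_\beta$, where a root $\beta=\sum_{i\in I}n_i\alpha_i$ restricts on $\mathfrak a$ to $\sum_{s}\bigl(\sum_{i\in\Gamma_s}n_i\bigr)\gamma_s$. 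If $\gamma=\sum_s c_s\gamma_s$ with the $c_s$ all of one sign, the equation $\beta|_{\mathfrak a}=\gamma$ forces $\sum_{i\in\Gamma_s}n_i=c_s$ for each $s$ with all $n_i$ of that sign, which has only finitely many solutions $(n_i)_{i\in I}$ since $I$ is finite; each $\mathfrak g_\beta$ being finite-dimensional, so is $V_\gamma$, and $V_0=\mathfrak h$ since the $\Gamma_s$ partition $I$ so no root restricts to $0$.

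The expected main difficulty is only bookkeeping: one must use (MG1) precisely where commutativity of the Chevalley generators within a fibre is needed (hence local nilpotency and the $\mathfrak s(s)$-module structure), and (MG2) precisely where $\overline A$ must be a generalized Cartan matrix and $\gamma_s^\vee$ must act on $\mathfrak a$ through $\overline A$. Once these are in place every step is parallel to the $C$-admissible case of \S\ref{s2}; in particular the finite-multiplicity argument is strictly simpler here than in Proposition \ref{finmult}, since there is no finite-type subset $J$ to control.
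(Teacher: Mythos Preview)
Your proof is correct and follows essentially the same path as the paper's: verify the Chevalley relations, establish the Serre relations, invoke \cite[1.7]{vk} to pass from quotient to isomorphism, and read off integrability and finite multiplicities from the restriction formula $\rho_a(\sum_i n_i\alpha_i)=\sum_s(\sum_{i\in\Gamma_s}n_i)\gamma_s$. The only noteworthy difference is in the Serre relations: you argue via the $\mathfrak{sl}_2$-module method of Proposition~\ref{gJ} (first using (MG1) to get local nilpotency of $\mathrm{ad}\,\overline X_s$, then primitivity of $\overline Y_t$), whereas the paper proceeds by a direct combinatorial computation---since (MG1) makes the $\mathrm{ad}\,e_i$, $i\in\Gamma_s$, commute, one expands $(\sum_{i\in\Gamma_s}\mathrm{ad}\,e_i)^{1-\overline a_{s,t}}(e_j)$ multinomially and uses $1-\overline a_{s,t}\geq 1-a_{i,j}$ together with a pigeonhole argument to kill every term; both routes are short and equally valid.
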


\begin{proof}
Clearly, the following relations hold  in the Lie subalgebra $\mathfrak g^\rho$ : 
 
$$
\begin{array}{lll}
[\mathfrak{a}, \mathfrak{a}]=0, & [\overline X_s,\overline Y_t]=\delta_{s,t}\gamma_s^\vee & (s,t \in \bar I);\\
 
[a, \overline X_s] = \langle\gamma_s, a\rangle \overline X_s, & [a,\overline Y_s]= -\langle\gamma_s, a\rangle \overline Y_s & (a \in \mathfrak{a},  s\in \bar I).\\
 &  &
 \end{array}
$$
For the Serre's relations, one has : $$1-\overline a_{s,t}\geq 1-a_{i,j}, \; \forall (i,j)\in\Gamma_s\times\Gamma_t.$$ 
In particular, one can see, by induction on $|\Gamma_s|$, that :$$(\text{ad}\overline X_s)^{1-\overline a_{s,t}}(e_j) =(\sum_{i\in\Gamma_s}\text{ad}e_i)^{1-\overline a_{s,t}}(e_j)=0, \; 
\forall j\in \Gamma_t.$$ Hence $$(\text{ad}\overline X_s)^{1-\overline a_{s,t}}(\overline X_t)=0, \; \forall s, t \in\overline I,$$ and in the same way we obtain that :  
$$(\text{ad}\overline Y_s)^{1-\overline a_{s,t}}(\overline Y_t)=0, \; \forall s, t \in\overline I.$$
 It follows 
that  $\mathfrak g^\rho$ is a quotient of the Kac-Moody algebra  $\mathfrak{g}(\overline A)$ associated to $\overline A$ and $(\mathfrak a,\{\gamma_s\mid{s\in\overline I}\},\{\gamma_s^\vee\mid{s\in\overline I}\})$ in which the Cartan subalgebra $\mathfrak a$ of  $\mathfrak{g}(\overline A)$ is embedded. By \cite[1.7]{vk}  $\mathfrak g^\rho$ is equal to  $\mathfrak{g}(\overline A)$.\\ 
It's clear that $\mathfrak g$ is an integrable $\mathfrak g^\rho-$module with finite dimensional weight spaces relative to the adjoint action of $\mathfrak a$, since for $\alpha=\sum_{i\in I}n_i\alpha_i\in \Delta^+$, its restriction to $\mathfrak a$, is given by  \begin{equation}\label{rhoa}\rho_a(\alpha)=\sum_{s\in\bar I}\big(\sum_{i\in\Gamma_s}n_i\big)\,\gamma_s
\end{equation} 
\end{proof}

\begin{prop}\label{4.5} The Kac-Moody algebra $\mathfrak g$ is maximally finitely $\Delta^\rho-$graded with grading subalgebra $\mathfrak g^\rho$.
\end{prop}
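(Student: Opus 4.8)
The statement to prove is Proposition \ref{4.5}: for an admissible quotient $\rho:I\to\overline I$, the Kac-Moody algebra $\mathfrak g$ is maximally finitely $\Delta^\rho$-graded with grading subalgebra $\mathfrak g^\rho$. The strategy is to verify the three conditions of Definition \ref{DefGrad} for the pair $(\mathfrak g,\mathfrak g^\rho)$ and then check that the associated "$J$" and "$I'_{im}$" (in the sense of Theorem 2 / Definition \ref{GenMaxGrad}) are both empty, so that the gradation is maximal. Almost all the work has already been done: Proposition \ref{4.4} tells us that $\mathfrak g^\rho$ is the Kac-Moody algebra attached to the realization $(\mathfrak a,\{\gamma_s\},\{\gamma_s^\vee\})$ of $\overline A$, whose root system is by definition $\Delta^\rho=\Sigma$, and that $\mathfrak g$ is an integrable $\mathfrak g^\rho$-module with finite multiplicities.

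\textbf{Key steps.} First I would note that condition (i) of Definition \ref{DefGrad} is exactly the statement of Proposition \ref{4.4}: $\mathfrak g^\rho$ is a Kac-Moody subalgebra of $\mathfrak g$ whose root system relative to the Cartan subalgebra $\mathfrak a$ is $\Delta^\rho$. Second, conditions (ii) and (iii) follow from the "integrable $\mathfrak g^\rho$-module with finite multiplicities" assertion of Proposition \ref{4.4}: since $\mathrm{ad}(\mathfrak a)$ is diagonalizable on $\mathfrak g$ (the $\overline X_s,\overline Y_s$ act locally nilpotently and $\mathfrak a\subset\mathfrak h$ acts diagonally), $\mathfrak g$ decomposes as $\sum_{\gamma}V_\gamma$ over the weights $\gamma$; formula \eqref{rhoa} identifies the nonzero weights occurring as restrictions $\rho_a(\alpha)$ for $\alpha\in\Delta$, and one must observe that this weight set is precisely $\Delta^\rho\cup\{0\}$ — the inclusion $\rho_a(\Delta)\subseteq\Delta^\rho\cup\{0\}$ plus the fact that each simple root $\gamma_s=\rho_a(\alpha_k)$ (for $k\in\Gamma_s$) is attained, together with stability under the Weyl group action, gives equality, as in the proof of Theorem \ref{jgrad}. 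The finite-dimensionality in (iii) is the "finite multiplicities" half of Proposition \ref{4.4} (the weight-space $V_0=\mathfrak h\oplus(\text{roots killed by }\rho_a)$ is finite dimensional because, by (MG1), no $\alpha_j$ restricts to $0$ unless... — actually here $J=\emptyset$, so $\rho_a(\alpha_i)\neq0$ for all $i$ and $V_0=\mathfrak h$, which is finite dimensional; the general $V_\gamma$ is controlled by the finitely many $\alpha\in\Delta$ with prescribed $\sum_{i\in\Gamma_s}n_i$ for each $s$, the argument being the height-counting one of Proposition \ref{finmult}).

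\textbf{Maximality.} Finally, to see that the gradation is \emph{maximal} in the sense of Definition \ref{GenMaxGrad}, I would run the analysis of Section \ref{GenGrad} for this particular gradation $(\mathfrak g,\Delta^\rho,\mathfrak g^\rho)$ and check that the resulting set $J=\{i\in I\mid\rho_a(\alpha_i)=0\}$ is empty — which is immediate, since $\rho_a(\alpha_i)=\gamma_{\rho(i)}\neq0$ for every $i$ — and that $I'_{im}=\{i\in I\mid\rho_a(\alpha_i)\notin\Pi_a\cup\{0\}\}$ is empty — which holds because $\rho_a(\alpha_i)=\gamma_{\rho(i)}\in\Pi_a$ is a simple root of $\Sigma$ by construction. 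With $J=I'_{im}=\emptyset$ the definition of "maximally finitely $\Sigma$-graded" is met. One subtlety worth spelling out: the root basis $\Pi_a=\{\gamma_s\}$ chosen here is automatically adapted to $\Pi$, since by \eqref{rhoa} $\rho_a(\Delta^+)\subset\sum_s\Zplus\gamma_s$, so $\rho_a(\Delta^+)\subset\Sigma^+\cup\{0\}$; this is what licenses applying the Section \ref{GenGrad} formalism with these specific bases.

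\textbf{Main obstacle.} There is no deep obstacle — the proposition is essentially a repackaging of Proposition \ref{4.4} — but the one point requiring genuine care is the identification of the weight set of the $\mathfrak a$-action on $\mathfrak g$ with $\Delta^\rho\cup\{0\}$ rather than merely a subset of it; this needs the same "three defining properties of a positive system" argument ($\Pi_a\subset$ weights, the string property from integrability via \cite[Prop. 3.6]{vk}, and connectedness of supports, the latter using (MG1)–(MG2) to transport connectedness from $A$ to $\overline A$) that was used in Theorem \ref{jgrad}. I would therefore structure the proof to invoke Theorem \ref{jgrad}'s argument almost verbatim, replacing $(I,J,\mathfrak h^J,\Delta^J)$ by $(I,\emptyset,\mathfrak a,\Delta^\rho)$ and the pair-based restriction map by $\rho_a$.
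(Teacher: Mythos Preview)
Your proposal is correct and takes essentially the same approach as the paper: invoke Proposition \ref{4.4} for conditions (i)--(iii) of Definition \ref{DefGrad}, establish the weight-set equality $\rho_a(\Delta^+)=\Delta^\rho_+$ by verifying the three characterizing properties of a positive root system (\cite{vk}, Ex.~5.4) exactly as in Theorem \ref{jgrad}, and then read off $J=I'_{im}=\emptyset$ from $\rho_a(\alpha_i)=\gamma_{\rho(i)}\in\Pi_a$. The paper's proof is precisely your ``Main obstacle'' paragraph carried out, including the transport of support-connectedness from $A$ to $\overline A$ via (MG1)--(MG2).
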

\begin{proof}  As in Theorem \ref{jgrad}, we will see that $\rho_a(\Delta^+)\subset Q_+^\Gamma:=\displaystyle\mathop{\oplus}_{s\in\bar I}\Z^+\gamma_s$ satisfies, as $\Sigma^+=\Delta^\rho_+$,  the following conditions : 
\\
(i) $\gamma_s\in\rho_a(\Delta^+)\subset Q_+^\Gamma$, $2\gamma_s\notin\rho_a(\Delta^+)$, $\forall s\in \bar I$.
\\
(ii) if $\gamma\in\rho_a(\Delta^+)$, $\gamma\not=\gamma_s$, then the set $\{\gamma+k\gamma_s; k\in\Z\}\cap \rho_a(\Delta^+)$ is a string $\{\gamma-p\gamma_s, ...., \gamma+q\gamma_s\}$, where $p,q\in\Zplus$ and $p-q=\langle \gamma,\gamma_s^\vee\rangle$;
\\
(iii) if $\gamma\in\rho_a(\Delta^+)$, then supp$(\gamma)$ is connected.\\
 Clearly $\{\gamma_s\mid{s\in\overline I}\}\subset\rho_a(\Delta_+)\subset Q^\Gamma_+$. 
For $\alpha\in \Delta$ and $s\in \bar I$, the condition $\rho_a(\alpha)\in\Nat\gamma_s$ implies $\alpha\in \Delta(\Gamma_s)^+=\{\alpha_i; \; i\in\Gamma_s\}$ [see (\ref{rhoa})]. 
 It follows that $2\gamma_s\notin  \rho_a(\Delta_+)$ and (i) is satisfied.  By Proposition \ref{4.4}, $\mathfrak{g}$ is an integrable $\mathfrak g^\rho-$module with finite multiplicities. Hence,  the propriety (ii) follows from [\cite{vk}; prop.3.6]. 
Let  $\alpha\in\Delta_+$ and let   $s,t\in$ supp$(\rho_a(\alpha))$. By (\ref{rhoa}) there exists  $(k,l)\in\Gamma_s\times\Gamma_t$ such that $k,l \in$ supp$(\alpha)$, which is connected. Hence there exist $i_0=k, i_1 ,  ...., i_{n+1}= l$ such that $\alpha_{i_j}\in$ supp$(\alpha)$, $j=0, 1, ...., n+1$, and for  $j=0,1, ..., n$,  $i_j$ and $i_{j+1}$ are linked  relative to the generalized Cartan matrix $A$. In particular, $\rho(i_j)\neq \rho(i_{j+1})\in$ supp$(\rho_a(\alpha))$ and they  are linked relative to the generalized Cartan matrix $\overline A$, $j=0,1, ..., n$, with  $\rho(i_0)=s$ and  $\rho(i_{n+1})=t$.  Hence the connectedness of   supp$(\rho_a(\alpha))$ relative to $\overline A$. 
It follows that $\rho_a(\Delta^+)=\Delta^\rho_+$ and hence  $\rho_a(\Delta)=\Delta^\rho$ (see \cite{vk}, Ex. 5.4). In particular, $\mathfrak g$ is  finitely $\Delta^\rho-$graded with $J=\emptyset=I'_{im}$. \\ 
\end{proof}
\begin{cor}\label{4.6} The restriction to $\mathfrak m=\mathfrak g^\rho$ of the invariant bilinear form $(.\, , \, .)$ of  $\mathfrak g$ is  nondegenerate. In particular, the generalized Cartan matrix $\overline A$ is symmetrizable of the same type as $A$.
\end{cor}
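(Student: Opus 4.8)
The plan is to recognise this corollary as a special case of the general theory of Section~\ref{GenGrad} and to read off the conclusion from results already proved there. By Proposition~\ref{4.5} the triple $(\mathfrak g,\Delta^\rho,\mathfrak g^\rho)$ is a finite gradation in the sense of Definition~\ref{DefGrad}; its grading subalgebra $\mathfrak m=\mathfrak g^\rho$ is indecomposable since $\overline A$ is (Proposition~\ref{4.2}), and its Cartan subalgebra $\mathfrak a$ is contained in $\mathfrak h$ by construction. Hence the running hypotheses of Section~\ref{GenGrad} are satisfied with ``$\Sigma$''$=\Delta^\rho$ and ``$\mathfrak m$''$=\mathfrak g^\rho$, and I would invoke Corollary~\ref{Msym} directly: it gives that the restriction $(.\, , \, .)_a$ of $(.\, , \, .)$ to $\mathfrak g^\rho$ is nondegenerate and that $\mathfrak g^\rho$ — equivalently $\overline A$ — is symmetrizable. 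That $\overline A$ has the same type (finite, affine or indefinite) as $A$ is then part~1) of Proposition~\ref{st}.

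Alternatively, to keep the argument self-contained, I would simply reproduce the one-line computation behind Corollary~\ref{Msym} in the present notation. By construction $\gamma_s^\vee=\sum_{k\in\Gamma_s}\alpha_k^\vee$, and for $k\neq l$ in the same fibre $\Gamma_s$ condition (MG1) together with axiom~(3) of a generalized Cartan matrix forces $a_{k,l}=a_{l,k}=0$, so that $(\alpha_k^\vee,\alpha_l^\vee)=\frac{(\alpha_k^\vee,\alpha_k^\vee)}{2}\langle\alpha_k,\alpha_l^\vee\rangle=0$. Using condition~\ref{bf} we then get
$$(\gamma_s^\vee,\gamma_s^\vee)_a=\sum_{k\in\Gamma_s}(\alpha_k^\vee,\alpha_k^\vee)>0\qquad(s\in\overline I).$$
Since $(.\, , \, .)_a$ is an invariant symmetric bilinear form on the Kac-Moody algebra $\mathfrak g^\rho$ which is positive on every simple coroot $\gamma_s^\vee$, the remarks of~\S\ref{ibf} show that it is nondegenerate and that $\overline A$ is symmetrizable; the statement about the type again follows from Proposition~\ref{st}.

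I do not expect a real obstacle here. The only point requiring attention is the bookkeeping one: checking that the maximal gradation produced in Propositions~\ref{4.3}--\ref{4.5} genuinely meets the hypotheses under which Section~\ref{GenGrad} operates — indecomposability of the grading subalgebra, $\mathfrak a\subset\mathfrak h$, and the finite-multiplicity condition — and making sure the Cartan subalgebras chosen in Proposition~\ref{4.3} are the ones with respect to which Corollary~\ref{Msym} and Proposition~\ref{st} are applied. All three hypotheses have already been established, so the proof reduces to citing them coherently.
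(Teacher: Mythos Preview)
Your proposal is correct and follows exactly the paper's own proof: the paper simply cites Proposition~\ref{4.5} together with Corollary~\ref{Msym} for the nondegeneracy and symmetrizability, and Proposition~\ref{st} for the type. Your alternative explicit computation of $(\gamma_s^\vee,\gamma_s^\vee)_a>0$ is just the specialization to $J=\emptyset$ of the proof of Corollary~\ref{Msym}, so it adds nothing new but is also harmless.
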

 
\begin{proof} The first part of the corollary  follows form Proposition \ref{4.5} and Corollary \ref{Msym}. 
The second part follows form Proposition \ref{st}.
\end{proof}

\begin{rem}\label{4.7}
 The map $\rho$ coincides with the map (also denoted $\rho$) defined at the beginning of this section using the maximal gradation of Proposition \ref{4.5}.
 Conversely Proposition \ref{4.1} tells that, for a general maximal gradation, $\rho$ is admissible and $\mathfrak m=\mathfrak g^\rho$ for good choices of the Chevalley generators.
 So we get a good correspondence between maximal gradations and admissible quotient maps.
 \end{rem}

\section{An example}\label{s5}

The following example shows that generalized  $C-$admissible pairs do 
 exist. It shows in particular  that, for a generalized  $C-$admissible pair $(I,J)$,  $J^{\circ}$ may be nonempty and $I_{re}$ may be non connected. Moreover, the Kac-Moody algebra may  be not graded by the root system of $\mathfrak{g}(I_{re})$.\\
Gradations revealing generalized $C-$admissible pairs will be studied in a forthcoming paper.  

\begin{exa}
Consider the Kac Moody algebra  $\mathfrak{g}$ corresponding to the indecomposable and symmetric generalized Cartan matrix $A$  : 
$$A=\left(\begin{array}{cccccc} 
2 & -3 & -1& 0 & 0 & 0 \\ 
-3 & 2 & -1 & 0 & 0 & 0 \\ 
-1 & -1 & 2 & -1 & -1 & -1 \\ 
0 & 0 & -1 & 2 & 0 & 0 \\ 
0 & 0 & -1 & 0 & 2& -3\\ 
0 & 0 & -1 & 0 & -3& 2 
\end{array}      \right)$$
with the corresponding Dynkin diagram :
\begin{center} $\,$
\hbox to 4,2cm{\lower -9pt
\vtop {\offinterlineskip \hbox to 1.2cm{\lower 2pt\hbox{\hglue -1pt${}^{_3}\displaystyle
\mathop{\bullet}^{_1}$}
\hglue -4pt\hrulefill  }
\vskip -1pt
\hbox to 0.5cm{\hfill \vrule height 14pt  width 2pt\hfill}
\hbox to 1.2cm{ 
\lower 2pt
\hbox{\hglue -4pt${}_{_3}\displaystyle\mathop{\bullet}_{^2}$} 
\hglue -4pt\hrulefill } }\lower 2pt\hbox{\hglue 2pt
\vtop{ \offinterlineskip \hbox{$\displaystyle\mathop{\bullet}^{^3}$}
\vskip -1pt
\hbox to 5pt{\hfill\vrule height 10pt width 0,3pt\hfill}
\hbox{$\displaystyle\mathop{\bullet} _{_4}$\hfill}
} }
\hglue -44pt \lower 2pt\hbox{$\Big)$\hglue-2pt$\Big($}\lower -9pt
\vtop {\offinterlineskip \hbox to 1.2cm{
\hglue -4pt\hrulefill\lower 2pt\hbox{\hglue -1pt$\displaystyle
\mathop{\bullet}^{_5}{}^{_3}$}  }
\vskip -1pt
\hbox to 1.5cm{\hfill \vrule height 14pt  width 2pt\hfill}
\hbox to 1.2cm{
\hglue -4pt\hrulefill
\lower 2pt
\hbox{\hglue -1pt$\displaystyle\mathop{\bullet}_{^6}{}_{^3}$}  } } }
\end{center}
Note that $\det(A)=275$ and the the symmetric submatrix of $A$ indexed by $\{1,2, 4,5,6\}$ has signature $(+++,- -)$. Since $\det(A)>0$, the matrix $A$ should have signature $(++++,- -)$. Let $\Sigma$ be the root system associated to the strictly hyperbolic generalized Cartan matrix $ \left(\begin{array}{cc} 
2 & -3   \\ 
-3 & 2 
\end{array}      \right)$, the corresponding Dynkin diagram is the following : 
\begin{center} 
 $H_{3,3}\qquad$   \hbox{\offinterlineskip\lower 2pt
\hbox{\hglue -2pt \hbox{$\displaystyle\mathop{\bullet}_{^3}^1$}\hglue -3pt \vbox{  {
\hrule height 2pt width 1.5cm}\vskip 1.4pt}
\hglue -5pt \hbox{$\displaystyle\mathop{\bullet}_{^3}^2$} } }  
\end{center}  We will see that $\mathfrak{g}$ is finitely $\Sigma-$graded and describe the corresponding generalized 
$C-$admissible pair.
\end{exa}
\textbf{1)} Let $\tau $ be the involutive diagram automorphism of $\mathfrak{g}$ such that $\tau(1)=5$, $\tau(2)=6$ and $\tau$ fixes the other vertices. Let $\sigma_n'$ be the normal semi-involution of $\mathfrak{g}$ corresponding to the split real form of $\mathfrak{g}$. Consider the quasi-split real form  $\mathfrak{g}_{\Real}^{1}$ associated to the semi-involution $\tau\sigma_n'$ (see \cite{b3r} or \cite{bm}). Then $\mathfrak{t}_{\Real}:=\mathfrak{h}_{\Real}^{\tau}$ is a maximal split toral subalgebra of $\mathfrak{g}_{\Real}^{1}$. 
The corresponding restricted root system $\Delta':=\Delta(\mathfrak{g}_{\Real}, \mathfrak{t}_{\Real})$ is reduced and the corresponding generalized Cartan matrix $A'$ is given by : 
$$A'=\left(\begin{array}{cccc} 
2 & -3 & -2& 0  \\ 
-3 & 2 & -2 & 0  \\ 
-1 & -1 & 2 & -1 \\ 
0  & 0  & -1 & 2  
\end{array}      \right)$$
with the corresponding Dynkin diagram  :  
\begin{center} $\,$
\hbox{\lower -16pt
\hbox{\lower 16.5pt\hbox to 2pt{\hfill\vrule height 17.5pt width 2pt}
\hglue -12.7pt
\vtop{\offinterlineskip
\hbox to 2cm{\lower 0.5pt \hbox{${}^{_3}\displaystyle\mathop{\bullet}^{_1}$}
\hglue -5.5pt
\vbox{  {\hrule height 0,3pt width 1,8cm}
\vskip 2.5pt {\hrule height 0,3pt width 1,7cm}\vskip 0.5pt}\hglue -20pt \lower 0.5pt
\hbox{\hglue -10pt $<$}
\hglue -2.5pt\lower 0.5pt
\hbox{$\qquad$}
}
\hbox to 0pt{\hfill \vrule height 14pt width 0pt\hglue 0pt}
\hbox to 2cm{\lower 0.5pt
\hbox{${}_{_3}\displaystyle\mathop{\bullet}_{^2}$}\hglue -2.5pt 
\vbox{  {\hrule height 0,3pt width 1,7cm}
\vskip 2.5pt {\hrule height 0,3pt width 1,8cm}\vskip 0.5pt}\hglue -20pt \lower 0.5pt
\hbox{\hglue -10pt $<$}
\hglue -2.5pt\lower 0.5pt 
\hbox{}
}  }
\hglue -3.5pt\lower 16pt\hbox to 1.5pt{\vrule height 16.5pt width 0,3pt}\hglue 2pt\lower 19pt\hbox to 1.5pt{\vrule height 22pt width 0,3pt\hfill}  }
 \hglue -17pt\lower -7pt\hbox to 1.2cm{\lower 2pt\hbox{${{}^{_3}\bullet}$}
\hglue -4pt\hrulefill\lower 2pt\hbox{\hglue -1pt$\displaystyle
\mathop{\bullet}^{_4}$}  }
}
\end{center}
Following N. Bardy [\cite{ba}, \S 9], there exists a split real Kac-Moody subalgebra  $\mathfrak{m}_{\Real}^1$ of $\mathfrak{g}_{\Real}^1$ containing $\mathfrak{t}_{\Real}$ such that $\Delta'= \Delta(\mathfrak{m}_{\Real}^1,\mathfrak{t}_{\Real})$. It follows that  $\mathfrak{g}$ is  finitely $\Delta'- $graded.

\medskip
\textbf{2)} Let $\mathfrak{m}^1:=\mathfrak{m}_{\Real}^1\otimes\Complex$ and $\mathfrak{t}:=\mathfrak{t}_{\Real}\otimes\Complex$. Denote by  $\alpha'_i:=\alpha_i/\mathfrak{t}$, $i=1,2,3,4$. Put $\alpha'\check{_1}=\alpha\check{_1}+\alpha\check{_5}$, $\alpha'\check{_2}=\alpha\check{_2}+\alpha\check{_6}$,  $\alpha'\check{_3}=\alpha\check{_3}$ and $\alpha'\check{_4}=\alpha\check{_4}$.  Let $I^1:=\{1,2,3,4\}$, then $(\mathfrak{t},\Pi'=\{\alpha'_i, \; i\in I^1\}, \Pi'^\vee=\{\alpha'\check{_i}, \; i\in I^1\})$ is a realization of $A'$ which is  symmetrizable and Lorentzian.

Let $\mathfrak{m}$ be the Kac-Moody  subalgebra of $\mathfrak{m}^1$ corresponding to the  submatrix $\bar A$ of $A'$ indexed by $\{1,2\}$. Thus 
$\bar A=\left(\begin{array}{cc} 
2 & -3  \\ 
-3 & 2  
\end{array}      \right)$ is strictly hyperbolic.  Let $\mathfrak{a}:= \Complex\alpha'\check{_1} \oplus \Complex\alpha'\check{_2}$ be  the standard Cartan subalgebra of $\mathfrak{m}$ and let $\Sigma=\Delta(\mathfrak{m},\mathfrak{a})$. For $\alpha'\in \mathfrak{t}^*$, denote by $\rho_1(\alpha')$ the restriction of $\alpha'$ to
 $\mathfrak{a}$. Put $\gamma_s =\rho_1(\alpha'_s)$,   $\gamma\check{_s}=\alpha'\check{_s}$, $s=1,2$. Then $\Pi_a=\{\gamma_1, \gamma_2\}$  is the standard root basis 
 of $\Sigma$. One can see easily  that $\rho_1(\alpha'_4)=0$ and $\rho_1(\alpha'_3)= 2(\gamma_1+\gamma_2)$ is a strictly positive imaginary root of $\Sigma$.\\
 Now  we will show that $\mathfrak{m}^1$ is finitely $\Sigma-$graded.\\
Let $(.\, , .)_1$ be the normalized invariant bilinear form on $\mathfrak{m}^1$ such that  short real roots have length 1 and long real roots have square length 2. 
Then there exists a positive rational $q$ such that the restriction of  $(.\, , .)_1$ to $\mathfrak{t}$ has the matrix $B_1$ in the basis $\Pi'\check{_{\,}}$, where :
$$B_1= q\left(\begin{array}{cccc} 
2 & -3 & -1& 0  \\ 
-3 & 2 & -1 & 0  \\ 
-1 & -1 & 1 & -1/2 \\ 
0  & 0  & -1/2 & 1  
\end{array}      \right)$$
By duality, the restriction of  $(.\, , .)_1$ to $\mathfrak{t}$ induces a nondegenerate symmetric bilinear form on   $\mathfrak{t}^*$  (see \cite{vk}; \S 2.1) such that its matrix $B'_1$ in the basis $\Pi'$, is the following :
$$B'_1=q^{-1}\left(\begin{array}{cccc} 
2 & -3 & -2& 0  \\ 
-3 & 2 & -2 & 0  \\ 
-2 & -2 & 4 & -2 \\ 
0  & 0  & -2 & 4  
\end{array}      \right)$$
Hence,  $q $   equals $2$.
 \\
Note that for $\alpha'=\sum_{i=1}^4 n_i\alpha'_i\in\Delta'^+$, we have that 
\begin{equation}\label{lalph1} 
(\alpha',\alpha')_1= n_1^2+n_2^2+2n_3^2+2n_4^2 -3n_1n_2 -2n_1n_3 -2n_2n_3 -2n_3n_4.
\end{equation}
We will show that $\rho_1(\Delta'^+)=\Sigma^+\cup\{0\}$.  Note that $\Sigma$ can be identified with  $\Delta'\cap( \Z\alpha'_1+ \Z\alpha'_2)$; hence  $\rho_1$ is injective on $\Sigma$ and $\Sigma^+\subset\rho_1(\Delta'^+)$. \\
Let $(.\, , .)_a$ be the normalized invariant bilinear form on $\mathfrak{m}$ such that  all real roots have length 2. Then the restriction of  $(.\, , .)_a$ to $\mathfrak{a}$ has the matrix $B_a$ in the basis $\Pi\check{_a}=\{\gamma\check{_1}, \gamma\check{_2}\}$, where :
$$B_a= \left(\begin{array}{cc} 
2 & -3   \\ 
-3 & 2 
\end{array}      \right)$$
Since $\bar A$ is symmetric, the nondegenerate symmetric bilinear form, on   $\mathfrak{a}^*$, induced by the restriction of  $(.\, , .)_a$ to $\mathfrak{a}$, has the same matrix $B_a$ in the basis $\Pi_a$. In particular, we have that :
$$(\rho_1(\alpha'),\rho_1(\alpha'))_a= 2[(n_1+2n_3)^2+(n_2+2n_3)^2-3(n_1+2n_3)(n_2+2n_3)], $$
since $\rho_1(\alpha')=(n_1+2n_3)\gamma_1+(n_1+2n_3)\gamma_2$.\\
Using (\ref{lalph1}), it  is not difficult to check that 
\begin{equation}\label{r1alph} 
(\rho_1(\alpha'),\rho_1(\alpha'))_a= 
2[(\alpha',\alpha')_1 -(n_3-n_4)^2 -5n_3^2-n_4^2]
\end{equation}
Suppose $n_3=0$, then, since supp$(\alpha')$ is connected, we have that  $\alpha'=n_1\alpha'_1+n_2\alpha'_2$ or  $\alpha'=\alpha'_4$. Hence $\rho_1(\alpha') = n_1\gamma_1+n_2\gamma_2\in\Sigma$ or $\rho_1(\alpha')=0$.\\
Suppose $n_3\not=0$, then, since $(\alpha',\alpha')_1\leq 2$, one can see, using  (\ref{r1alph}),  that  $$(\rho_1(\alpha'),\rho_1(\alpha'))_a <0.$$ As $\Sigma$ is hyperbolic and $\rho_1(\alpha')\in \Nat\gamma_1+\Nat\gamma_2$, we deduce that $\rho_1(\alpha')$ is a positive imaginary root of $\Sigma$ (see \cite{vk}; Prop. 5.10). It follows that $\rho_1(\Delta'^+)=\Sigma^+\cup\{0\}$. \\
 To see that   
$\mathfrak{m}^1$ is finitely $\Sigma-$graded, it suffices to prove that, for $\gamma=m_1\gamma_1+m_2\gamma_2\in\Sigma^+\cup\{0\}$, the set $\{\alpha'\in\Delta'^+, \rho_1(\alpha')=\gamma\}$ is finite. 
Note that if $\alpha'=\sum_{i=1}^4 n_i\alpha'_i\in\Delta'^+$ satisfying $\rho_1(\alpha')=\gamma$, then $n_i+2n_3=m_i$, $i=1, 2$. In particular, there are only finitely many  possibilities for $n_i$, $i=1, 2, 3$. The same argument as the one 
used in the proof of Proposition \ref{finmult} shows   also that there are only finitely many possibilities for $n_4$.

\medskip
\textbf{3)} Recall that $\mathfrak{m}\subset\mathfrak{m}^1\subset\mathfrak{g}$. The fact that $\mathfrak{g}$ is finitely $\Delta'-$graded with grading subalgebra $\mathfrak{m}^1$ 
and $\mathfrak{m}^1$ is finitely $\Sigma-$graded  implies that $\mathfrak{g}$ is finitely $\Sigma-$graded (cf. lemma \ref{trans}). 
Let $I=\{1,2,3,4,5,6\}$,  then the root basis $\Pi_a$ of $\Sigma$ is adapted to the root basis $\Pi$ of $\Delta$ and we have  $I_{re}= \{1,2, 5,6\}$ (not connected), $\Gamma_1=\{1,5\}$, $\Gamma_2=\{2,6\}$, $J=\{4\}$, $J_{re}=\emptyset$, $I'_{im}=\{3\}$ and $J^{\circ}=J=\{4\}$.  
\\
Note that, for this example,  $\mathfrak{g}(I_{re})$, which is $\Sigma-$graded,  is isomorphic to $\mathfrak{m}\times \mathfrak{m}$. 
This gradation corresponds to that of the pseudo-complex real form  of $\mathfrak{m}\times \mathfrak{m}$ (i.e. the complex Kac-Moody algebra $\mathfrak{m}$ viewed as real Lie algebra) by its restricted reduced root system. Since the pair $(I_3,J_3)=(\{3,4\},\{4\})$  is not admissible, it is not possible to bring back $J^{\circ}$ to the empty set i.e. to build a Kac-Moody algebra $\mathfrak g^J$ grading finitely $\mathfrak g$ and maximally finitely $\Sigma-$graded.

 \bigskip

\end{document}